\documentclass[11pt]{amsart}

\usepackage{amsmath,amssymb,amsthm}
\usepackage{upgreek}
\usepackage{amsaddr}
\usepackage{mathtools}
\usepackage{geometry}
\usepackage{graphicx}
\usepackage{booktabs}
\usepackage{microtype}
\usepackage{hyperref}
\usepackage{enumerate}
\usepackage{MnSymbol}
\usepackage{xcolor}

\graphicspath{{figures/}}
\hypersetup{
  colorlinks=true,
  linkcolor=blue!45!black,
  citecolor=green!35!black,
  urlcolor=blue!55!black
}

\newtheorem{theorem}{Theorem}[section]
\newtheorem{proposition}[theorem]{Proposition}
\newtheorem{lemma}[theorem]{Lemma}
\newtheorem{corollary}[theorem]{Corollary}
\theoremstyle{definition}
\newtheorem{definition}[theorem]{Definition}
\newtheorem{example}[theorem]{Example}
\newtheorem{conjecture}[theorem]{Conjecture}
\theoremstyle{remark}
\newtheorem{remark}[theorem]{Remark}

\newcommand{\R}{\mathbb{R}}
\newcommand{\RP}{\mathbb{RP}}
\newcommand{\ART}{\operatorname{ART}}
\newcommand{\CSS}{\operatorname{CSS}}
\newcommand{\CPPOS}{\operatorname{CPPOS}}
\newcommand{\AreaEvolute}{\mathrm E_{0.5}}
\newcommand{\Int}{\operatorname{int}}

\newcommand{\mesh}{\operatorname{mesh}}
\newcommand{\dd}{\,\mathrm{d}}
\newcommand{\eps}{\varepsilon}

\newcommand{\doi}[1]{\href{https://doi.org/#1}{\nolinkurl{doi:#1}}}

\title[Invariants of an Affine Reflection Transform]{Invariants of an Affine Reflection Transform}
\author{Peter Giblin$^{1}$, Stanis\l{}aw Janeczko$^{2}$, and Micha\l{} Zwierzy\'nski$^{2,\pentagram}$}

\email{pjgiblin@liverpool.ac.uk}
\email{Stanislaw.Janeczko@pw.edu.pl}
\email{Michal.Zwierzynski@pw.edu.pl}
\email{ORCID: 0000-0002-9627-1563}

\thanks{$^{\pentagram}$ Corresponding author}

\address{$^{1}$The~University of Liverpool\\ Department of Mathematical Sciences\\ Liverpool L69 7ZL, United Kingdom}

\address{$^{2}$Warsaw University of Technology\\
Faculty of Mathematics and Information Science\\
ul. Koszykowa 75\\
00-662 Warsaw, Poland}

\keywords{affine reflection transform,
centre symmetry set, envelope, Wigner caustic, cusp}

\subjclass[2020]{53A15, 58K05, 52A38, 52B11}

\begin{document}

\begin{abstract}
We introduce the~affine reflection transform (ART) of an~oval $C$ relative to an~interior point $p$ by applying the~parallel-tangent involution to chords through $p$ and taking the~envelope.  A~dual-projective approach gives a~coordinate-free singularity criterion and proves that every generic non-degenerate $\ART$ has an~odd number of ordinary cusps, at least three.  Its relation to the~centre symmetry set and the~Wigner caustic yields further cusp bounds.

We define a~discrete $\ART$ for convex polygons with parallel opposite sides.  Its combinatorial cells admit explicit rational parametrisations.  Non-degenerate cells are arcs of ellipses or hyperbolas, and never parabolas.  Under refining tangent approximations, the~discrete transform converges in the~Hausdorff metric to the~smooth one.

The~oriented area of the~smooth and polygonal transforms is non-positive and admits a~Sobolev-type interpretation.  Maximising its absolute value produces an~affine-invariant asymmetry measure and a~set-valued affine centre.  We establish boundary degeneration and vanishing results, characterise central symmetry, and show that the~maximising locus need not lie on the~centre symmetry set or Wigner caustic.  Finally, we prove universal upper bounds for the~normalised energy, strengthen them for ovals of constant width.
\end{abstract}

\maketitle

\tableofcontents

\section{Introduction}

\noindent Let $C\subset\R^2$ be an~oval, by which we mean a~smooth, strictly convex simple closed curve with positive curvature.  Every $x\in C$ has a~unique partner $\iota_C(x)\in C$ whose tangent line is parallel to $T_xC$.  The~map $\iota_C:C\to C$ is a~smooth fixed-point-free involution.

\begin{figure}[ht]
  \centering
  \includegraphics[width=.58\linewidth]{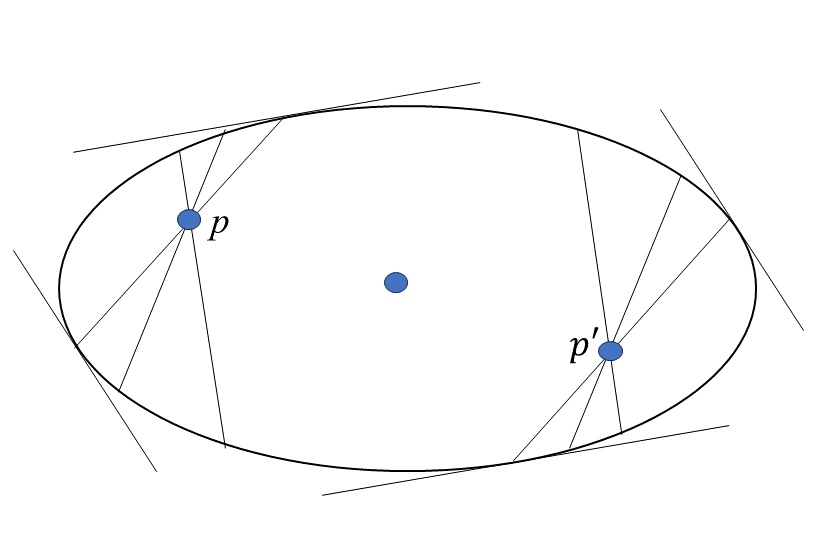}
  \caption{For a~centrally symmetric oval the~affine reflection transform
of $p$ is the~point $p'=2o-p$}
  \label{fig:central}
\end{figure}

The~oval $C$ therefore induces a~natural involution on its space of secant lines, extending continuously to tangent lines: a~line meeting $C$ at two distinct points $x$ and $y$ is sent to the~line through $\iota_C(x)$ and $\iota_C(y)$, while the~tangent line $T_xC$, regarded as meeting $C$ at two coincident points, is sent to $T_{\iota_C(x)}C$.  In particular, applying this involution to the~pencil of lines through an~interior point $p$ produces a~one-parameter family of lines whose envelope we study.  This envelope is called the~\emph{affine reflection transform} of $p$ and is denoted by $\ART_C(p)$.  The~word ``reflection'' refers to the~involution induced by $\iota_C$ and not to a~Euclidean reflection in a~line -- point reflections of planar convex bodies arise in a~different convex-geometric setting in \cite{SchneiderReflections}.  As we shall see, this involution has natural connections with classical geometric constructions such as the~centre symmetry set and the~Wigner caustic of the~curve.

If $C$ is centrally symmetric with centre $o$, then $\iota_C(x)=2o-x$.  Consequently, every transformed chord passes through $p'=2o-p$, and $\ART_C(p)$ reduces to the~single point $p'$ (see Figure~\ref{fig:central}).

For a~general oval the~transformed chords need not be concurrent.  The~typical envelope is a~singular closed front. Examples are shown in Figure~\ref{fig:general-art}.  The~centre symmetry set $\CSS(C)$ is the~envelope of the~affine chords $\overline{x\,\iota_C(x)}$ \cite{GiblinHoltom,GiblinZakalyukin}.  Its singularity theory and global geometry, together with closely related
chord, affine-equidistant, and Wigner-caustic constructions, have been studied in
\cite{DomitrzManoelRios,DomitrzRios,DomitrzZwier2022,
DomitrzZwierSingular,GiblinZ1,Janeczko,MillerZwier2026}.
Related affine-Lagrangian centre-chord constructions appear in
\cite{CraizerDomitrzRios}. For broader background on singularity theory, including caustics and
wave fronts, and on affine hypersurface geometry, see
\cite{ArnoldGuseinZadeVarchenko,MatherYau} and \cite{ChengYau},
respectively.  Each affine chord is fixed by the~reflection of chords, so the~tangent lines to $\CSS(C)$ are the~fixed lines of the~construction.

\begin{figure}[ht]
  \centering
  \includegraphics[width=\linewidth]{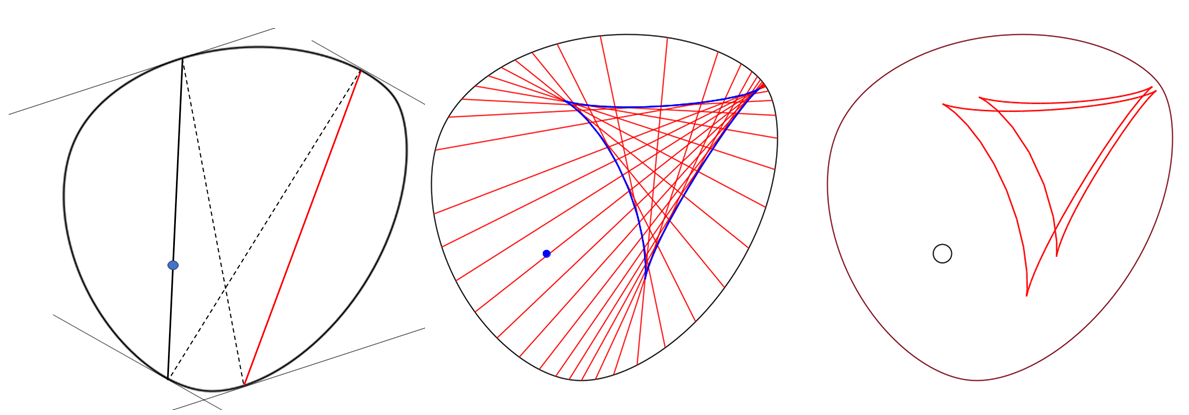}
  \caption{Left: an~affine chord and a~pair of corresponding chords.
Centre: reflected chords through a~marked basepoint and their three-cusped envelope.  Right: the~transform of a~family of tangent lines to a~small circle}
  \label{fig:general-art}
\end{figure}

The~principal results of this paper are as follows.

\begin{enumerate}[(i)]
\item The~chord transformation is a~smooth involutive diffeomorphism of the~open Möbius band of secant lines (see Theorem~\ref{thm:line-map}).
\item A~point of the~envelope is singular precisely when the~corresponding curve in the~dual projective plane has an~inflection. This gives the~determinant test in Theorem~\ref{thm:cusp-test}.
\item A~generic $\ART_C(p)$ has an~odd number of cusps, at least three (see Theorem~\ref{thm:three-cusps}).
\item The~fixed affine chords through $p$ are common tangent lines of $\ART_C(p)$ and $\CSS(C)$.  Their number, and the~midpoint count controlled by the~Wigner caustic, give lower bounds for the~number of $\ART$ cusps (see Theorem~\ref{thm:fixed-chord-cusp-bound}).
\item A~canonical discrete model for convex polygons with parallel opposite sides has the~correct centre symmetry set and Wigner caustic. Its pieces admit exact rational parametrisations and a~complete affine classification: a~non-degenerate piece is elliptic or hyperbolic, never parabolic. Adjacent-side and opposite-side cells always give points, and an~exact incidence criterion detects any further point degeneration.  A~projective involution governs the~opposite-side points.  The~completed transform remains inside the~original polygon and converges to the~smooth $\ART$ -- see Theorems \ref{thm:discrete-properties}, \ref{thm:exact-cellwise}, \ref{thm:hausdorff}, and Propositions \ref{prop:cellwise-affine-type}, \ref{prop:adjacent-pencil}, and \ref{prop:opposite-pencil}.
\item The~oriented area of an~ART is non-positive in both the~smooth and completed polygonal settings.  The~resulting area energy defines an~affine invariant and a~compact affine-covariant maximising locus. For ovals the~invariant detects central symmetry and has an~exact interior critical-point equation.  At a~smooth boundary point the~transform collapses to the~parallel-tangent partner, with a~first-order rescaled profile and quadratic energy decay.  For polygons the~energy vanishes at the~boundary, zero energy characterises central symmetry, and the~maximising locus can be non-unique -- see Theorems \ref{thm:smooth-boundary-degeneration}, \ref{thm:smooth-energy-properties}, \ref{thm:polygonal-art-area}, \ref{thm:boundary-degeneration}, and Propositions \ref{prop:smooth-art-area}, \ref{prop:smooth-energy-critical}.
\item The~normalised $\ART$ energy satisfies the~universal estimate
$\delta_{\ART}\leq\frac{2\uppi}{3\sqrt3}$
in both the~smooth and polygonal settings. For ovals of constant width
we obtain the~stronger bound
$\delta_{\ART}<\frac{\uppi}{3(\uppi-\sqrt3)}$.
An explicit degenerating family of $\CPPOS$ hexagons has normalised
energy tending to $1$, which motivates the~conjectured sharp universal
bound $\delta_{\ART}\leq1$ (see
Section~\ref{sec:extremal-art-energy}).
\end{enumerate}

\section{The~affine reflection map on the~space of lines}

\subsection{Secant lines and the~antipodal involution}

Let $\mathcal L=(\RP^2)^*$ be the~real projective plane of unoriented affine lines.  A~line
$ax+by+c=0$
is represented by its homogeneous coordinates $[a:b:c]\in\mathcal L$. Let
$$
  \mathcal S_C
  =
  \{\ell\in\mathcal L:\ell\cap C\text{ consists of two distinct points}\}.
$$
The~set $\mathcal S_C$ is the~open Möbius band bounded by the~dual oval $C^*$.

Because $C$ is strictly convex, the~map
$$
  J_C:
  \bigl((C\times C)\setminus\Delta\bigr)/\mathfrak S_2
  \rightarrow \mathcal S_C,
  \qquad
  \{x,y\}\mapsto\overline{xy},
$$
is a~diffeomorphism. Here
$\Delta=\{(x,x):x\in C\}$
is the~diagonal, and $\mathfrak S_2=\{\operatorname{id},(12)\}$ is the~symmetric group on two letters.  It acts on $(C\times C)\setminus\Delta$ by interchanging the~two entries: $(12)\cdot(x,y)=(y,x)$.  Passing to the~quotient therefore forgets the~order of the~endpoints, so a~point of the~quotient is an~unordered pair $\{x,y\}$ with $x\neq y$.  Strict convexity implies that every secant line meets $C$ in exactly one such pair.  Conversely, the~join of every unordered pair is a~secant.  These operations are smooth in both directions because the~intersections are transverse away from $\Delta$.  This explains both the~bijectivity and the~smoothness of $J_C$ -- the~quotient is the~open Möbius band of secants.

\begin{definition}
The~\emph{affine reflection map} associated with $C$ is
$$
  \Phi_C
  =
  J_C\circ(\iota_C\times\iota_C)\circ J_C^{-1}
  :
  \mathcal S_C\rightarrow\mathcal S_C.
$$
\end{definition}

\begin{theorem}\label{thm:line-map}
The~map $\Phi_C$ is a~smooth involutive diffeomorphism.  Its fixed-point set is
$$
  \mathcal F_C
  =
  \{\overline{x\,\iota_C(x)}:x\in C\},
$$
the~curve of affine chords.  Moreover, for every invertible affine map $A:\R^2\to\R^2$,
$$
  \Phi_{A(C)}(A\ell)=A\bigl(\Phi_C(\ell)\bigr).
$$
\end{theorem}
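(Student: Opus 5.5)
The proof works on the quotient $M=\bigl((C\times C)\setminus\Delta\bigr)/\mathfrak S_2$ and transports everything to $\mathcal S_C$ through the diffeomorphism $J_C$, which was established just before the definition.

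\emph{Smoothness and involutivity.} First I check that $\iota_C$ is a smooth fixed-point-free involution. Parametrise $C$ by the angle $\theta$ of its tangent direction; this is a diffeomorphism $S^1\to C$ because the curvature is positive. In this parameter $\iota_C$ is $\theta\mapsto\theta+\uppi$, so it is smooth, satisfies $\iota_C^2=\mathrm{id}$, and has no fixed points. The product $\iota_C\times\iota_C$ is then a smooth involution of $C\times C$. It preserves $\Delta$, hence also $(C\times C)\setminus\Delta$, and it commutes with the swap $(12)$. It therefore descends to a smooth involution $\bar\iota$ of $M$. Smoothness passes to the quotient because $\mathfrak S_2$ acts freely on the complement of $\Delta$, so the projection is a smooth double cover. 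Then $\Phi_C=J_C\circ\bar\iota\circ J_C^{-1}$, and $\Phi_C\circ\Phi_C=J_C\bar\iota^{\,2}J_C^{-1}=\mathrm{id}$. So $\Phi_C$ is smooth and is its own inverse, which makes it a diffeomorphism.

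\emph{Fixed points.} A line $\ell=\overline{xy}$ is fixed exactly when $\{\iota_C(x),\iota_C(y)\}=\{x,y\}$ as unordered pairs. The ordered case $\iota_C(x)=x$ cannot occur, since $\iota_C$ has no fixed points. The only possibility is the swapped case $\iota_C(x)=y$, which also gives $\iota_C(y)=x$. So $\ell$ is fixed precisely when $\ell=\overline{x\,\iota_C(x)}$, and these are exactly the affine chords. This identifies the fixed set with $\mathcal F_C$. As $x$ runs over $C$, each chord is counted twice, by $x$ and by $\iota_C(x)$, so $\mathcal F_C$ is an embedded circle. This last remark is not needed for the statement.

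\emph{Affine equivariance.} Let $A$ be an invertible affine map. Then $A(C)$ is again an oval, $A$ maps lines to lines, and $A(\overline{xy})=\overline{AxAy}$. The key point is that $\iota_{A(C)}\circ A=A\circ\iota_C$. This holds because the linear part of $A$ sends parallel lines to parallel lines, and $A$ carries $T_xC$ to $T_{Ax}A(C)$. So the tangent at $A\iota_C(x)$ is parallel to the tangent at $Ax$, and since the partner is unique, $A\iota_C(x)=\iota_{A(C)}(Ax)$. For $\ell=\overline{xy}$ this gives
\[
\Phi_{A(C)}(A\ell)=\overline{\iota_{A(C)}(Ax)\,\iota_{A(C)}(Ay)}=\overline{A\iota_C(x)\,A\iota_C(y)}=A\bigl(\Phi_C(\ell)\bigr).
\]

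The only step that needs some care is smoothness, specifically the smoothness of $\iota_C$ and its descent to the quotient. This is handled by the tangent-angle (support-function) parametrisation together with the freeness of the $\mathfrak S_2$-action. Everything else is formal.
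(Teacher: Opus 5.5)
Your proposal is correct and follows the paper's proof. Involutivity comes from $\iota_C^2=\operatorname{id}_C$ transported through $J_C$, the fixed set from comparing unordered pairs using that $\iota_C$ has no fixed points, and covariance from $\iota_{A(C)}\circ A=A\circ\iota_C$. You also spell out the smoothness step that the paper leaves implicit: the tangent-angle parametrisation and the descent through the free $\mathfrak S_2$-quotient.
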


\begin{proof}
The~first statement follows immediately from the~definition and from $\iota_C^2=\operatorname{id}_C$.  If $\Phi_C(\overline{xy}) =\overline{xy}$, then the~unordered pair $\{\iota_C(x),\iota_C(y)\}$ equals $\{x,y\}$.  Since $\iota_C$ has no fixed points, necessarily $y=\iota_C(x)$.  This proves the~description of $\mathcal F_C$.

An~affine map preserves parallelism and sends the~unique tangent-parallel partner of $x$ to the~unique tangent-parallel partner of $A(x)$. Consequently, $\iota_{A(C)}\circ A=A\circ\iota_C$ on $C$, which gives the~covariance formula.
\end{proof}

The~projective dual of $\mathcal F_C$ is precisely $\CSS(C)$.  Thus the~often stated invariance of the~CSS is more accurately expressed as follows: every tangent line to $\CSS(C)$ is fixed by $\Phi_C$.

\begin{proposition}\label{prop:fixed-count}
Every $p\in\Int(C)$ lies on at least one affine chord.  If all intersections of the~pencil through $p$ with $\mathcal F_C$ are transverse, their number is odd.
\end{proposition}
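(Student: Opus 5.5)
Parametrise the affine chords by a half-turn of directions and count them with a degree argument. Fix $p\in\Int(C)$. For each unit direction $\theta\in[0,\uppi]$ let $x(\theta)\in C$ be the point whose outward normal is $(\cos\theta,\sin\theta)$. Then $\iota_C(x(\theta))=x(\theta+\uppi)$, and the affine chord $\ell(\theta)=\overline{x(\theta)\,x(\theta+\uppi)}$ satisfies $\ell(\theta+\uppi)=\ell(\theta)$. So $\mathcal F_C$ is the image of the circle $\R/\uppi\mathbb Z$. Consider the signed quantity
$$
  f(\theta)=\det\bigl(x(\theta)-p,\;x(\theta+\uppi)-p\bigr),
$$
which vanishes exactly when $p$ lies on $\ell(\theta)$.

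The key observation is that $f(\theta+\uppi)=-f(\theta)$, because swapping the two columns changes the sign. Hence $f$ is a continuous antiperiodic function on $[0,\uppi]$. Either $f(0)=0$, or $f(0)$ and $f(\uppi)$ have opposite signs. In both cases the intermediate value theorem gives $\theta_0\in[0,\uppi)$ with $f(\theta_0)=0$. This proves that $p$ lies on at least one affine chord.

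For the parity statement I would first check that transversality of the pencil at $p$ with $\mathcal F_C$ corresponds to simple zeros of $f$. The line $\ell(\theta)$ moves smoothly in $\mathcal L$, and the pencil through $p$ is a projective line $p^*$ in $\mathcal L$. The curve $\theta\mapsto\ell(\theta)$ meets $p^*$ transversally at $\theta_0$ exactly when the derivative of the incidence function is nonzero. Up to a nonvanishing normalising factor, that derivative is $f'(\theta_0)$, so transverse intersections are simple zeros. We also need that distinct $\theta\in[0,\uppi)$ give distinct lines, so that zeros correspond bijectively to intersection points. This holds because an affine chord determines its unordered endpoint pair $\{x,\iota_C(x)\}$, and hence $\theta$ modulo $\uppi$.

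With only finitely many simple zeros in $[0,\uppi)$, $f$ changes sign at each of them. The total sign change from $\theta$ to $\theta+\uppi$ is a flip, since $f(\theta+\uppi)=-f(\theta)$. Choose a starting point $\theta_1$ with $f(\theta_1)\neq0$ and count the zeros in $(\theta_1,\theta_1+\uppi)$. Each zero flips the sign and the net effect is one flip, so the number of zeros is odd. Equivalently, the closed curve $\mathcal F_C$ is the image of a loop that is non-contractible in the Möbius band, being the core of $\mathcal S_C$. Any projective line meets such a loop an odd number of times mod $2$, because both represent the nonzero class in $H_1(\RP^2;\mathbb Z/2)$.

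The main obstacle is the identification of transversality with simplicity of zeros. This requires that $\ell(\theta)$ be an immersed curve in $\mathcal L$, meaning the affine chords do not stall, and that the normalising factor relating $f$ to the incidence function in homogeneous coordinates does not vanish. The chord has distinct endpoints, so its homogeneous coordinates $x(\theta)\times x(\theta+\uppi)$ (in lifted form) are nonzero. The incidence with $p$ is then exactly $f$ up to the sign conventions of the determinant, which should settle it.
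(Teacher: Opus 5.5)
Your argument is correct, and it takes a genuinely different route from the paper.

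The paper argues homologically. It identifies $H_1(\mathcal S_C;\mathbb Z_2)\cong\mathbb Z_2$ and asserts that both $\Lambda_p$ and $\mathcal F_C$ represent the core class; for $\mathcal F_C$ this is done by deforming $C$ through ovals to a circle, where $\mathcal F_C$ is itself a pencil. It then uses the fact that the core of a Möbius band has mod-$2$ self-intersection one.

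You instead write down an explicit incidence function. With the cross-product lift $\boldsymbol\ell(\theta)=\widehat{x(\theta)}\times\widehat{x(\theta+\uppi)}$ one has exactly $f(\theta)=\det(\widehat p,\widehat{x(\theta)},\widehat{x(\theta+\uppi)})=\widehat p\cdot\boldsymbol\ell(\theta)$. So the normalising factor you worry about is identically $1$, and transversality of the parametrised curve to $\Lambda_p$ is literally the condition $f'(\theta_0)\neq0$.

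The immersion issue you flag is also harmless. The derivative $\frac{\dd}{\dd\theta}\bigl(x(\theta+\uppi)-x(\theta)\bigr)$ is a non-zero multiple of the tangent direction at $x(\theta)$, which is transverse to the chord. Hence the chord direction turns with non-zero speed, and $\mathcal F_C$ is an embedded curve.

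What your route buys is a self-contained, elementary proof. The single identity $f(\theta+\uppi)=-f(\theta)$ replaces both the deformation-to-a-circle step and the intersection-theoretic input. It is the same anti-periodicity mechanism the paper itself uses later, for $D(t)$ in Theorem~\ref{thm:three-cusps} and for the radial asymmetry function $a_p$.

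Your closing homological remark is essentially the paper's proof. The anti-periodic lift $\boldsymbol\ell(\theta+\uppi)=-\boldsymbol\ell(\theta)$ is exactly what justifies the non-contractibility of $\mathcal F_C$ that you assert there without proof.

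What the paper's route buys is generality. It shows that $\mathcal F_C$ meets any transverse loop in the core class an odd number of times, for instance $\Phi_C(\Lambda_p)$ or any other embedded one-sided circle in the secant band. Your sign count relies on $\Lambda_p$ being a projective line, so that incidence is a single linear functional of the lift.
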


\begin{proof}
The~pencil of lines through $p$ represents the~core class of the~secant Möbius band.  The~same is true for $\mathcal F_C$: this is immediate for a~circle, where $\mathcal F_C$ is the~pencil through its centre, and the~class cannot change under a~deformation of $C$ through ovals. Thus both curves represent the~non-zero class in $H_1(\mathcal S_C;\mathbb Z_2)\cong\mathbb Z_2$.

For completeness, the~mod-$2$ self-intersection of the~core class of a~Möbius band equals one.  Indeed, the~normal line bundle of an~embedded core circle is non-orientable.  A~generic section of this line bundle therefore has an~odd number of zeros modulo $2$, and these zeros are precisely the~intersections of the~core with a~generic small push-off.  Since the~mod-$2$ intersection number depends only on the~homology classes, any two transverse representatives of the~core class meet an~odd number of times.  Applied to the~pencil and $\mathcal F_C$, this proves both the~existence and the~parity assertion.
\end{proof}

\subsection{Definition of the~transform and elementary properties}

Let
$$
  \Lambda_p=\{\ell\in\mathcal L:p\in\ell\}\cong\RP^1
$$
be the~pencil through $p$.  Since $p$ is an~interior point, one has $\Lambda_p\subset\mathcal S_C$.

\begin{definition}\label{def:smooth-art}
The~\emph{dual affine reflection curve} is
$\Gamma^*_{C,p}=\Phi_C(\Lambda_p)\subset\mathcal L$.
The~\emph{affine reflection transform} $\ART_C(p)$ is the~projective dual of $\Gamma^*_{C,p}$, equivalently the~envelope of the~lines represented by $\Gamma^*_{C,p}$.
\end{definition}

\begin{proposition}\label{prop:direction-containment}
As a~line of $\Lambda_p$ turns counterclockwise, its image under $\Phi_C$ also turns strictly counterclockwise.  Furthermore,
$\ART_C(p)\subset\overline{\Int(C)}$.
In particular, $\ART_C(p)$ is compact and has no affine asymptote.
\end{proposition}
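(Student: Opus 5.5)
Parametrise $C$ by the angle $\theta$ of its tangent direction, so that $x(\theta)$ has tangent direction $(\cos\theta,\sin\theta)$ and $\iota_C(x(\theta))=x(\theta+\uppi)$. A line of $\Lambda_p$ is determined by its direction angle $\varphi\in\R/\uppi\mathbb Z$. It meets $C$ at two points $x(\alpha(\varphi))$ and $x(\beta(\varphi))$, and as $\varphi$ increases counterclockwise both endpoint parameters $\alpha,\beta$ increase strictly, because each endpoint slides counterclockwise along the oval. The image line $\Phi_C(\ell_\varphi)$ is the chord joining $x(\alpha+\uppi)$ and $x(\beta+\uppi)$, whose endpoints therefore also move strictly counterclockwise along $C$. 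I will show that a chord whose two endpoints both move strictly counterclockwise on a strictly convex curve turns strictly counterclockwise. To see this, write the chord direction as $d=y-x$ with $x=x(a)$, $y=x(b)$, so that
$$\det(d,\dot d)=\det(y-x,\ \dot b\,y'-\dot a\,x').$$
Strict convexity gives $\det(y-x,y')>0$ and $\det(y-x,x')>0$ with the appropriate orientation (the chord from $x$ to $y$ sees $C$ on its left). Since $\dot a,\dot b>0$, the expression has a definite sign. Concretely, $\det(y-x,y')>0$ and $-\det(y-x,x')<0$ would give opposing signs, so I need to fix the orientation carefully. The cleaner route is to decompose the motion: move one endpoint at a time. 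With $x$ fixed, the chord rotates about $x$, and moving $y$ counterclockwise along $C$ rotates the line counterclockwise about $x$, since the polar angle of $C$ seen from a boundary point is monotone by strict convexity. Symmetrically, with $y$ fixed, moving $x$ counterclockwise also rotates the line counterclockwise, viewed as an unoriented line. Both contributions to $\dot\psi$, the derivative of the direction angle of the unoriented line, are therefore positive. I expect this sign bookkeeping, in which both endpoint motions turn the unoriented line the same way, to be the main point to get right. I would verify it via the formula $\dot\psi=\bigl(\dot b\det(y-x,y')+\dot a\det(x-y,x')\bigr)/|y-x|^2$, where both determinants are positive because $C$ lies to one side of each tangent and the chord points into the oval from either end.

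For the containment claim, I note that a point of the envelope $\ART_C(p)$ is a limit of intersections of nearby lines $\Phi_C(\ell_\varphi)\cap\Phi_C(\ell_{\varphi+h})$. It therefore suffices to show that any two distinct image chords meet inside $\overline{\Int(C)}$. Two chords $\overline{x(\alpha+\uppi)x(\beta+\uppi)}$ with $\varphi$ and $\varphi+h$ for small $h>0$ have endpoints that interlace on $C$: both endpoints advance by a small amount, and the cyclic order is $a<a'<b<b'$. Interlacing endpoints on a convex curve imply that the two segments cross, so the intersection lies in the convex hull, which is $\overline{\Int(C)}$. Passing to the limit $h\to0$ gives the envelope point in the closed region. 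Alternatively, the envelope point lies on the secant segment between the characteristic limits, and a convex-combination argument gives the same conclusion. Interlacing holds because $\alpha<\beta<\alpha+\uppi$ cyclically together with monotonicity, and $\varphi\mapsto\alpha$ is continuous.

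Compactness then follows, since $\ART_C(p)$ is the image of the compact circle $\Lambda_p$ under the continuous envelope map and lies in the bounded set $\overline{\Int(C)}$. It has no affine asymptote, since an asymptote would require points escaping to infinity. This boundedness also shows that the dual curve $\Gamma^*_{C,p}$ never passes through a line at infinity.
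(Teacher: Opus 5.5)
Your proposal is correct and follows essentially the same route as the paper. Strict counterclockwise turning comes from the derivative of $\arg(y-x)$ being $[y-x,\dot y-\dot x]/\lVert y-x\rVert^2>0$, because $[y-x,\dot y]>0$ and $[y-x,\dot x]<0$ (your final formula with $\det(x-y,x')>0$ is the right one; discard your first guess $\det(y-x,x')>0$), and containment comes from the alternation of endpoints of nearby transformed chords, which therefore cross inside $C$, with the envelope point as the limit of these intersections.
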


\begin{proof}
Let $x(s)$ and $y(s)$ be the~endpoints of the~transformed chord, both moving in the~positive direction along $C$, and set $v=y-x$.  On any interval on which $x$ precedes $y$ in the~positive orientation, strict convexity gives
$[v,\dot y]>0$ and
$[v,\dot x]<0$.
Hence
$$
  \frac{\dd}{\dd s}\arg(v)
  =
  \frac{[v,\dot y-\dot x]}{\lVert v\rVert^2}>0.
$$

Two sufficiently close lines through $p$ have alternating endpoints on $C$.  Since $\iota_C$ preserves the~cyclic order, the~endpoints of the~two transformed chords also alternate.  Chords of a~convex body with alternating endpoints intersect in its interior.  The~point of the~envelope is the~limit of these intersections, so it belongs to $\overline{\Int(C)}$.
\end{proof}

\section{Projective duality and singularities}

\subsection{The~envelope formula}

Let $t\mapsto\ell(t)=[a(t):b(t):c(t)]$ be a~regular local parametrisation of a~curve in $\mathcal L$, and choose a~non-vanishing homogeneous lift
$$
  \boldsymbol\ell(t)=(a(t),b(t),c(t))\in\R^3.
$$
The~corresponding envelope point has homogeneous coordinates
\begin{equation}\label{eq:dual-envelope}
  \boldsymbol X(t)
  =
  \boldsymbol\ell(t)\times\boldsymbol\ell'(t).
\end{equation}
Indeed, $\boldsymbol X$ lies on both the~line $\boldsymbol\ell$ and its first-order variation $\boldsymbol\ell'$.

The following standard criterion is a direct consequence of projective duality: inflection points of a~plane projective curve correspond to cusps of its dual curve (see, for example, \cite[Section~1.1]{OvsienkoTabachnikov}).

\begin{theorem}\label{thm:cusp-test}
Assume that $\boldsymbol\ell$ and $\boldsymbol\ell'$ are linearly independent. The~envelope is singular at $t=t_0$ if and only if
\begin{equation}\label{eq:det-cusp}
\det\bigl(\boldsymbol\ell,
\boldsymbol\ell',
\boldsymbol\ell''\bigr)(t_0)=0.
\end{equation}
It has an~ordinary semicubical cusp if, in addition,
\begin{equation}\label{eq:ordinary-cusp}
\det\bigl(\boldsymbol\ell,
\boldsymbol\ell',
\boldsymbol\ell'''\bigr)(t_0)\neq0.
\end{equation}
\end{theorem}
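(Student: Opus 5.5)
We work directly with the homogeneous lift $\boldsymbol X=\boldsymbol\ell\times\boldsymbol\ell'$ from \eqref{eq:dual-envelope} and differentiate it. Since $\boldsymbol\ell'\times\boldsymbol\ell'=0$,
$$
  \boldsymbol X'=\boldsymbol\ell\times\boldsymbol\ell'',\qquad
  \boldsymbol X''=\boldsymbol\ell'\times\boldsymbol\ell''+\boldsymbol\ell\times\boldsymbol\ell'''.
$$
The hypothesis that $\boldsymbol\ell,\boldsymbol\ell'$ are independent gives $\boldsymbol X\neq0$, so $[\boldsymbol X]$ is a well-defined point of $\RP^2$. The projective curve $t\mapsto[\boldsymbol X(t)]$ is singular at $t_0$ exactly when $\boldsymbol X'(t_0)$ is parallel to $\boldsymbol X(t_0)$, i.e.\ when $\boldsymbol X\times\boldsymbol X'=0$. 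This condition is independent of the choice of lift: rescaling $\boldsymbol X$ by $\lambda(t)$ adds a multiple of $\boldsymbol X$ to $\boldsymbol X'$. Since the envelope lies in the affine chart (Proposition~\ref{prop:direction-containment}), affine and projective regularity coincide.

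First, compute $\boldsymbol X\times\boldsymbol X'$ with the vector identity $(\mathbf a\times\mathbf b)\times(\mathbf a\times\mathbf c)=\det(\mathbf a,\mathbf b,\mathbf c)\,\mathbf a$:
$$
  \boldsymbol X\times\boldsymbol X'=\det(\boldsymbol\ell,\boldsymbol\ell',\boldsymbol\ell'')\,\boldsymbol\ell .
$$
Because $\boldsymbol\ell\neq0$, this vanishes iff $\det(\boldsymbol\ell,\boldsymbol\ell',\boldsymbol\ell'')(t_0)=0$, which proves \eqref{eq:det-cusp}. Geometrically this is the inflection condition on the dual curve, matching the cited duality principle.

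Second, for the cusp type, set $D(t)=\det(\boldsymbol\ell,\boldsymbol\ell',\boldsymbol\ell'')$, so that $D(t_0)=0$ and $D'=\det(\boldsymbol\ell,\boldsymbol\ell',\boldsymbol\ell''')$. Pass to an affine chart by normalising $\boldsymbol X$ so its third coordinate is $1$ near $t_0$, and write $x(t)$ for the affine point. An ordinary semicubical cusp means $x'(t_0)=0$ and $x''(t_0),x'''(t_0)$ are linearly independent. Projectively this is equivalent to $\boldsymbol X,\boldsymbol X'',\boldsymbol X'''$ being independent at $t_0$, given that $\boldsymbol X'\in\operatorname{span}\boldsymbol X$ there; the lift can be chosen so that $\boldsymbol X'(t_0)=0$ exactly, since any rescaling changes $\boldsymbol X''$ and $\boldsymbol X'''$ only by lower-order terms in the span.

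Independence then reduces to showing
$$
  \det(\boldsymbol X,\boldsymbol X'',\boldsymbol X''')(t_0)=c\cdot D'(t_0)^{k}
$$
for some nonzero factor $c$. To organise the computation, use the adapted basis $\boldsymbol\ell,\boldsymbol\ell',\boldsymbol\ell''$, which is a basis away from $t_0$ and degenerate at $t_0$. I find it simpler to work in the frame $\boldsymbol\ell,\boldsymbol\ell',\mathbf n$ with $\mathbf n$ transverse. Writing $\boldsymbol\ell''=\alpha\boldsymbol\ell+\beta\boldsymbol\ell'+\gamma\mathbf n$, we have $D=\gamma\det(\boldsymbol\ell,\boldsymbol\ell',\mathbf n)$, so $\gamma(t_0)=0$ and $\gamma'(t_0)\neq0$ is the hypothesis \eqref{eq:ordinary-cusp}.

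From there, $\boldsymbol X'=\gamma\,\boldsymbol\ell\times\mathbf n$, which vanishes to first order at $t_0$. Hence $\boldsymbol X''(t_0)=\gamma'(t_0)\,\boldsymbol\ell\times\mathbf n$, and $\boldsymbol X'''(t_0)$ contains $\gamma'(t_0)$ times a derivative of $\boldsymbol\ell\times\mathbf n$. Its component in the direction $\boldsymbol\ell'\times\mathbf n$ is $\gamma'(t_0)$ times a nonzero term. The vectors $\boldsymbol\ell\times\boldsymbol\ell'$, $\boldsymbol\ell\times\mathbf n$ and $\boldsymbol\ell'\times\mathbf n$ form a basis, so the determinant is a nonzero multiple of $\gamma'(t_0)^2$, and $\gamma'(t_0)\neq0$ gives the ordinary cusp.

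The main obstacle is this last bookkeeping step: choosing the frame and lift cleanly, and verifying that the third-derivative term supplies the missing direction $\boldsymbol\ell'\times\mathbf n$ with a coefficient that is nonzero rather than accidentally cancelled.
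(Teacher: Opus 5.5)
Your proof is correct up to one harmless slip. It follows a different route from the paper.

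\textbf{Comparison with the paper.} The paper gives no computation. It treats the theorem as an instance of the classical duality principle that inflections of $t\mapsto[\boldsymbol\ell(t)]$ in $\mathcal L$ correspond to cusps of its dual. It reads \eqref{eq:det-cusp} as the inflection condition and \eqref{eq:ordinary-cusp} as the ordinary-inflection condition, and leaves the rest to the reference.

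You instead verify the principle directly on the cross-product lift. Your identity $\boldsymbol X\times\boldsymbol X'=\det(\boldsymbol\ell,\boldsymbol\ell',\boldsymbol\ell'')\,\boldsymbol\ell$ settles the singularity criterion in one line. Your frame computation, carried through for the lift $\boldsymbol X=\boldsymbol\ell\times\boldsymbol\ell'$, gives exactly
\[
  \det(\boldsymbol X,\boldsymbol X'',\boldsymbol X''')(t_0)
  =2\det(\boldsymbol\ell,\boldsymbol\ell',\boldsymbol\ell''')(t_0)^2 ,
\]
so your constants are $c=2$ and $k=2$. This yields more than the statement claims: the ordinary-cusp condition is also necessary. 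The identity is also the projective counterpart of $[X'',X''']=2(\rho_p')^2$ in Proposition~\ref{prop:support-envelope}. The paper's citation is shorter and more conceptual, but it leaves the link between the determinants and (ordinary) inflections implicit.

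\textbf{Repairs.}

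\emph{The formula for $\boldsymbol X'$.} With $\boldsymbol X=\boldsymbol\ell\times\boldsymbol\ell'$ one actually has $\boldsymbol X'=\beta\,\boldsymbol X+\gamma\,\boldsymbol\ell\times\mathbf n$, not $\gamma\,\boldsymbol\ell\times\mathbf n$. Your formula holds for the rescaled lift $e^{-\int_{t_0}^t\beta}\,\boldsymbol\ell\times\boldsymbol\ell'$, up to a factor equal to $1$ at $t_0$. Alternatively, keep the original lift. Then the $\beta$-terms add only multiples of $\boldsymbol X(t_0)$ to $\boldsymbol X''(t_0)$, and only vectors of $\operatorname{span}(\boldsymbol X(t_0),\boldsymbol X''(t_0))$ to $\boldsymbol X'''(t_0)$.

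\emph{The coefficient you flagged as the main obstacle.} It is exactly $2\gamma'(t_0)$. Take $\mathbf w=\boldsymbol\ell\times\mathbf n$ with $\mathbf n$ constant, so $\mathbf w'=\boldsymbol\ell'\times\mathbf n$. Then $(\gamma\mathbf w)''=\gamma''\mathbf w+2\gamma'\mathbf w'+\gamma\mathbf w''$. Hence no cancellation can occur.

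\emph{The appeal to Proposition~\ref{prop:direction-containment}.} Drop it. The criterion is also applied to other line families, for example the CSS family in Theorem~\ref{thm:fixed-local}. Regularity of $[\boldsymbol X]$ is chart-independent, so simply normalise by any coordinate of $\boldsymbol X(t_0)$ that is non-zero.
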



For comparison, suppose that the~lines are written in an~affine chart as
$$
  y=\alpha(t)x+\beta(t),\qquad \alpha'(t)\neq0.
$$
The~envelope is
$$
  x(t)=-\frac{\beta'(t)}{\alpha'(t)},
  \qquad
  y(t)=\beta(t)-\alpha(t)\frac{\beta'(t)}{\alpha'(t)},
$$
and \eqref{eq:det-cusp} becomes
$\alpha'\beta''-\alpha''\beta'=0$.
This direct formula replaces the~more complicated component calculation based on differentiating the~involution $\Phi_C^2=\operatorname{id}$.

\subsection{A~local calculation at a~fixed chord}

Suppose that an~affine chord through the~basepoint is the~$y$-axis. After an~affine change of coordinates its two endpoints have local parametrisations
$$
  A(t)=(t,-a+f(t)),
  \qquad
  B(u)=(u,b+g(u)),
  \qquad a,b>0,
$$
where
$$
  f(t)=f_2t^2+f_3t^3+f_4t^4+\cdots,
  \qquad
  g(u)=g_2u^2+g_3u^3+g_4u^4+\cdots.
$$
The~coefficients $f_2$ and $g_2$ are non-zero and have opposite signs. The~local configuration is shown in Figure~\ref{fig:local}.

\begin{figure}[ht]
  \centering
  \includegraphics[width=.46\linewidth]{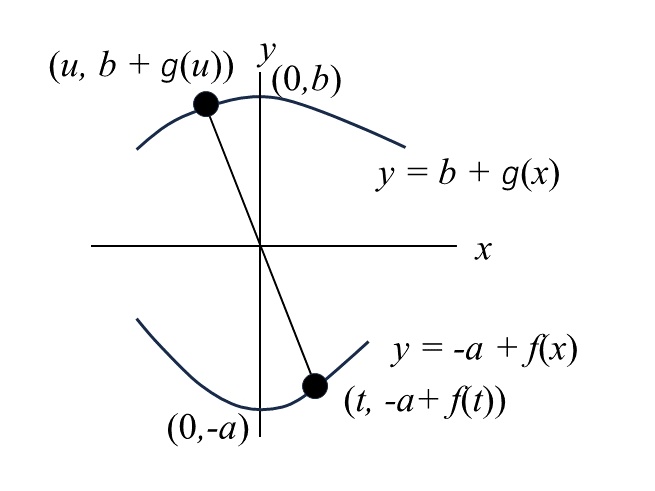}
  \caption{Local coordinates at a~fixed affine chord through the
basepoint}
  \label{fig:local}
\end{figure}

The~condition that $\overline{A(t)B(u)}$ pass through the~origin gives
\begin{equation}\label{eq:u-series}
  u(t)
  =
  -\frac ba t
  -\frac{b(af_2+bg_2)}{a^3}t^3
  -\frac{b(a^2f_3-b^2g_3)}{a^4}t^4
  +O(t^5).
\end{equation}
Let $v(t)$ be the~upper parameter with $g'(v(t))=f'(t)$ and let $w(t)$ be the~lower parameter with $f'(w(t))=g'(u(t))$.  Then
\begin{align}
  v(t)
  &=
  \frac{f_2}{g_2}t
  -\frac{3(f_2^2g_3-f_3g_2^2)}{2g_2^3}t^2
  +O(t^3),\label{eq:v-series}\\
  w(t)
  &=
  -\frac{bg_2}{af_2}t
  +
  \frac{3b^2(f_2^2g_3-f_3g_2^2)}
       {2a^2f_2^3}t^2
  +O(t^3).\label{eq:w-series}
\end{align}
The~reflected chord joins $A(w(t))$ to $B(v(t))$.

\begin{proposition}\label{prop:fixed-contact-points}
Let $L_0$ be the~fixed chord $x=0$.  The~contact point of $L_0$ with the~CSS and the~contact point of $L_0$ with the~ART are, respectively,
\begin{align}
  z_{\CSS}
  &=
  \left(
  0,-\frac{af_2+bg_2}{f_2-g_2}
  \right),\label{eq:css-contact}\\
  z_{\ART}
  &=
  \left(
  0,
  -\frac{(af_2-bg_2)(af_2+bg_2)}
  {af_2^2+bg_2^2}
  \right).\label{eq:art-contact}
\end{align}
Thus the~ART and the~CSS have the~same tangent line $L_0$, but their contact points are generally different.  They coincide if and only if
\begin{equation}\label{eq:contact-coincidence}
  af_2+bg_2=0.
\end{equation}
\end{proposition}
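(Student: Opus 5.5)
The plan is to compute both contact points with one first-order rule, so that only the linear terms of \eqref{eq:v-series} and \eqref{eq:w-series} are needed.

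\textbf{A first-order rule for the contact point.} Consider a smooth family of chords joining $P(t)$ to $Q(t)$, with $P(0)=(0,-a)$, $Q(0)=(0,b)$, and $x$-velocities $\xi_P=\dot P_x(0)$, $\xi_Q=\dot Q_x(0)$.
\begin{itemize}
\item The family passes through $L_0=\{x=0\}$ at $t=0$.
\item By \eqref{eq:dual-envelope}, the envelope point is the point of $L_0$ that lies on the first variation of the line.
\item For the point $\lambda P+(1-\lambda)Q$ of the chord, the $x$-component of its velocity is, to first order, $\lambda\xi_P+(1-\lambda)\xi_Q$.
\item The $y$-velocities are irrelevant, because the points already lie on $x=0$.
\end{itemize}
So the contact point is $\lambda P(0)+(1-\lambda)Q(0)$ with $\lambda=\xi_Q/(\xi_Q-\xi_P)$, provided $\xi_P\neq\xi_Q$. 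This proviso is exactly the linear independence of $\boldsymbol\ell,\boldsymbol\ell'$ required in Theorem~\ref{thm:cusp-test}. I would verify the rule once directly by writing $\boldsymbol\ell$ for the line through $P$ and $Q$ and evaluating $\boldsymbol\ell\times\boldsymbol\ell'$ at $t=0$.

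\textbf{The CSS contact point.} The affine chords near $L_0$ join $A(t)$ to $B(v(t))$, since $v$ is defined by the parallel-tangent condition. Then $\xi_P=1$ and $\xi_Q=v'(0)=f_2/g_2$. This gives $\lambda=f_2/(f_2-g_2)$ and $1-\lambda=-g_2/(f_2-g_2)$, so
\[
y=-a\lambda+b(1-\lambda)=-\frac{af_2+bg_2}{f_2-g_2},
\]
which is \eqref{eq:css-contact}. The denominator is non-zero because $f_2$ and $g_2$ have opposite signs.

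\textbf{The ART contact point.} The reflected chords join $A(w(t))$ to $B(v(t))$. Here $\xi_P=w'(0)=-bg_2/(af_2)$ and $\xi_Q=f_2/g_2$, so $\xi_Q-\xi_P=(af_2^2+bg_2^2)/(af_2g_2)$. The numerator $af_2^2+bg_2^2$ is strictly positive, so the condition $\xi_P\neq\xi_Q$ holds. Then
\[
\lambda=\frac{af_2^2}{af_2^2+bg_2^2},\qquad 1-\lambda=\frac{bg_2^2}{af_2^2+bg_2^2},
\]
and $y=(-a^2f_2^2+b^2g_2^2)/(af_2^2+bg_2^2)$. Factoring the numerator as $-(af_2-bg_2)(af_2+bg_2)$ gives \eqref{eq:art-contact}.

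\textbf{Common tangent line.} Since $w(0)=v(0)=0$, the reflected chord at $t=0$ is $L_0$ itself. Both envelopes are therefore tangent to $L_0$ at the points just computed.

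\textbf{The coincidence criterion.} Subtract the two ordinates and clear the positive and non-zero denominators. The difference is proportional to
\[
(af_2+bg_2)\bigl[(af_2-bg_2)(f_2-g_2)-(af_2^2+bg_2^2)\bigr]=-(af_2+bg_2)(a+b)f_2g_2.
\]
The factor $(a+b)f_2g_2$ never vanishes, because $a,b>0$ and $f_2g_2<0$. Hence the contact points coincide if and only if $af_2+bg_2=0$, which is \eqref{eq:contact-coincidence}.

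The main point needing care is the first-order rule itself. It must hold without the $t^2$ terms of the series, and the transversality condition $\xi_P\neq\xi_Q$ must hold, so that the envelope point is defined. Everything after that is algebra.
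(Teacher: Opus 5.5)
Your proposal is correct and is essentially the paper's argument. The paper also differentiates the line through $A(t),B(v(t))$, respectively $A(w(t)),B(v(t))$, and intersects the first-order line with $L_0$; your affine-ratio rule $\lambda=\xi_Q/(\xi_Q-\xi_P)$ makes that computation explicit and yields the same difference factor $(a+b)f_2g_2(af_2+bg_2)$. One small correction: $\xi_Q-\xi_P$ is exactly the second coordinate of $\boldsymbol\ell'(0)$, so $\xi_P\neq\xi_Q$ means the contact point is finite, which is stronger than linear independence of $\boldsymbol\ell,\boldsymbol\ell'$ (for $\xi_P=\xi_Q\neq0$ they are independent but the envelope point lies at infinity); this is harmless here because both differences are non-zero.
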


\begin{proof}
For the~CSS, differentiate the~line through $A(t)$ and $B(v(t))$ and intersect the~resulting first-order line with $L_0$.  This gives \eqref{eq:css-contact}.  The~same calculation for the~reflected line through $A(w(t))$ and $B(v(t))$ gives \eqref{eq:art-contact}.  Their difference factors as
$$
  (z_{\ART}-z_{\CSS})_2
  =
  \frac{f_2g_2(a+b)(af_2+bg_2)}
  {(f_2-g_2)(af_2^2+bg_2^2)}.
$$
All factors except $af_2+bg_2$ are non-zero, which proves the~last statement.
\end{proof}

\begin{theorem}
\label{thm:fixed-local}
Let $z_{\ART}$ and $z_{\CSS}$ be the~two, generally distinct, contact points in Proposition~\ref{prop:fixed-contact-points}.  Then $z_{\ART}$ is a~singular point of $\ART_C(p)$ if and only if $z_{\CSS}$ is a~singular point of $\CSS(C)$.  The~scalar condition governing both singularities is
\begin{equation}\label{eq:local-cusp}
  f_2^2g_3-f_3g_2^2=0.
\end{equation}
At a~point satisfying \eqref{eq:local-cusp}, define
\begin{equation}\label{eq:local-ordinary-art}
  \mathcal N_{\ART}
  =2(a+b)(a^2f_2^2+b^2g_2^2)
  (f_2^3g_4-f_4g_2^3)
  +f_2^2g_2^2(f_2+g_2)(af_2+bg_2)^2.
\end{equation}
Then $z_{\ART}$ is an~ordinary semicubical cusp if and only if
$\mathcal N_{\ART}\neq0$.
\end{theorem}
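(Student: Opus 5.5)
We apply the determinant test of Theorem~\ref{thm:cusp-test} to both envelopes, using the local series \eqref{eq:u-series}--\eqref{eq:w-series}. For the ART, the reflected line through $A(w(t))$ and $B(v(t))$ has a homogeneous lift
$$
  \boldsymbol\ell_{\ART}(t)=\widehat{A(w(t))}\times\widehat{B(v(t))},
  \qquad \widehat{(x,y)}=(x,y,1).
$$
For the CSS, the affine chord through $A(t)$ and $B(v(t))$ has the lift $\boldsymbol\ell_{\CSS}(t)=\widehat{A(t)}\times\widehat{B(v(t))}$. At $t=0$ both lines equal $L_0$, with $\boldsymbol\ell=(\pm(a+b),0,0)$. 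In this chart the lines are nearly vertical, so we write them as $x=\alpha(t)y+\beta(t)$, swapping the roles in the affine formula after Theorem~\ref{thm:cusp-test}. Singularity at $t=0$ is then $\alpha'\beta''-\alpha''\beta'=0$, and the ordinary-cusp condition is the analogous determinant with $\ell'''$, namely $\alpha'\beta'''-\alpha'''\beta'\neq0$ modulo the first condition. So the plan is to expand $\alpha,\beta$ for both families to order $t^3$.

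\textbf{Step 1 (the CSS).} The CSS chord has slope data
$$
  \alpha(t)=\frac{v-t}{b+a+g(v)-f(t)},
  \qquad
  \beta(t)=t-\alpha(t)\bigl(-a+f(t)\bigr).
$$
Expanding with \eqref{eq:v-series}, $\alpha'(0)=(f_2/g_2-1)/(a+b)\neq0$. The condition $\alpha'\beta''-\alpha''\beta'=0$ at $0$ should reduce to a nonzero multiple of the $t^2$-coefficient of $v$, which is exactly $f_2^2g_3-f_3g_2^2$. This recovers the known CSS cusp criterion.

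\textbf{Step 2 (the ART).} Do the same with endpoints $A(w)$ and $B(v)$. Since $v$ and $w$ are linear to first order with the same obstruction $f_2^2g_3-f_3g_2^2$ in their quadratic terms (see \eqref{eq:v-series} and \eqref{eq:w-series}, noting that the $u$-series has no $t^2$ term), the determinant at $t=0$ should again factor as a nonzero expression in $a,b,f_2,g_2$ times $f_2^2g_3-f_3g_2^2$. The nonzero prefactor should be essentially $af_2^2+bg_2^2$ or $a+b$ times powers of $g_2$ and $f_2$. This gives the equivalence of the two singularity conditions and proves \eqref{eq:local-cusp}. I will also check that $\alpha'(0)\neq0$, i.e.\ that $\boldsymbol\ell,\boldsymbol\ell'$ are independent; this is guaranteed by Proposition~\ref{prop:direction-containment}, since the transformed line turns strictly.

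\textbf{Step 3 (the ordinary-cusp condition).} Impose $g_3=f_3g_2^2/f_2^2$ and compute the third-order determinant. This requires $v$ and $w$ to order $t^3$, hence the $t^3$ term of $u$ from \eqref{eq:u-series} and the fourth-order coefficients $f_4,g_4$ through the inversions $g'(v)=f'(t)$ and $f'(w)=g'(u)$. The target is to show that the result equals a nonvanishing factor times $\mathcal N_{\ART}$. The term $f_2^3g_4-f_4g_2^3$ will come from the cubic terms of $v$ and $w$. The term $(af_2+bg_2)^2$ should arise from the cubic term of $u$, whose coefficient contains $af_2+bg_2$, after it propagates through $w$, combined with the curvature of the endpoints.

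I expect Step 3 to be the main obstacle: it is a heavy but mechanical series computation, ideally organised or verified with computer algebra. The delicate point is tracking and cancelling the nonzero prefactors so that exactly $\mathcal N_{\ART}$ remains, including the factor $2(a+b)(a^2f_2^2+b^2g_2^2)$ as printed in \eqref{eq:local-ordinary-art}.
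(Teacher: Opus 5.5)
Your proposal is correct and is essentially the paper's proof: it uses the same cross-product lifts and the determinant test of Theorem~\ref{thm:cusp-test}. Your chart $x=\alpha y+\beta$ changes the determinants only by the factor $-(a+b)^3$ at $t=0$ (and, for the third-order test, only modulo the first condition), and carrying out Step~3 does yield exactly $\mathcal N_{\ART}$, with $(af_2+bg_2)^2$ arising as you predict from the $t^3$ term of $u$ combined with the endpoint-curvature terms. One correction to Step~2: the non-zero prefactor of $f_2^2g_3-f_3g_2^2$ in the ART determinant is $3b(a+b)^2(af_2-bg_2)/(a^2f_2^2g_2^2)$, whose non-vanishing uses that $f_2$ and $g_2$ have opposite signs, while $af_2^2+bg_2^2$ only enters the regularity quantity $\alpha'(0)$.
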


\begin{proof}
Put $\widehat A(s)=(A(s),1)$ and $\widehat B(s)=(B(s),1)$, and choose
the~cross-product lift
$\boldsymbol\ell_{\mathrm{ART}}(t)=
\widehat A(w(t))\times\widehat B(v(t))$.
Substitution of \eqref{eq:u-series}, \eqref{eq:v-series}, and
\eqref{eq:w-series} gives
$$
  \det\bigl(
  \boldsymbol\ell_{\mathrm{ART}},
  \boldsymbol\ell_{\mathrm{ART}}',
  \boldsymbol\ell_{\mathrm{ART}}''
  \bigr)(0)
  =
  \frac{
  3b(a+b)^2(af_2-bg_2)
  (f_2^2g_3-f_3g_2^2)}
  {a^2f_2^2g_2^2}.
$$
The~prefactor is non-zero because $a,b>0$ and $f_2,g_2$ have opposite signs.

With the~cross-product lifts used in these calculations, the~second
coordinate of $\boldsymbol\ell_{\mathrm{ART}}'(0)$ is
$$
  \frac{a f_2^2+b g_2^2}{a f_2g_2},
$$
which is non-zero.  Thus the~reflected line family is regular at the
fixed chord.

For the~CSS, use the~analogous lift
$\boldsymbol\ell_{\mathrm{CSS}}(t)=
\widehat A(t)\times\widehat B(v(t))$.
A~second direct calculation gives
$$
  \det\bigl(
  \boldsymbol\ell_{\mathrm{CSS}},
  \boldsymbol\ell_{\mathrm{CSS}}',
  \boldsymbol\ell_{\mathrm{CSS}}''
  \bigr)(0)
  =
  -\frac{3(a+b)^2(f_2^2g_3-f_3g_2^2)}{g_2^3}.
$$
Here the~second coordinate of
$\boldsymbol\ell_{\mathrm{CSS}}'(0)$ is
$(f_2-g_2)/g_2\neq0$, so this line family is regular as well.  The~two
determinants therefore vanish simultaneously.  By Theorem
\ref{thm:cusp-test}, this is exactly the~asserted equivalence.

Finally, impose \eqref{eq:local-cusp} and continue the~first expansion
by one order.  After factorisation one obtains
$$
  \det\bigl(
  \boldsymbol\ell_{\mathrm{ART}},
  \boldsymbol\ell_{\mathrm{ART}}',
  \boldsymbol\ell_{\mathrm{ART}}'''
  \bigr)(0)
  =
  \frac{6b(a+b)}{a^3f_2^3g_2^3}\,
  \mathcal N_{\ART}.
$$
The~factor preceding $\mathcal N_{\ART}$ is non-zero.  The~last
assertion now follows from the~ordinary-cusp criterion
\eqref{eq:ordinary-cusp}.
\end{proof}

\begin{remark}
In curvature notation, \eqref{eq:local-cusp} is equivalent to
$\kappa_1'\kappa_2^2-\kappa_1^2\kappa_2'=0$,
the~usual $\CSS$ singular condition (see \cite{GiblinHoltom}).  The~formula is
independent of the~position of the~basepoint along the~fixed chord.
By contrast, \eqref{eq:local-ordinary-art} shows explicitly that the
ordinary-cusp condition for the~ART depends on that position through
$a$ and $b$.  For comparison, under \eqref{eq:local-cusp},
$$
  \det\bigl(
  \boldsymbol\ell_{\mathrm{CSS}},
  \boldsymbol\ell_{\mathrm{CSS}}',
  \boldsymbol\ell_{\mathrm{CSS}}'''
  \bigr)(0)
  =
  -\frac{12(a+b)^2}{g_2^4}
  (f_2^3g_4-f_4g_2^3).
$$
Hence the~corresponding $\CSS$ cusp is ordinary precisely when
$f_2^3g_4-f_4g_2^3\neq0$.  Thus Theorem~\ref{thm:fixed-local} matches
the~occurrence of the~two singularities, whereas their higher-order
non-degeneracy conditions are distinct.
\end{remark}

\begin{remark}\label{rem:fixed-local-not-global}
Theorem~\ref{thm:fixed-local} is a~local statement at one fixed affine chord through $p$.  It does not pair all cusps of the~CSS with all cusps of the~ART and does not imply that the~two fronts have the~same number of singularities.  For a~generic basepoint, $p$ lies on none of the~finitely many tangent lines at the~cusps of the~CSS, while $\ART_C(p)$ still has at least three cusps by Theorem~\ref{thm:three-cusps}.

If $p$ is the~midpoint of the~fixed chord, then $a=b$.  This means that $p$ lies on the~Wigner caustic at the~corresponding parallel pair. The~contact points in \eqref{eq:css-contact} and \eqref{eq:art-contact} are still distinct in general.  They coincide precisely when $f_2+g_2=0$, which is the~cusp condition for the~Wigner caustic at this parallel pair.
\end{remark}

\subsection{A~global lower bound for the~number of cusps}

\begin{theorem}\label{thm:three-cusps}
The~dual affine reflection curve $\Gamma^*_{C,p}$ is a~smooth embedded non-contractible closed curve in $\RP^2$.  Consequently, it has at least three projective inflection points.  If all its inflections are ordinary, then $\ART_C(p)$ has an~odd number of ordinary cusps, and this number is at least three.
\end{theorem}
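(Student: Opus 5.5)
The proof splits into three steps: first show that $\Gamma^*_{C,p}$ is a smooth embedded non-contractible closed curve, then quote a classical inflection theorem, and finally count parities.

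\textbf{Step 1: embeddedness and non-contractibility.} The pencil $\Lambda_p$ is a projective line in $\mathcal L$, hence a smooth embedded closed curve. Since $p\in\Int(C)$, it lies entirely inside $\mathcal S_C$. By Theorem~\ref{thm:line-map}, $\Phi_C$ is a diffeomorphism of $\mathcal S_C$, so $\Gamma^*_{C,p}=\Phi_C(\Lambda_p)$ is a smooth embedded closed curve in $\mathcal S_C\subset\RP^2$.

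For non-contractibility we argue exactly as in Proposition~\ref{prop:fixed-count}. The inclusion $\mathcal S_C\hookrightarrow\RP^2$ induces an isomorphism on $H_1(\,\cdot\,;\mathbb Z_2)$, because the core of the Möbius band is a projective line. The class $[\Lambda_p]$ is the non-zero class. The diffeomorphism $\Phi_C$ acts on $H_1(\mathcal S_C;\mathbb Z_2)\cong\mathbb Z_2$ by an automorphism, which must be the identity. Therefore $[\Gamma^*_{C,p}]\neq0$ in $H_1(\RP^2;\mathbb Z_2)$, so the curve is one-sided and non-contractible.

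Proposition~\ref{prop:direction-containment} supplies the regularity we need. It shows that the direction of the transformed line turns strictly monotonically, so the parametrisation $\ell(t)$ is an immersion with $\boldsymbol\ell,\boldsymbol\ell'$ independent, and Theorem~\ref{thm:cusp-test} applies throughout.

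\textbf{Step 2: at least three inflections.} Here I will invoke Möbius' theorem: every smooth embedded non-contractible closed curve in $\RP^2$ has at least three inflection points (see \cite{OvsienkoTabachnikov}). This is the main input, and I would cite it rather than reprove it. If a self-contained argument were needed, I would first try the standard proof. One uses a line meeting the curve transversally at one point, an affine chart in which the curve becomes a proper graph-like arc, and a count of the sign changes of $\det(\boldsymbol\ell,\boldsymbol\ell',\boldsymbol\ell'')$. The delicate case is that of nearly-linear curves. I expect this citation step to carry the whole content of the theorem.

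\textbf{Step 3: parity and transfer to cusps.} Set $D(t)=\det(\boldsymbol\ell,\boldsymbol\ell',\boldsymbol\ell'')$. Under the change of lift $\boldsymbol\ell\mapsto\lambda\boldsymbol\ell$, it transforms as $D\mapsto\lambda^3D$. Going once around a one-sided curve, the continuous lift returns as $-\boldsymbol\ell$. Hence $D(t+T)=-D(t)$ with respect to that continuous lift, so $D$ changes sign an odd number of times over a period.

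If all inflections are ordinary, the zeros of $D$ are simple. In particular $\det(\boldsymbol\ell,\boldsymbol\ell',\boldsymbol\ell''')\neq0$ there, since $D'=\det(\boldsymbol\ell,\boldsymbol\ell',\boldsymbol\ell''')$. Simple zeros are exactly the sign changes, so the number of zeros is odd. By Step 2 this number is at least three.

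Finally, Theorem~\ref{thm:cusp-test} identifies these zeros bijectively with the ordinary semicubical cusps of the envelope $\ART_C(p)$. This completes the proof.
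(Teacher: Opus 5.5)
Your proposal is correct and matches the paper's proof step for step. The paper also argues embeddedness and non-contractibility via the diffeomorphism $\Phi_C$ of the secant Möbius band, cites Möbius' three-inflection theorem, and counts parity with an anti-periodic lift satisfying $D(t+\uppi)=-D(t)$, transferring the zeros to cusps by Theorem~\ref{thm:cusp-test}. Your additions only make explicit what the paper leaves implicit: the $\mathbb Z_2$-homology argument, the independence of $\boldsymbol\ell,\boldsymbol\ell'$ via Proposition~\ref{prop:direction-containment}, and the identity $D'=\det(\boldsymbol\ell,\boldsymbol\ell',\boldsymbol\ell''')$ linking simple zeros to ordinary cusps.
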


\begin{proof}
The~pencil $\Lambda_p$ is a~projective line and hence an~embedded non-contractible circle in $\RP^2$.  By Theorem~\ref{thm:line-map}, $\Phi_C$ is a~diffeomorphism of the~secant Möbius band.  Therefore $\Gamma^*_{C,p}=\Phi_C(\Lambda_p)$ is again embedded and non-contractible.

The~classical Möbius theorem states that every smooth embedded non-contractible closed curve in $\RP^2$ has at least three inflection points (see, for example, \cite[Section~2.1]{Ghomi}). By projective duality and Theorem~\ref{thm:cusp-test}, these inflections become cusps of $\ART_C(p)$.

It remains to prove the~parity statement.  The~pullback of the~tautological line bundle over $\RP^2$ to a~non-contractible projective loop is the~non-trivial real line bundle over the~circle.  Consequently, on the~universal cover of the~parameter circle one may choose a~smooth homogeneous lift
$\boldsymbol\ell:\R\to\R^3\setminus\{0\}$
which is anti-periodic:
$$
  \boldsymbol\ell(t+\uppi)=-\boldsymbol\ell(t).
$$
Then
$$
  D(t)
  =
  \det\bigl(\boldsymbol\ell,\boldsymbol\ell',
  \boldsymbol\ell''\bigr)(t)
  \quad\text{satisfies}\quad
  D(t+\uppi)=-D(t).
$$
If all inflections are ordinary, all zeros of $D$ are simple and hence finite in number on a~projective period.  Shift the~initial parameter, if necessary, so that $D(0)\neq0$.  Since $D(\uppi)=-D(0)$ and the~sign changes at every simple zero, the~number of zeros in $[0,\uppi)$ is odd.  By Theorem~\ref{thm:cusp-test}, this is the~number of ordinary cusps of $\ART_C(p)$.
\end{proof}

\begin{remark}
The~conclusion concerns the~transform of a~pencil through a~point. It does not apply to an~arbitrary initial family of lines.  This explains why the~six-cusped example in the~right-hand part of Figure~\ref{fig:general-art} does not contradict Theorem~\ref{thm:three-cusps}.
\end{remark}

\begin{remark}\label{rem:art-half-rotation}
The~genericity parity conclusion in Theorem~\ref{thm:three-cusps} has an~equivalent rotation-theoretic
interpretation.  As the~generating line of
$\Gamma^*_{C,p}$ traverses the~family once, its direction traverses
$\RP^1$ once, by Proposition~\ref{prop:direction-containment}.
At every regular point of the~envelope this generating line is the
tangent line to $\ART_C(p)$, and the~tangent line extends continuously
across an~ordinary cusp.  Hence the rotation number of $\ART_C(p)$ is $1/2$, up to the sign. Therefore the number of cusps of $\ART_C(p)$ is odd in a generic situation (see, for instance, Lemma 4.15 in \cite{MillerZwier2026}).
\end{remark}

\section{Support lines and chamber bounds from the~CSS and Wigner caustic}

\noindent Translate the~plane so that the~basepoint $p$ is the~origin.  By Proposition~\ref{prop:direction-containment}, the~normal direction of a~transformed line is a~global projective parameter.  Put
$$
  n(\theta)=(\cos\theta,\sin\theta),
  \qquad
  \tau(\theta)=(-\sin\theta,\cos\theta),
$$
and write the~transformed line as
\begin{equation}\label{eq:line-support}
  n(\theta)\cdot(x-p)=q_p(\theta).
\end{equation}
Thus $q_p(\theta)$ is the~signed distance from the~basepoint to the~transformed chord with unit normal $n(\theta)$.  It is not the~support function of the~original oval.  Since the~same unoriented line is represented at $\theta$ and $\theta+\uppi$,
\begin{equation}\label{eq:anti-q}
  q_p(\theta+\uppi)=-q_p(\theta).
\end{equation}

\begin{proposition}\label{prop:support-envelope}
The~envelope of \eqref{eq:line-support} is
\begin{equation}\label{eq:support-envelope}
  X(\theta)
  =
  p+q_p(\theta)n(\theta)+q_p'(\theta)\tau(\theta).
\end{equation}
Its singular points are the~zeros of
\begin{equation}\label{eq:rho}
  \rho_p(\theta)=q_p(\theta)+q_p''(\theta).
\end{equation}
Such a~singular point is an~ordinary cusp when $\rho_p'(\theta)\neq0$.

Furthermore, $X$ has no regular inflection points.
\end{proposition}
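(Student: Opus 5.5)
This is the classical support-function computation for a family of lines parametrised by the normal angle. I would carry it out directly and then read off the singular and inflection conditions.

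First, the envelope. Set $F(x,\theta)=n(\theta)\cdot(x-p)-q_p(\theta)$. The envelope consists of the points satisfying $F=0$ and $\partial_\theta F=0$. Using $n'=\tau$, the second equation reads
$$\tau(\theta)\cdot(x-p)=q_p'(\theta).$$
Since $\{n,\tau\}$ is an orthonormal frame, solving the two equations gives $x-p=q_pn+q_p'\tau$, which is \eqref{eq:support-envelope}. The parametrisation by $\theta$ is legitimate because, by Proposition~\ref{prop:direction-containment}, the transformed line turns strictly monotonically. Hence $\theta$ is a regular global parameter of $\Gamma^*_{C,p}$, and $q_p$ is smooth.

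Next, differentiate \eqref{eq:support-envelope}. Using $n'=\tau$ and $\tau'=-n$, the terms $q_p'\tau$ and $-q_p'n$ cancel against their counterparts, leaving
$$X'(\theta)=\bigl(q_p+q_p''\bigr)\tau=\rho_p(\theta)\tau(\theta).$$
So $X$ is singular exactly at the zeros of $\rho_p$. At such a zero,
$$X''=\rho_p'\tau \qquad\text{and}\qquad X'''=\rho_p''\tau-2\rho_p'n+(\text{terms vanishing with }\rho_p).$$
If $\rho_p'(\theta_0)\neq0$, then $X''$ and $X'''$ are proportional to $\rho_p'\tau$ and $-2\rho_p'n$ (modulo $\tau$), so they are linearly independent. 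This is the standard criterion for an ordinary semicubical cusp. One can also check consistency with Theorem~\ref{thm:cusp-test}: for the lift $\boldsymbol\ell=(\cos\theta,\sin\theta,-q_p-n\cdot p)$, a direct computation gives $\det(\boldsymbol\ell,\boldsymbol\ell',\boldsymbol\ell'')=\pm\rho_p$. Then $\det(\boldsymbol\ell,\boldsymbol\ell',\boldsymbol\ell''')$ reduces to $\pm\rho_p'$ at a zero of $\rho_p$, because differentiating the first determinant gives exactly the $\boldsymbol\ell'''$ determinant.

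Finally, the absence of regular inflections. At a regular point we have $\rho_p\neq0$, and the unit tangent is $\pm\tau(\theta)$, which rotates with angular speed $1$ in $\theta$. Equivalently,
$$[X',X'']=[\rho_p\tau,\ \rho_p'\tau-\rho_pn]=\rho_p^2\neq0,$$
so the signed curvature $1/\rho_p$ never vanishes and there are no inflection points.

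The only step I expect to need care with is the justification that $\theta$ is a smooth regular parameter, so that $q_p$ is smooth and the line family never stalls. This is supplied by the strict monotonicity in Proposition~\ref{prop:direction-containment}, together with the smoothness of $\Phi_C$ from Theorem~\ref{thm:line-map}. The anti-periodicity \eqref{eq:anti-q} guarantees that the formula is consistent under $\theta\mapsto\theta+\uppi$. Everything else is routine frame calculus.
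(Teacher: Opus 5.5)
Your proposal is correct and follows the paper's proof essentially step by step. You solve $n\cdot(X-p)=q_p$ and $\tau\cdot(X-p)=q_p'$ in the orthonormal frame, compute $X'=\rho_p\tau$, and use $X''=\rho_p'\tau$ together with the independence of $X''$ and $X'''$ at a zero of $\rho_p$ (equivalently $[X'',X''']=2(\rho_p')^2$), and $[X',X'']=\rho_p^2$ at regular points. Your additional check that $\det(\boldsymbol\ell,\boldsymbol\ell',\boldsymbol\ell'')=\pm\rho_p$ for the lift $(\cos\theta,\sin\theta,-q_p-n\cdot p)$ is correct and ties the statement to Theorem~\ref{thm:cusp-test}, but the proof does not need it.
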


\begin{proof}
Differentiate \eqref{eq:line-support} with respect to $\theta$.
At an~envelope point one has $n\cdot(X-p)=q_p$ and $\tau\cdot(X-p)=q_p'$, because $n'=\tau$.  Since $(n,\tau)$ is an~orthonormal frame, these
two equations give $X=p+q_pn+q_p'\tau$,
which is \eqref{eq:support-envelope}.

Using $n'=\tau$ and $\tau'=-n$, we obtain
$$
  X'
  =
  q_p'n+q_p\tau+q_p''\tau-q_p'n
  =
  (q_p+q_p'')\tau
  =
  \rho_p\tau.
$$
Hence $X'(\theta)=0$ if and only if $\rho_p(\theta)=0$, proving the
singularity criterion.

Differentiating once more gives $X''=\rho_p'\tau-\rho_p n$ and $X'''=(\rho_p''-\rho_p)\tau-2\rho_p'n$.
If $\rho_p(\theta_0)=0$ and  $\rho_p'(\theta_0)\neq0$, then
$$
  X''(\theta_0)
  =
  \rho_p'(\theta_0)\tau(\theta_0)
  \neq0.
$$
Since $[\tau,n]=-1$,
$$
  [X''(\theta_0),X'''(\theta_0)]
  =
  2\bigl(\rho_p'(\theta_0)\bigr)^2
  \neq0.
$$
This is the~standard non-degeneracy criterion for an~ordinary cusp.

Finally, at every regular point one has $\rho_p\neq0$, and therefore
$$
  [X',X'']
  =
  [\rho_p\tau,\rho_p'\tau-\rho_p n]
  =
  \rho_p^2
  >0.
$$
Thus the~tangent line has exactly second-order contact with the~front
at every regular point.  Consequently, $X$ has no regular inflection
points.
\end{proof}

Let $g_p:\RP^1\to\RP^1$ send the~direction of a~chord through $p$ to the~direction of its image under $\Phi_C$.  By Proposition~\ref{prop:direction-containment}, $g_p$ is an~orientation-preserving circle homeomorphism.

\begin{lemma}\label{lem:q-zero-fixed}
A~zero of $q_p$ corresponds precisely to a~fixed affine chord through $p$.  Equivalently, the~zeros of $q_p$ give the~tangent lines to $\CSS(C)$ that pass through $p$.
\end{lemma}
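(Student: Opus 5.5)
The plan is to unwind the definitions of $q_p$ and $\Phi_C$, and then invoke Theorem~\ref{thm:line-map} together with the duality between $\mathcal F_C$ and $\CSS(C)$.

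\emph{Step 1: zeros of $q_p$.} By \eqref{eq:line-support}, $q_p(\theta)$ is the signed distance from $p$ to the transformed line with unit normal $n(\theta)$. Hence $q_p(\theta)=0$ holds exactly when $p$ lies on that line. By Proposition~\ref{prop:direction-containment}, $\theta$ is a global parameter for $\Gamma^*_{C,p}=\Phi_C(\Lambda_p)$. So each $\theta$ corresponds to a unique $\ell\in\Lambda_p$ with $\Phi_C(\ell)$ having normal $n(\theta)$. A zero of $q_p$ therefore means that both $\ell$ and $\Phi_C(\ell)$ lie in $\Lambda_p$.

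\emph{Step 2: such a line is fixed.} I must show that $\Phi_C(\ell)\in\Lambda_p$ forces $\Phi_C(\ell)=\ell$. This is the one non-tautological point. Suppose $\ell'=\Phi_C(\ell)\neq\ell$ and both pass through $p$. Then $\ell'$ is a second line of the pencil whose image $\Phi_C(\ell')=\ell$, by involutivity, also lies in $\Lambda_p$. This alone is not a contradiction, so I would use the direction map instead.

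Write $g_p$ for the direction map, and let $h$ be the map sending the direction of $\ell\in\Lambda_p$ to the direction of the unique line of $\Lambda_p$ containing $\ell$. A zero of $q_p$ means that $\Phi_C(\ell)$ is a pencil line, namely the one whose direction is $g_p(\ell)$. The intended reading of the lemma, consistent with its use for counting common tangents of $\ART_C(p)$ and $\CSS(C)$, is the fixed-line case.

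I would argue as follows. If $\Phi_C(\ell)=\ell'\in\Lambda_p$ with $\ell'\ne\ell$, then $\ell$ and $\ell'$ meet only at $p$. Their endpoint pairs $\{x,y\}$ and $\{\iota_C x,\iota_C y\}$ therefore alternate on $C$. Since $\iota_C$ is an orientation-preserving fixed-point-free involution of the circle, it acts like a rotation by a half-turn in the cyclic order: it maps each arc $(x,y)$ into the complementary position. Applying $\iota_C$ to the pair $\{\iota_C x,\iota_C y\}$ returns $\{x,y\}$.

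I expect this combinatorial step to be the main obstacle. It may in fact fail in general, in which case the statement should be read as concerning zeros counted along the parametrisation by the original pencil line. In that reading $q_p$ measures the distance from $p$ to $\Phi_C(\ell)$ where $\ell$ is the pencil line with the same normal. Then $q_p=0$ at $\theta$ means $\Phi_C(\ell)$ passes through $p$ and has the direction of $\ell$, so $\Phi_C(\ell)=\ell$. I would adopt this parametrisation explicitly, using $g_p$ to transport parameters if needed.

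\emph{Step 3: conclude.} A fixed line lies in $\mathcal F_C$ by Theorem~\ref{thm:line-map}, so it is an affine chord $\overline{x\,\iota_C(x)}$ through $p$. Conversely, every affine chord through $p$ is fixed and gives a zero. Since $\mathcal F_C$ is projectively dual to $\CSS(C)$, the affine chords are exactly the tangent lines of $\CSS(C)$. This yields the equivalent formulation.
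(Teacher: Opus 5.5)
\textbf{There is a genuine gap at Step 2, the only non-tautological part of the lemma.} Steps 1 and 3 are fine. What is missing is a proof that $\Phi_C(\ell)\in\Lambda_p$ forces $\Phi_C(\ell)=\ell$.

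Your combinatorial attempt stops before reaching any contradiction. Its key claim, that $\iota_C$ sends the arc $(x,y)$ into the ``complementary position'', is also not what orientation-preservation gives: it sends that arc to $(\iota_C x,\iota_C y)$.

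Your fallback reading is circular. The zero set of $q_p$ records which lines of $\Gamma^*_{C,p}$ pass through $p$, and that does not depend on the parametrisation. If you parametrise by the original pencil line $\ell_\alpha$, you get (up to sign) $q_p\circ g_p$. A zero of that function says only that $\Phi_C(\ell_\alpha)\ni p$. Your claim that $\Phi_C(\ell_\alpha)$ then ``has the direction of $\ell_\alpha$'' is exactly $g_p(\alpha)=\alpha$, which is what had to be proved. No change of reading removes the difficulty, and you should not weaken the statement to dodge it.

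\textbf{The paper supplies the step you abandoned.} Let $\alpha$ and $\beta$ be the directions of $\ell$ and $\ell'=\Phi_C(\ell)$. Involutivity gives $g_p(\alpha)=\beta$ and $g_p(\beta)=\alpha$; you noticed this but did not use it.
\begin{itemize}
\item By Proposition~\ref{prop:direction-containment}, $g_p$ is an orientation-preserving homeomorphism of $\RP^1$.
\item By Proposition~\ref{prop:fixed-count}, $g_p$ has a fixed point, namely the direction of an affine chord through $p$.
\item Cutting $\RP^1$ at that fixed point makes $g_p$ an increasing self-homeomorphism of an interval. Such a map has no orbit of period two, so $\alpha=\beta$.
\end{itemize}
Two lines through $p$ with the same direction coincide, so $\ell'=\ell$.

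\textbf{Your own route can also be completed, and it is more elementary.} Since $\iota_C$ preserves orientation and swaps $x$ and $\iota_C x$, it maps the open arc from $x$ to $\iota_C x$ onto the complementary open arc. Hence $\{x,\iota_C x\}$ separates $y$ from $\iota_C y$.

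If $y\neq\iota_C x$, the cyclic order of the four points is therefore either $x,y,\iota_C x,\iota_C y$ or $x,\iota_C y,\iota_C x,y$. In both cases $\{x,y\}$ and $\{\iota_C x,\iota_C y\}$ do not separate each other. So $\ell$ and $\Phi_C(\ell)$ cannot cross at the interior point $p$, which contradicts the alternation you correctly derived. If $y=\iota_C x$, the chord is already fixed.

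This version avoids Proposition~\ref{prop:fixed-count} and the global homeomorphism property of $g_p$.
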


\begin{proof}
The~implication from a~fixed chord to a~zero is immediate.  Conversely, suppose that a~chord of direction $\alpha$ through $p$ is transformed into a~chord of direction $\beta$ that also passes through $p$.  The~involution property gives
$g_p(\alpha)=\beta$ and
$g_p(\beta)=\alpha$.
By Proposition~\ref{prop:fixed-count}, $g_p$ has a~fixed point.  An~orientation-preserving circle homeomorphism with a~fixed point has no non-trivial periodic orbit.  Hence $\alpha=\beta$, and the~chord is fixed.
\end{proof}

There is a~useful description of the~two chamber counts associated with the~Wigner caustic and the~CSS.  For further geometric and isoperimetric properties of the~Wigner caustic, see \cite{Zwier2016,ZwierCWMS, Zwierz2026}.  Recall that
$$
  \AreaEvolute(C)
  =
  \left\{\frac12\bigl(x+\iota_C(x)\bigr):x\in C\right\}.
$$
Write the~oval in polar coordinates about $p$ as $\gamma_p(\varphi)=r_p(\varphi)n(\varphi)$, where
$r_p(\varphi)>0$,
and define the~radial asymmetry function
\begin{equation}\label{eq:radial-asymmetry}
  a_p(\varphi)
  =
  \log\frac{r_p(\varphi+\uppi)}{r_p(\varphi)}.
\end{equation}
It is anti-periodic.  Its zeros are the~chords having midpoint $p$. Moreover,
$$
  \det\bigl(
  \gamma_p'(\varphi),
  \gamma_p'(\varphi+\uppi)
  \bigr)
  =
  r_p(\varphi)r_p(\varphi+\uppi)a_p'(\varphi).
$$
Thus the~zeros of $a_p'$ are exactly the~fixed affine chords through $p$.

For a~generic basepoint, denote by $M_C(p)$ the~number of chords of $C$ with midpoint $p$, and by $T_C(p)$ the~number of fixed affine chords through $p$.  The~count $M_C(p)$ is constant on the~chambers of the~complement of the~Wigner caustic and changes by two across a~regular branch.  Similarly, $T_C(p)$ is constant on the~chambers of the~complement of the~CSS and changes by two across a~regular branch. For background on the~relative position of the~Wigner caustic and the~CSS, see \cite{CraizerInvolutes}. Rolle's theorem applied to \eqref{eq:radial-asymmetry} gives
\begin{equation}\label{eq:midpoint-fixed-count}
  M_C(p)\leq T_C(p).
\end{equation}

\begin{theorem}
\label{thm:fixed-chord-cusp-bound}
Assume that $C$ and $p$ are generic, so that the~relevant zeros are simple and the~ART has only ordinary cusps.  Then
\begin{equation}\label{eq:chamber-cusp-bound}
  \#\operatorname{Cusps}\bigl(\ART_C(p)\bigr)
  \geq
  \max\{3,T_C(p)\}
  \geq
  \max\{3,M_C(p)\}.
\end{equation}
In particular, if $p$ belongs to a~Wigner caustic chamber containing $2r+1$ centred chords, then $\ART_C(p)$ has at least $2r+1$ cusps.
\end{theorem}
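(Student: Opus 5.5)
The bound $3$ comes directly from Theorem~\ref{thm:three-cusps}, and the second inequality in \eqref{eq:chamber-cusp-bound} is \eqref{eq:midpoint-fixed-count}. So the real content is $\#\operatorname{Cusps}(\ART_C(p))\geq T_C(p)$. I plan to prove it with the support-type parametrisation of Proposition~\ref{prop:support-envelope}. By Proposition~\ref{prop:direction-containment}, the normal angle $\theta$ is a global parameter for the transformed family on $\RP^1=\R/\uppi\mathbb Z$. The envelope is $X(\theta)=p+q_pn+q_p'\tau$, and its cusps are the zeros of $\rho_p=q_p+q_p''$. These zeros are simple by genericity, and they are ordinary by the same proposition.

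\textbf{Step 1 (zeros of $q_p$).} By Lemma~\ref{lem:q-zero-fixed}, the zeros of $q_p$ in a projective period $[\theta_0,\theta_0+\uppi)$ correspond bijectively to fixed affine chords through $p$. Hence $q_p$ has exactly $T_C(p)$ zeros there, all simple by genericity. This count is odd by Proposition~\ref{prop:fixed-count}, which is consistent with the antiperiodicity \eqref{eq:anti-q}.

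\textbf{Step 2 (Sturm-type comparison).} Let $k=T_C(p)$ and list the zeros of $q_p$ on the full circle $\R/2\uppi\mathbb Z$ cyclically as $\theta_1<\dots<\theta_{2k}$. Write $\rho_p=q_p+q_p''$. I will apply the Sturm comparison with $\sin(\theta-\theta_i)$, through the Wronskian identity
\[
\bigl(q_p'\sin(\theta-\theta_i)-q_p\cos(\theta-\theta_i)\bigr)'=\rho_p\sin(\theta-\theta_i).
\]
Integrating over $[\theta_i,\theta_{i+1}]$ gives $q_p'(\theta_{i+1})\sin(\theta_{i+1}-\theta_i)-0$ on the left. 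If $\rho_p$ had constant sign on an interval between consecutive zeros of $q_p$ of length less than $\uppi$, this forces a sign relation contradicting the simple-zero sign alternation of $q_p'$. The same identity, taken over a window of length $\uppi$, shows that $\rho_p$ must change sign between suitable zeros. The cleaner route is the standard lemma: if $\rho=q+q''$ with $q$ $2\uppi$-periodic and orthogonal structure, then the number of sign changes of $\rho$ is at least that of $q$. I would prove it by writing $q=\mathcal K\rho$, using the Green operator for $\frac{d^2}{d\theta^2}+1$ on antiperiodic functions; this is valid because antiperiodic $q$ has no obstruction from the kernel, since $\cos$ and $\sin$ are themselves antiperiodic, so one has to quotient them out. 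Alternatively I would use the elementary Rolle-type argument: $q$ has the same zero count as $q\,/\,\text{(suitable sine)}$.

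\textbf{Main obstacle.} The hard part is Step 2, because the kernel $\{\cos,\sin\}$ of $L=\frac{d^2}{d\theta^2}+1$ lies in the antiperiodic class. The inequality $\#Z(\rho)\ge\#Z(q)$ therefore cannot hold unconditionally: $q=\cos\theta$ has $\rho=0$. The correct statement is that the zeros of $\rho$ are at least the zeros of $q$ minus $2$, or the argument must use the fact that adding $A\cos\theta+B\sin\theta$ (a translation of $p$) changes $T$. I would first try the Rolle-type reduction. Write $q(\theta)=h(\theta)\sin(\theta-\theta_1)$ on each interval, so that
\[
\rho\sin(\theta-\theta_1)=\bigl(h'\sin^2(\theta-\theta_1)\bigr)'.
\]
Applying Rolle twice on the circle, and counting carefully the degeneration at $\theta_1$ and $\theta_1+\uppi$, yields $\#Z(\rho)\geq k$ on a projective period. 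The bookkeeping at these two anchor zeros, together with the parity of $k$ from Step~1 and the oddness of the cusp count, should close the argument. The final sentence of the theorem then follows because $M_C(p)=2r+1$ on that chamber.
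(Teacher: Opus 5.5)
Your reductions are fine: the bound $3$ from Theorem~\ref{thm:three-cusps}, the second inequality from \eqref{eq:midpoint-fixed-count}, and Step~1. The gap is that the core inequality $\#\{\rho_p=0\}\geq T_C(p)$ is never proved. Step~2 lists several routes, draws a wrong conclusion from the first one, and ends with ``should close the argument''.

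The Wronskian identity you write is already enough if read correctly. On a nodal interval $I=(\theta_i,\theta_{i+1})$ of length $<\uppi$ it gives $\int_I\rho_p(\theta)\sin(\theta-\theta_i)\dd\theta=q_p'(\theta_{i+1})\sin(\theta_{i+1}-\theta_i)$. Because $\theta_{i+1}$ is a simple zero, this has the sign opposite to that of $q_p$ on $I$. Hence $\rho_p$ takes the sign opposite to $q_p$ \emph{somewhere} in $I$. It is false that $\rho_p$ cannot have constant sign on $I$: for $q=\cos3\theta$ one has $\rho=-8\cos3\theta$. This is exactly the paper's local step, which the paper obtains instead from the Dirichlet Wirtinger inequality, $\int_Iq_p\rho_p\dd\theta=\int_I\bigl(q_p^2-(q_p')^2\bigr)\dd\theta<0$.

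The missing global step is a sign count. Choose $\xi_j\in I_j$ with $\rho_p(\xi_j)$ of sign opposite to $q_p|_{I_j}$. On your full circle $\R/2\uppi\mathbb Z$ there are $2T$ nodal intervals and these signs alternate cyclically. So $\rho_p$ vanishes in each $(\xi_j,\xi_{j+1})$, giving $2T$ zeros, that is, $T$ per projective period. The paper does the same on one projective period, closing the last interval with $\rho_p(\theta+\uppi)=-\rho_p(\theta)$ and $T$ odd.

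Your ``main obstacle'' is a misdiagnosis. For $T\geq3$ every nodal interval has length $<\uppi$, which is all that Sturm or Wirtinger needs, so the kernel $\{\cos\theta,\sin\theta\}$ of $\mathrm d^2/\mathrm d\theta^2+1$ never interferes. Your example $q=\cos\theta$ has $T=1$ and $\rho\equiv0$, which genericity excludes and the bound $3$ covers anyway. The weakening $\#Z(\rho)\geq\#Z(q)-2$ would not prove the theorem. The Green-operator route fails as stated: that operator is not invertible on antiperiodic functions, precisely because $\cos$ and $\sin$ are antiperiodic.

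Your Rolle route, by contrast, does close, and it is a genuinely different proof. Put $s=\sin(\theta-\theta_1)$. The quotient $h=q_p/s$ is smooth and $\uppi$-periodic, since $q_p$ and $s$ are both antiperiodic and $q_p$ vanishes simply at $\theta_1$ and $\theta_1+\uppi$. It has exactly $T-1$ zeros on $\R/\uppi\mathbb Z$, and $h(\theta_1)=q_p'(\theta_1)\neq0$.
\begin{itemize}
\item Rolle gives $T-1$ zeros of $h'$, one in each open arc between consecutive zeros of $h$. The function $g=h's^2$ vanishes at these points and at $\theta_1$.
\item If $\theta_1$ is not among them, a second application of Rolle gives $T$ zeros of $g'=\rho_ps$ in open arcs between zeros of $g$. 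These differ from $\theta_1$, the only zero of $s$ on $\R/\uppi\mathbb Z$, so they are zeros of $\rho_p$.
\item If $\theta_1$ is among them, Rolle gives only $T-1$. But then $g=O((\theta-\theta_1)^3)$ forces the extra zero $\rho_p(\theta_1)=0$; alternatively, use parity.
\end{itemize}
This division by a kernel solution treats the whole circle at once. The paper's nodal argument needs only a local energy inequality on each nodal interval plus sign alternation. Either works, but you have to commit to one and carry it out.
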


\begin{proof}
The~lower bound by three is Theorem~\ref{thm:three-cusps}.  Suppose now that $T=T_C(p)\geq3$.  By Lemma~\ref{lem:q-zero-fixed}, $q_p$ has $T$ simple zeros in one projective period.  They divide the~period into nodal intervals $I_j$, each of length strictly smaller than $\uppi$.

On such an~interval, the~Wirtinger inequality and integration by parts give
$$
  \int_{I_j}q_p\rho_p\,\dd\theta
  =
  \int_{I_j}\bigl(q_p^2-(q_p')^2\bigr)\,\dd\theta
  <0.
$$
Consequently, $\rho_p$ takes somewhere on $I_j$ the~sign opposite to that of $q_p$.  The~signs of $q_p$ alternate on consecutive nodal intervals, so the~corresponding signs of $\rho_p$ also alternate. Between the~selected points in every two consecutive intervals, $\rho_p$ has a~zero.  Hence it has at least $T$ zeros in one projective period. At the~last-to-first transition one uses $\rho_p(\theta+\uppi)=-\rho_p(\theta)$.  By Proposition~\ref{prop:support-envelope}, these are the~cusps of the~ART.  The~second inequality in \eqref{eq:chamber-cusp-bound} is \eqref{eq:midpoint-fixed-count}.
\end{proof}

\begin{figure}[ht]
  \centering
  \includegraphics[width=.41\linewidth]{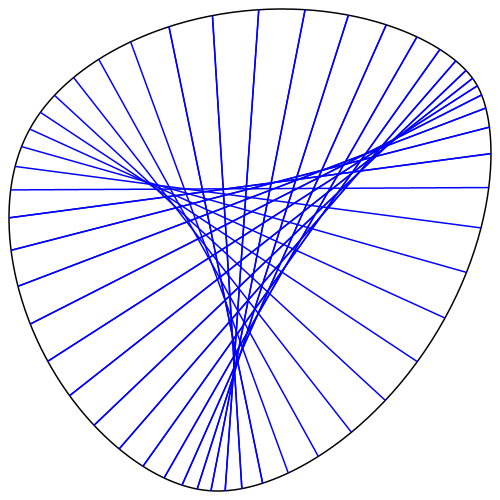}
\hfill
  \includegraphics[width=.41\linewidth]{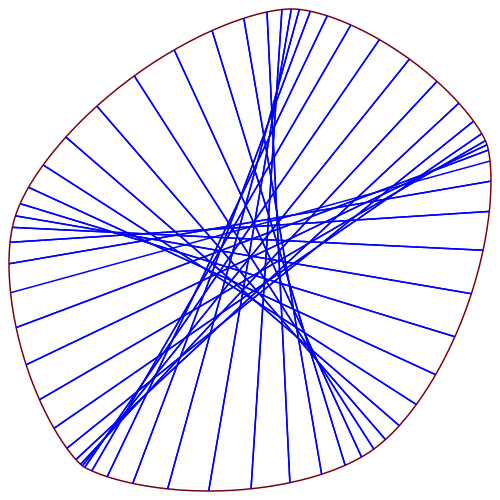}
  \caption{CSS chambers with different odd numbers of tangent lines
through the~basepoint}
  \label{fig:css-chambers}
\end{figure}

\begin{remark}\label{rem:similar-not-equal}
Figure~\ref{fig:css-chambers} illustrates the~chamber count used in the~preceding theorem. Theorem~\ref{thm:fixed-chord-cusp-bound} is the~precise sense in which an~ART may resemble the~CSS when $p$ lies in a~central Wigner caustic or $\CSS$ chamber: the~two fronts have many common tangent lines, and the~ART has at least the~corresponding number of cusps.  It does not imply affine, projective, or front equivalence, nor equality of cusp numbers. Additional pairs of $\ART$ cusps may occur.
\end{remark}

\subsection{Oriented area of the~smooth transform}

For a~closed piecewise smooth front $X$, oriented by its line-family parameter, write
$$
  A^*(X)=\frac12\oint[X,\dd X].
$$
This definition counts regions with their winding numbers and remains meaningful for self-intersecting fronts.

\begin{proposition}
\label{prop:smooth-art-area}
For every oval $C$ and every $p\in\Int(C)$,
\begin{equation}\label{eq:smooth-art-area}
  A^*\bigl(\ART_C(p)\bigr)
  =
  \frac12\int_0^\uppi
  \left(q_p(\theta)^2-q_p'(\theta)^2\right)\,\dd\theta
  \leq0.
\end{equation}
More precisely, if
$$
  q_p(\theta)
  =
  \sum_{\substack{k\geq1\\ k\ {\rm odd}}}
  \bigl(a_k\cos(k\theta)+b_k\sin(k\theta)\bigr),
$$
then
\begin{equation}\label{eq:smooth-art-fourier-area}
  A^*\bigl(\ART_C(p)\bigr)
  =
  -\frac{\uppi}{4}
  \sum_{\substack{k\geq3\\ k\ {\rm odd}}}
  (k^2-1)(a_k^2+b_k^2).
\end{equation}
Equality holds if and only if the~ART degenerates to one point.
\end{proposition}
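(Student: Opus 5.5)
We use the support-type parametrisation of Proposition~\ref{prop:support-envelope}: $X(\theta)=p+q_pn+q_p'\tau$ with $X'=\rho_p\tau$, where $\rho_p=q_p+q_p''$. Since the generating line turns once through $\RP^1$ as $\theta$ runs over $[0,\uppi]$ (Proposition~\ref{prop:direction-containment}), the front $\ART_C(p)$ is traced exactly once on this interval. This is a closed curve, because \eqref{eq:anti-q} gives $X(\theta+\uppi)=X(\theta)$. The plan is to compute
$$
A^*=\frac12\int_0^\uppi[X-p,X']\,\dd\theta .
$$
Translating by $p$ does not change the integral, since $\oint[p,\dd X]=0$ for a closed curve.

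For the integrand, use $[n,\tau]=1$ and $[\tau,\tau]=0$:
$$
[X-p,X']=[q_pn+q_p'\tau,\rho_p\tau]=q_p\rho_p=q_p^2+q_pq_p''.
$$
Integrate by parts on $[0,\uppi]$. The boundary term $q_pq_p'\big|_0^\uppi$ vanishes, because \eqref{eq:anti-q} makes $q_pq_p'$ $\uppi$-periodic. This gives $\int_0^\uppi q_p\rho_p\,\dd\theta=\int_0^\uppi(q_p^2-q_p'^2)\,\dd\theta$, which is the first equality in \eqref{eq:smooth-art-area}.

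Next, $q_p$ is smooth and anti-periodic with period $\uppi$, so only odd frequencies appear in its Fourier series, and it converges nicely. On $[0,\uppi]$ the functions $\cos k\theta$ and $\sin k\theta$ with $k$ odd are orthogonal, each with squared norm $\uppi/2$. Parseval then gives
$$
\int_0^\uppi(q_p^2-q_p'^2)\,\dd\theta=\frac{\uppi}{2}\sum_{k\ \mathrm{odd}}(1-k^2)(a_k^2+b_k^2).
$$
Multiplying by $\frac12$ yields \eqref{eq:smooth-art-fourier-area}. The $k=1$ term drops out, and every other term is $\le 0$, which proves non-positivity. This is exactly Wirtinger's inequality for anti-periodic functions.

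For the equality case: $A^*=0$ holds iff $a_k=b_k=0$ for all $k\ge3$, i.e.\ $q_p(\theta)=a_1\cos\theta+b_1\sin\theta=n(\theta)\cdot v$ for a fixed vector $v$. In that case $\rho_p\equiv0$, so $X'\equiv0$ and $X\equiv p+v$, meaning the front is a single point (all transformed lines pass through $p+v$). Conversely, if the ART is a single point $z$, then $q_p(\theta)=n(\theta)\cdot(z-p)$ is a pure first harmonic and the sum vanishes. The main thing to check carefully is the orientation and single-traversal convention, namely that integrating over one projective period $[0,\uppi]$ is the same as $\oint$ over the front with its line-family orientation. Everything else is routine.
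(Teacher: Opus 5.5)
Your proposal is correct and follows the paper's proof essentially step for step. It uses the same identity $[X-p,X']=q_p\rho_p$, the same integration by parts with the boundary term removed by anti-periodicity, the same odd-mode orthogonality on $[0,\uppi]$, and the same first-harmonic characterisation of the equality case; your remarks on translation invariance via $\oint[p,\dd X]=0$ and on single traversal over one projective period only spell out what the paper states in one line.
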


\begin{proof}
By \eqref{eq:anti-q} and \eqref{eq:support-envelope}, one has $X(\theta+\uppi)=X(\theta)$, so the~interval $[0,\uppi]$ traces the~front once.  Translating the~origin to $p$ and using $X'=(q_p+q_p'')\tau$ gives $[X,X']=q_p(q_p+q_p'')$.
The~endpoint term in integration by parts vanishes because both $q_p$ and $q_p'$ are anti-periodic.  This proves \eqref{eq:smooth-art-area}.  Anti-periodicity also means that only odd Fourier modes occur.  Orthogonality on $[0,\uppi]$ gives \eqref{eq:smooth-art-fourier-area}.

The~right-hand side of \eqref{eq:smooth-art-fourier-area} vanishes precisely when $q_p$ is a~first harmonic.  In that case $q_p n+q_p'\tau$ is constant, and hence \eqref{eq:support-envelope} is a~single point.  The~converse is immediate.
\end{proof}

\subsection{The~smooth ART energy and its maximising locus}

The~Fourier formula above shows that the~negative oriented area is not merely an~area functional attached to the~envelope.  Its kernel consists exactly of first harmonics, which represent pencils of concurrent lines, whereas the~higher odd harmonics measure the~departure of the~reflected family from a~pencil.  Thus the~resulting functional is naturally viewed as a~Sobolev-type energy of non-concurrence.  Proposition~\ref{prop:energy-distance-concurrence} makes this interpretation quantitative by comparing it with the~least-squares dispersion of the
front about its optimal concurrence point.  Maximising over the~basepoint removes the~choice of $p$, while normalising by the~area of $C$ produces an~affine invariant of the~oval. The~definition extends to the~closed convex body bounded by the~oval.

\begin{definition}
\label{def:smooth-art-invariant}
Let $A(C)>0$ denote the~area enclosed by $C$.  For $p\in\overline{\Int(C)}$ put
$$
  \mathcal E_C(p)
  =
  \begin{cases}
  -A^*(\ART_C(p)),&p\in\Int(C),\\
  0,&p\in C.
  \end{cases}
$$
The~\emph{normalised smooth $\ART$ invariant} and the~\emph{$\ART$-energy centre set} are
\begin{equation}\label{eq:smooth-art-invariant}
  \delta_{\ART}(C)
  =
  \frac{\max_{p\in\overline{\Int(C)}}\mathcal E_C(p)}
  {A(C)},
  \qquad
  \mathcal C_{\ART}(C)
  =
  \operatorname*{arg\,max}_{p\in\overline{\Int(C)}}
  \mathcal E_C(p).
\end{equation}
\end{definition}

The~next proposition makes precise the~statement that $\mathcal E_C(p)$ measures the~failure of the~reflected chords to form a~pencil.  Let $\Pi_1q_p$ be the~first Fourier harmonic of $q_p$ and write
$$
  \Pi_1q_p(\theta)
  =
  a_1(p)\cos\theta+b_1(p)\sin\theta,
  \qquad
  \widetilde q_p=q_p-\Pi_1q_p.
$$
Define
$$
  c_C(p)=p+\bigl(a_1(p),b_1(p)\bigr).
$$

\begin{proposition}
\label{prop:energy-distance-concurrence}
The~point $c_C(p)$ is the~mean point of the~parametrised front $X_p(\theta)$ and is the~unique constant point minimising
$$
  c\mapsto
  \int_0^\uppi|X_p(\theta)-c|^2\,\dd\theta.
$$
Moreover,
\begin{equation}\label{eq:energy-distance-concurrence}
  2\mathcal E_C(p)
  \leq
  \int_0^\uppi
  |X_p(\theta)-c_C(p)|^2\,\dd\theta
  \leq
  \frac52\mathcal E_C(p).
\end{equation}
Thus the~energy vanishes exactly when all reflected lines are concurrent.
\end{proposition}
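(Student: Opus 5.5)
The plan is to work entirely in the support-line description of Proposition~\ref{prop:support-envelope} and reduce everything to Parseval's identity for odd harmonics on $[0,\uppi]$.

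\textbf{Step 1: the first harmonic contributes a constant.} For a pure first harmonic $h(\theta)=a\cos\theta+b\sin\theta$ one checks directly that $h\,n+h'\tau=(a,b)$, a constant vector. Since \eqref{eq:support-envelope} is linear in $q_p$, splitting $q_p=\Pi_1q_p+\widetilde q_p$ gives
$$
  X_p(\theta)-c_C(p)=\widetilde q_p(\theta)\,n(\theta)+\widetilde q_p'(\theta)\,\tau(\theta),
  \qquad
  |X_p-c_C(p)|^2=\widetilde q_p^{\,2}+(\widetilde q_p')^2,
$$
the last identity because $(n,\tau)$ is orthonormal.

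\textbf{Step 2: $c_C(p)$ is the mean and the unique minimiser.} The components of $n$ and $\tau$ are first harmonics. The function $\widetilde q_p$ and its derivative contain only odd modes with $k\ge3$. For odd $j\neq k$ one has $\int_0^\uppi\cos j\theta\cos k\theta\,\dd\theta=0$, and similarly for the mixed and sine products, since $j\pm k$ is even and non-zero. Hence $\int_0^\uppi(X_p-c_C(p))\,\dd\theta=0$, so $c_C(p)$ is the mean point. The standard decomposition
$$
  \int_0^\uppi|X_p-c|^2\,\dd\theta=\int_0^\uppi|X_p-c_C(p)|^2\,\dd\theta+\uppi\,|c-c_C(p)|^2
$$
then yields existence and uniqueness of the minimiser.

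\textbf{Step 3: mode-by-mode comparison.} With the Fourier coefficients of Proposition~\ref{prop:smooth-art-area}, the same orthogonality (Parseval on $[0,\uppi]$; the series converges because $q_p$ is smooth) gives
$$
  \int_0^\uppi|X_p-c_C(p)|^2\,\dd\theta
  =\frac{\uppi}{2}\sum_{\substack{k\ge3\\ k\ {\rm odd}}}(k^2+1)(a_k^2+b_k^2).
$$
By \eqref{eq:smooth-art-fourier-area},
$$
  \mathcal E_C(p)=\frac{\uppi}{4}\sum_{\substack{k\ge3\\ k\ {\rm odd}}}(k^2-1)(a_k^2+b_k^2).
$$
Mode by mode, the ratio of the two coefficients is
$$
  \frac{2(k^2+1)}{k^2-1}\in\left(2,\tfrac52\right]\quad\text{for odd }k\ge3,
$$
with the value $\tfrac52$ attained at $k=3$ and the infimum $2$ approached as $k\to\infty$. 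Summing the termwise inequalities gives \eqref{eq:energy-distance-concurrence}.

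\textbf{Step 4: the vanishing statement.} If $\mathcal E_C(p)=0$, the lower bound forces $X_p\equiv c_C(p)$. Equivalently $\widetilde q_p=0$, so every line \eqref{eq:line-support} satisfies $n\cdot(c_C(p)-p)=q_p$ and passes through $c_C(p)$. Conversely, if all reflected lines pass through a common point $c$, then $q_p(\theta)=n(\theta)\cdot(c-p)$ is a first harmonic, and therefore $\mathcal E_C(p)=0$.

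There is no serious obstacle. The only points needing care are the orthogonality of distinct odd modes on the half-period $[0,\uppi]$ and the justification of termwise Parseval, both of which follow from the smoothness and anti-periodicity \eqref{eq:anti-q} of $q_p$.
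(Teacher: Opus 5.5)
Your proof is correct and follows essentially the same route as the paper. The paper also reduces to $X_p-c_C(p)=\widetilde q_p n+\widetilde q_p'\tau$. Instead of comparing coefficients mode by mode, it sets $Q=\int_0^\uppi\widetilde q_p^{\,2}\,\dd\theta$ and $D=\int_0^\uppi(\widetilde q_p')^2\,\dd\theta$, writes $2\mathcal E_C(p)=D-Q$ and $\int_0^\uppi|X_p-c_C(p)|^2\,\dd\theta=D+Q$, and then uses the spectral-gap Wirtinger bound $D\geq9Q$, which is your $k=3$ ratio. One small slip: in Step~4 it is the right-hand inequality $\int_0^\uppi|X_p-c_C(p)|^2\,\dd\theta\leq\frac52\mathcal E_C(p)$, not the left-hand one, that forces $X_p\equiv c_C(p)$ when $\mathcal E_C(p)=0$.
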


\begin{proof}
The~first harmonic gives
$(\Pi_1q_p)n+(\Pi_1q_p)'\tau
=\bigl(a_1(p),b_1(p)\bigr)$.
All remaining terms have zero mean on $[0,\uppi]$.  Hence
$X_p-c_C(p)
=\widetilde q_p n+\widetilde q_p'\tau$,
which proves the~assertion about the~mean and the~least-squares point. Put
$$
  Q=\int_0^\uppi\widetilde q_p^2\,\dd\theta,
  \qquad
  D=\int_0^\uppi(\widetilde q_p')^2\,\dd\theta.
$$
Only the~odd modes $k\geq3$ occur, so the~spectral-gap form of Wirtinger's inequality gives $D\geq9Q$.  On the~other hand,
$$
  2\mathcal E_C(p)=D-Q,
  \qquad
  \int_0^\uppi|X_p-c_C(p)|^2\,\dd\theta=D+Q.
$$
The~first inequality in \eqref{eq:energy-distance-concurrence} is immediate, while $D\geq9Q$ gives $Q\leq\mathcal E_C(p)/4$ and hence the~second one.
\end{proof}

\begin{theorem}
\label{thm:smooth-boundary-degeneration}
Let $x\in C$ and put $x^*=\iota_C(x)$.
The~reflected-line support function has a~smooth one-sided extension from $\Int(C)$ to $x$.  With the~line equation based at $x$, its boundary value is
\begin{equation}\label{eq:smooth-boundary-support}
  q_x(\theta)=(x^*-x)\cdot n(\theta).
\end{equation}
Consequently, as $p\to x$ from inside $C$,
\begin{equation}\label{eq:smooth-boundary-Cm}
  q_p\rightarrow q_x
  \quad\text{in the anti-periodic }C^m\text{ norm on }[0,\uppi]
  \quad\text{for every fixed }m,
\end{equation}
and
\begin{equation}\label{eq:smooth-boundary-Hausdorff}
  \ART_C(p)\rightarrow\{x^*\}
  \quad\text{in the Hausdorff metric}.
\end{equation}
More precisely, for $p$ in a~sufficiently small interior neighbourhood of $x$,
$$
  d_H\bigl(\ART_C(p),\{x^*\}\bigr)
  =O(|p-x|).
$$

Let
$p_t=x+tv+O(t^2)$, $t\to 0^+$,
where $p_t\in\Int(C)$ and $v$ points into the~tangent half-plane at $x$.  Define the~first variation
\begin{equation}\label{eq:smooth-boundary-first-variation}
  r_{x,v}(\theta)
  =
  D_pq_x(\theta)[v].
\end{equation}
Then, in every fixed $C^m$ norm,
\begin{align}
  q_{p_t}(\theta)
  &=
  q_x(\theta)+t\,r_{x,v}(\theta)+O(t^2),
  \label{eq:smooth-boundary-q-expansion}\\
  X_{p_t}(\theta)
  &=
  x^*+tY_{x,v}(\theta)+O(t^2),
  \label{eq:smooth-boundary-front-expansion}
\end{align}
where
\begin{equation}\label{eq:smooth-boundary-rescaled-front}
  Y_{x,v}(\theta)
  =
  v+r_{x,v}(\theta)n(\theta)
  +r_{x,v}'(\theta)\tau(\theta).
\end{equation}
Thus the~translated and rescaled ARTs converge as parametrised fronts:
$$
  \frac{X_{p_t}-x^*}{t}\rightarrow Y_{x,v}.
$$
Their energy has the~quadratic expansion
\begin{equation}\label{eq:smooth-boundary-energy-expansion}
  \mathcal E_C(p_t)
  =
  t^2\mathcal Q_x(v)+O(t^3),
  \qquad
  \mathcal Q_x(v)
  =
  \frac12\int_0^\uppi
  \left((r_{x,v}')^2-r_{x,v}^2\right)\,\dd\theta
  \geq0.
\end{equation}
Moreover,
$$
  A^*(Y_{x,v})=-\mathcal Q_x(v).
$$
If $r_{x,v}+r_{x,v}''$ has only simple zeros, then for all sufficiently small $t>0$ the~cusp directions of $\ART_C(p_t)$ are in one-to-one correspondence with those zeros.  All cusp points converge to $x^*$, while after the~rescaling in \eqref{eq:smooth-boundary-front-expansion} they converge to the~corresponding cusps of $Y_{x,v}$.
\end{theorem}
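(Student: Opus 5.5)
The plan is to describe the reflected line directly in terms of its endpoints and show that everything depends smoothly on $p$ up to and including the boundary. For $p\in\overline{\Int(C)}$ and a direction $\theta$, let $x_\pm(p,\theta)\in C$ be the two endpoints of the chord through $p$ with direction $\tau(\theta)$. For $p\in\Int(C)$ these are transverse intersections, hence smooth in $(p,\theta)$. Because $C$ has positive curvature, the implicit function theorem extends them smoothly to a one-sided neighbourhood of $x$, including $p=x$. There, for every non-tangent direction, one endpoint equals $x$. When $\theta$ approaches the tangent direction, the chord through $x$ degenerates to $T_xC$, so both endpoints collapse to $x$.

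To handle this degeneration I would not use the endpoints of the chord through $p$ as coordinates. Instead I would parametrise the secant by one endpoint $y\in C$ and write the other endpoint as a smooth function of $(p,y)$. The reflected line is then the join of $\iota_C(y)$ and $\iota_C(\text{other endpoint})$. Near the tangent configuration this join is the $\Phi_C$-image of a near-tangent secant, and $\Phi_C$ extends continuously to tangent lines, sending $T_xC$ to $T_{x^*}C$. I expect the divided-difference form of the join to give a smooth extension, in the same way that the secant map of a curve extends smoothly to the tangent. Re-expressing the family in the normal parameter $\theta$ then gives the smooth one-sided extension of $q_p(\theta)$ together with all its $\theta$-derivatives. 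This step, namely smoothness of $q$ jointly in $(p,\theta)$ uniformly across the tangent direction at $p=x$, is the main obstacle. As a fallback I would prove it with the local coordinates of Section~3, setting $C$ locally as a graph at $x$ and using the series \eqref{eq:v-series} and \eqref{eq:w-series}.

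Next I compute the boundary value. At $p=x$, every chord through $x$ has $x$ as one endpoint, so every reflected line passes through $x^*$. The line with normal $n(\theta)$ through $x^*$, written relative to the base point $x$, has $q_x(\theta)=(x^*-x)\cdot n(\theta)$. This is \eqref{eq:smooth-boundary-support}.

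The remaining statements then follow formally.
\begin{itemize}
\item $C^m$ convergence of $q_p$ is continuity of the extension in $p$.
\item Formula \eqref{eq:support-envelope} gives $X_p=p+q_pn+q_p'\tau$. At $p=x$ this is $x+(x^*-x)=x^*$, because a first harmonic produces a constant point.
\item The Lipschitz bound $O(|p-x|)$ and the Hausdorff convergence come from the $C^1$ dependence on $p$.
\item The Taylor expansion in $t$ of $q_{p_t}$ gives \eqref{eq:smooth-boundary-q-expansion}. Inserting it into the envelope formula gives \eqref{eq:smooth-boundary-front-expansion}, with the $v$ term coming from $p_t-x$.
\end{itemize}

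For the energy I use Proposition~\ref{prop:smooth-art-area}. Its integrand is invariant under adding a first harmonic, since the quadratic form vanishes on first harmonics and they are orthogonal to higher modes. Writing $q_{p_t}=q_x+tr+O(t^2)$ with $q_x$ a pure first harmonic, I get $\mathcal E_C(p_t)=t^2\mathcal Q_x(v)+O(t^3)$. Nonnegativity of $\mathcal Q_x(v)$ follows from the Fourier formula \eqref{eq:smooth-art-fourier-area}. The same computation applied to $Y_{x,v}$ is legitimate because translation by $v$ does not change $A^*$, and it gives $A^*(Y_{x,v})=-\mathcal Q_x(v)$.

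Finally, the cusp function is
$$
\rho_{p_t}=q_{p_t}+q_{p_t}''=t\,(r+r'')+O(t^2)
$$
in $C^1$, since $q_x+q_x''=0$. Dividing by $t$ and applying the stability of simple zeros under $C^1$-small perturbation yields the one-to-one correspondence of cusp directions. Proposition~\ref{prop:support-envelope} then identifies these zeros with cusps, and the front expansion gives convergence of the cusp points, both unrescaled and rescaled.
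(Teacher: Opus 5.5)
Your plan has a genuine gap at the step you yourself flag as the main obstacle, and the route you propose for it does not work. You parametrise a chord through $p$ by one endpoint $y\in C$ and treat the other endpoint $z(p,y)$ as a smooth function of $(p,y)$. This fails exactly at $(p,y)=(x,x)$, which is the only place where it is needed.

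Take the osculating-parabola model $Y=\kappa X^2/2$ at $x=0$ with $p=(0,t)$. The chord through $p$ and $(a,\kappa a^2/2)$ meets the curve again at $X=-2t/(\kappa a)$. So $z$, and with it the reflected line $\overline{\iota_C(y)\,\iota_C(z)}$, has no limit as $(t,a)\to(0,0)$. There is also a topological obstruction. For interior $p$ the map $y\mapsto\overline{py}$ is a double cover of $\Lambda_p$, whereas at $p=x$ it is a single cover of $\Lambda_x$. As $t\to0$, a small arc of $y$ near $x$ sweeps out almost the whole pencil. Hence re-expressing the family in $\theta$ cannot give uniform $C^m$ bounds. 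Your fallback via \eqref{eq:v-series} and \eqref{eq:w-series} does not help either. Those expansions describe a fixed affine chord through an interior basepoint with two distinct endpoints, and they never see the diagonal (tangent) degeneration.

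The missing idea is to avoid ordered endpoints altogether and work with unordered pairs, as the paper does.
\begin{enumerate}[(i)]
\item On $\operatorname{Sym}^2(C)$, use the coordinates $(s,d^2)$ near the diagonal. The chord map extends to a diffeomorphism onto the closed band of secants and tangents, because positive curvature makes the displacement of the chord from the tangent a non-vanishing smooth multiple of $d^2$.
\item The involution $\iota_C\times\iota_C$ descends smoothly to $\operatorname{Sym}^2(C)$. Your divided-difference remark is exactly the right tool for this. Hence $\Phi_C$ extends smoothly to the closed band.
\item The pencil $\Lambda_p$ is a projective line in the dual plane depending linearly on $p$. It lies in the closed band for $p\in\overline{\Int(C)}$, and at $p=x$ it touches the boundary $C^*$ only at $T_xC$.
\item Composing, $\Phi_C(\Lambda_p)$ depends smoothly on $p$ up to $p=x$, with $\Phi_C(\Lambda_x)=\Lambda_{x^*}$.
\item The normal angle is a diffeomorphism on $\Lambda_{x^*}$. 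By $C^1$-closeness and compactness it remains a uniform smooth coordinate for nearby $p$. This yields $q(p,\theta)$ smooth up to the boundary.
\end{enumerate}

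Once this is in place, the rest of your argument agrees with the paper's proof and goes through as written: the boundary value, $C^m$ and Hausdorff convergence, the two expansions, the energy expansion via the kernel of the quadratic form on first harmonics, $A^*(Y_{x,v})=-\mathcal Q_x(v)$ by translation invariance, and cusp stability from $\rho_{p_t}=t(r_{x,v}+r_{x,v}'')+O(t^2)$.
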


\begin{proof}
The~symmetric square
$$
  \operatorname{Sym}^2(C)=(C\times C)/\mathfrak S_2
$$
is a~compact Möbius band whose boundary is the~diagonal.  The~map sending an~unordered pair to its secant, and a~diagonal pair $\{z,z\}$ to $T_zC$, is a~smooth diffeomorphism onto the~closed band of secants and tangents.  In local coordinates near the~diagonal, the~two endpoints are $s-d$ and $s+d$. The~symmetric transverse coordinate is $d^2$, and positive curvature makes the~signed displacement of the~chord from the~tangent a~non-zero smooth multiple of $d^2$.  Hence the~endpoint involution
$\{z,w\}\mapsto
\{\iota_C(z),\iota_C(w)\}$
induces a~smooth extension of $\Phi_C$ to this closed band.

The~pencils $\Lambda_p$ depend smoothly on $p$ up to $p=x$.  The~limiting pencil consists of all secants through $x$ together with $T_xC$.  Its image consists of all lines through $x^*$ together with $T_{x^*}C$. In other words,
$\Phi_C(\Lambda_x)=\Lambda_{x^*}$.
The~normal direction is a~smooth global coordinate on these image curves, also at the~limiting pencil.  Their equations can therefore be written with a~function $q(p,\theta)$ smooth up to the~boundary. At $p=x$, every output line passes through $x^*$, which gives \eqref{eq:smooth-boundary-support}.  Smooth dependence gives \eqref{eq:smooth-boundary-Cm} and the~local estimate
$$
  \|q_p-q_x\|_{C^m}\leq K_m|p-x|.
$$

Since
$x+q_xn+q_x'\tau=x^*$,
the~envelope formula \eqref{eq:support-envelope} now gives \eqref{eq:smooth-boundary-Hausdorff} and its linear estimate. Taylor expansion of the~same formula gives \eqref{eq:smooth-boundary-q-expansion} and \eqref{eq:smooth-boundary-front-expansion}.

The~function $r_{x,v}$ is anti-periodic.  Hence its Fourier expansion contains only odd modes, and the~Wirtinger inequality gives $\mathcal Q_x(v)\geq0$.  The~first harmonic $q_x$ lies in the~kernel of the~bilinear form
$$
  B(f,g)=\int_0^\uppi(f'g'-fg)\,\dd\theta.
$$
Substitution of \eqref{eq:smooth-boundary-q-expansion} into the~energy therefore proves \eqref{eq:smooth-boundary-energy-expansion}.  The~support function of $Y_{x,v}$ relative to the~origin is
$v\cdot n+r_{x,v}$.
Its first summand is again a~first harmonic, so the~area formula gives $A^*(Y_{x,v})=-\mathcal Q_x(v)$.

Finally,
$$
  \rho_{p_t}=q_{p_t}+q_{p_t}''
  =
  t(r_{x,v}+r_{x,v}'')+O(t^2).
$$
The~assertion about simple zeros follows from the~implicit function theorem and Proposition~\ref{prop:support-envelope}.
\end{proof}

\begin{figure}[ht]
\centering
\includegraphics[width=\textwidth]
{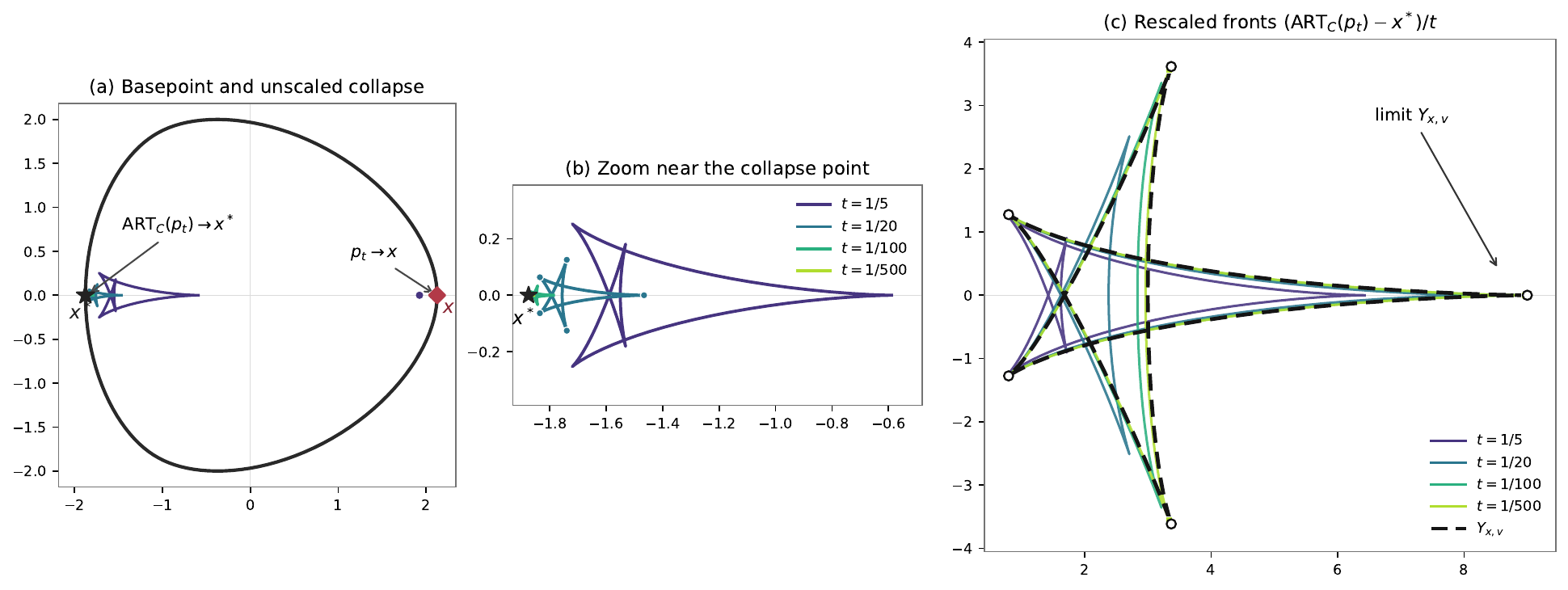}
\caption{Boundary collapse for
$h(\varphi)=2+\frac18\cos 3\varphi$ and
$p_t=(17/8-t,0)$.  The~unscaled ARTs converge to
$x^*=(-15/8,0)$, whereas their $1/t$-rescalings converge
to the~five-cusped profile $Y_{x,v}$}
\label{fig:smooth-boundary-collapse}
\end{figure}

\begin{remark}\label{rem:smooth-boundary-interpretation}
The~unscaled smooth $\ART$ always collapses to one point, in contrast with the~doubled-segment limit possible for a~$\CPPOS$ in Theorem~\ref{thm:boundary-degeneration}.  The~first-order profile $Y_{x,v}$ records the~geometry lost in that collapse.  If $\mathcal Q_x(v)>0$, the~diameter is generically of order $t$ and the~area energy is of order $t^2$.  If $\mathcal Q_x(v)=0$, then $r_{x,v}$ is a~first harmonic and $Y_{x,v}$ is a~point, so the~first non-trivial profile occurs at a~higher order.  For a~centrally symmetric oval this higher-order degeneration persists identically, because every $\ART$ is already a~point.

Figure~\ref{fig:smooth-boundary-collapse} illustrates this loss and recovery of geometry in a~concrete constant-width example.
\end{remark}

\begin{lemma}
\label{lem:pencil-rigidity}
Let $K\subset\R^2$ be a~convex body with $U=\Int(K)$, and let
$\mathcal S_K$ be the~open set of projective lines meeting $U$.  Suppose
that $\Phi:\mathcal S_K\to\mathcal S_K$ is continuous and injective and
that $\Phi(\Lambda_p)$ is a~projective pencil for every $p\in U$.  Then
$\Phi$ is the~restriction of a~projectivity of the~dual projective plane.
\end{lemma}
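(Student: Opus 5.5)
The plan is to dualise: a map sending pencils to pencils is a point map in disguise, and the fundamental theorem of projective geometry will then force it to be a projectivity. First, for $p\in U$ write $\Phi(\Lambda_p)=\Lambda_{\psi(p)}$. This defines a point $\psi(p)\in\RP^2$, and $\psi(p)$ is uniquely determined because a projective pencil determines its centre; the relevant part of the image of $\Lambda_p$ is an arc of infinitely many lines, all through that centre. Injectivity of $\Phi$ makes $\psi$ injective. Indeed, if $\psi(p)=\psi(p')$ with $p\neq p'$, then $\Phi(\Lambda_p)$ and $\Phi(\Lambda_{p'})$ lie in the same pencil. The curves $\Phi(\Lambda_p)$ and $\Phi(\Lambda_{p'})$ are injective continuous images of circles in the one-dimensional pencil, so each must be the whole pencil, or at least share an open arc. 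This contradicts injectivity, because $\Lambda_p\cap\Lambda_{p'}$ is a single line.

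Next, $\psi$ sends collinear points of $U$ to collinear points. If $p_1,p_2,p_3\in U$ all lie on a line $\ell\in\mathcal S_K$, then $\ell\in\Lambda_{p_i}$ for each $i$. Hence $\Phi(\ell)$ lies in all three image pencils, so $\psi(p_1),\psi(p_2),\psi(p_3)$ all lie on the line $\Phi(\ell)$. More precisely, $\psi(U\cap\ell)\subset\Phi(\ell)$ for every $\ell\in\mathcal S_K$. I would also check that $\psi$ is continuous, using continuity of $\Phi$: the centre of $\Phi(\Lambda_p)$ is the intersection of the images of two fixed transverse lines through $p$, and these vary continuously with $p$.

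The main step is then a local version of the fundamental theorem of projective geometry. An injective map from a convex open set $U\subset\R^2$ into $\RP^2$ that maps collinear triples to collinear triples, with image not contained in a line, is the restriction of a projectivity $T$. This is the classical result on collineations of convex domains (Darboux-type, or the theorem of Shiffman / Ha on locally defined collinearity-preserving maps). I would cite it. As a fallback, I would prove it directly: choose a projective frame of four points in general position in $U$, normalise $\psi$ to fix them by composing with a projectivity, and then use the von Staudt / Möbius-net construction. The net generated by the frame is dense in $U$, its points are constructed by joins and meets, and $\psi$ fixes all of them. Continuity of $\psi$ then gives $\psi=\mathrm{id}$ on $U$. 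The non-degeneracy is clear: if the image of $U$ lay in a line, all image pencils would share that line, contradicting injectivity of $\Phi$.

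Finally, transfer the conclusion back to lines. For $\ell\in\mathcal S_K$, the set $\ell\cap U$ is an open segment whose image $T(\ell\cap U)$ lies on $\Phi(\ell)$. Since it contains two distinct points, $\Phi(\ell)=T(\ell)$. Thus $\Phi$ agrees on $\mathcal S_K$ with the dual projectivity $T^{-\top}$ of $(\RP^2)^*$. The main obstacle is the local fundamental theorem, and in particular making the density argument for the Möbius net rigorous inside a bounded convex region; there I would use continuity of $\psi$ rather than a purely algebraic field-automorphism argument.
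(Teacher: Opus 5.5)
Your proposal is correct and follows essentially the same route as the paper. Both define the centre map $\psi$ by $\Phi(\Lambda_p)=\Lambda_{\psi(p)}$, check that it is injective, continuous and collinearity-preserving, invoke the local fundamental theorem of projective geometry (the paper cites Shiffman's Theorem~3), and recover $\Phi(\ell)$ as the join of the images of two points of $\ell\cap U$. Your explicit check that $\psi(U)$ is not contained in a line is a useful detail that the paper leaves implicit.
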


\begin{proof}
Write
$\Phi(\Lambda_p)=\Lambda_{F(p)}$,
$p\in U$,
thereby defining $F:U\to\RP^2$.  If $F(p)=F(q)$, then the~two image
pencils coincide. The~injectivity of $\Phi$ gives
$\Lambda_p=\Lambda_q$, and hence $p=q$.  Thus $F$ is injective.  It is
also continuous: locally, choose two continuously varying distinct
lines of $\Lambda_p$ and intersect their distinct $\Phi$-images.

If $p,q,r\in U$ are collinear, their common line $L$ belongs to all three pencils.  Consequently, $F(p),F(q),F(r)$ lie on $\Phi(L)$. The~local fundamental theorem of projective geometry
\cite[Theorem~3]{Shiffman} (see also Theorem 5.6 in \cite{ArtsteinAvidanSlomka}) therefore gives a~projectivity $\widehat F$ of $\RP^2$ whose restriction to $U$ is $F$.  Finally, every $L\in\mathcal S_K$ contains two distinct points $p,q\in U$, and $\Phi(L)=\overline{F(p)F(q)}=\widehat F(L)$.
Hence $\Phi$ is the~line action induced by $\widehat F$, as claimed.
\end{proof}

\begin{theorem}
\label{thm:smooth-energy-properties}
Let $C$ be an~oval.

\begin{enumerate}[(a)]
\item The~function $\mathcal E_C$ is smooth on $\Int(C)$, extends continuously to $\overline{\Int(C)}$, is non-negative, and vanishes on $C$.  Consequently $\mathcal C_{\ART}(C)$ is non-empty and compact.

\item If $F(x)=Ax+b$ is a~nonsingular affine map, then
  \begin{equation}\label{eq:smooth-energy-affine-covariance}
    \mathcal E_{F(C)}(F(p))
    =
    |\det A|\,\mathcal E_C(p),
    \qquad
    \delta_{\ART}(F(C))=\delta_{\ART}(C),
  \end{equation}
and
$\mathcal C_{\ART}(F(C))=F\bigl(\mathcal C_{\ART}(C)\bigr)$.

\item One has $\delta_{\ART}(C)=0$ if and only if $C$ is centrally symmetric.  In that case
$\mathcal E_C\equiv0$ and
$\mathcal C_{\ART}(C)=\overline{\Int(C)}$.
If $C$ is not centrally symmetric, then
$\delta_{\ART}(C)>0$ and
$\mathcal C_{\ART}(C)\subset\Int(C)$.

\item Every affine symmetry of $C$ preserves $\mathcal C_{\ART}(C)$.  In particular, if $\mathcal C_{\ART}(C)=\{c_{\ART}(C)\}$ is a~singleton, then $c_{\ART}(C)$ is an~affine-equivariant centre and is fixed by every affine symmetry of $C$.
\end{enumerate}
\end{theorem}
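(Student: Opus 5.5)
We prove the four parts in order, using the support-function description of Section~4 throughout.

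\emph{Part (a).} Smoothness in the interior comes from the construction. The map $\Phi_C$ is a diffeomorphism (Theorem~\ref{thm:line-map}), and the pencils $\Lambda_p$ depend smoothly on $p$. By Proposition~\ref{prop:direction-containment} the normal angle $\theta$ is a global coordinate on $\Gamma^*_{C,p}$. The implicit function theorem then shows that $q_p(\theta)$ is jointly smooth in $(p,\theta)$ on $\Int(C)\times\R$. Proposition~\ref{prop:smooth-art-area} writes $\mathcal E_C(p)$ as $\tfrac12\int_0^\uppi\bigl((q_p')^2-q_p^2\bigr)\,\dd\theta\geq0$, which is a smooth function of $p$.

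Continuity at the boundary comes from Theorem~\ref{thm:smooth-boundary-degeneration}. There $q_p\to q_x$ in $C^1$, and $q_x$ is a first harmonic, so the energy tends to $0$. To make this uniform in $x$, I use the statement in that proof that $q(p,\theta)$ extends smoothly to the closed band $\operatorname{Sym}^2(C)$. Compactness of $C$ then gives continuity of $\mathcal E_C$ on all of $\overline{\Int(C)}$. Since $\overline{\Int(C)}$ is compact, the maximum is attained and the argmax set is a non-empty closed subset, hence compact.

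\emph{Part (b).} Theorem~\ref{thm:line-map} gives $\Phi_{F(C)}\circ F=F\circ\Phi_C$ on lines, and clearly $F(\Lambda_p)=\Lambda_{F(p)}$. Hence the reflected line families correspond under $F$, and so do their envelopes: $\ART_{F(C)}(F(p))=F(\ART_C(p))$ as parametrised fronts. The oriented area $\tfrac12\oint[X,\dd X]$ is translation invariant and scales by $\det A$ under the linear part.

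The point needing care is the sign when $\det A<0$. The orientation in $A^*$ is fixed by the line-family parameter turning counterclockwise. If $F$ reverses orientation, it turns a counterclockwise family into a clockwise one, so the reparametrisation reverses orientation a second time. The net factor is $|\det A|$, which is consistent with non-positivity of $A^*$ on both sides. Since $A(F(C))=|\det A|\,A(C)$ and $F$ maps $\overline{\Int(C)}$ onto $\overline{\Int(F(C))}$, the invariance of $\delta_{\ART}$ and the equivariance of $\mathcal C_{\ART}$ follow.

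\emph{Part (c).} This is the main obstacle. If $C$ is centrally symmetric, every ART is a point (Introduction), so $\mathcal E_C\equiv0$ and the argmax set is everything. Conversely, suppose $\delta_{\ART}(C)=0$. Then $\mathcal E_C(p)=0$ for every interior $p$. By the equality case in Proposition~\ref{prop:smooth-art-area}, or Proposition~\ref{prop:energy-distance-concurrence}, every reflected family is concurrent, that is, $\Phi_C(\Lambda_p)$ is a pencil for every $p$.

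Now apply Lemma~\ref{lem:pencil-rigidity} with $K$ the convex body bounded by $C$. Every line meeting the interior is a secant, $\Phi_C$ is continuous and injective, and every $\Phi_C(\Lambda_p)$ is a pencil. The lemma makes $\Phi_C$ a projectivity $\widehat F$, with point map $F$. By the proof of Theorem~\ref{thm:smooth-boundary-degeneration}, $F$ extends continuously to $C$ by $F(x)=\iota_C(x)$. So $\widehat F$ maps $C$ to itself, and it maps $\Int(C)$ into $\overline{\Int(C)}$ by Proposition~\ref{prop:direction-containment}. Hence $\widehat F$ preserves the convex body, so it maps no line meeting the body to the line at infinity. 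It also sends parallel tangents to parallel tangents, because $\iota_C$ does.

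I expect to conclude that $\widehat F$ is affine as follows. It maps each point at infinity, viewed as the common point of a pair of parallel tangent lines, to another point at infinity. Therefore it preserves the line at infinity and is affine. An affine involution of $C$ with no fixed points on $C$, reversing the cyclic order of tangent directions as $\iota_C$ does (tangent direction $\mapsto$ the same direction, point moved to its partner), must be the point reflection $x\mapsto2o-x$. Hence $C$ is centrally symmetric.

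For the last claim of (c): if $C$ is not centrally symmetric, then some $\mathcal E_C(p)>0$. Since $\mathcal E_C$ vanishes on $C$ by (a), every maximiser lies in $\Int(C)$.

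\emph{Part (d).} This follows from (b) applied to an affine $F$ with $F(C)=C$: then $F(\mathcal C_{\ART}(C))=\mathcal C_{\ART}(C)$. If this set is the singleton $\{c_{\ART}(C)\}$, that point is fixed by every such $F$. Equivariance under arbitrary affine maps is again (b).
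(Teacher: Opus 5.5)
Your proposal is correct. Parts (a), (b), (d) and the first half of (c) follow the paper's route: the equality case of Proposition~\ref{prop:smooth-art-area} makes every $\Phi_C(\Lambda_p)$ a full pencil, and Lemma~\ref{lem:pencil-rigidity} then produces a projectivity $\widehat F$.

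You diverge in the step showing that $\widehat F$ is a central reflection. The paper writes the dual matrix $M$ of $\Psi$ and notes that the quadratic form $a(M\ell)_2-b(M\ell)_1$ vanishes on $C^*$. It then splits into two cases. A non-zero form forces $C^*$ to be a conic, so $C$ is an ellipse. An identically zero form puts $M$ in homothety shape.

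You instead obtain $\widehat F|_C=\iota_C$ from the collapse in Theorem~\ref{thm:smooth-boundary-degeneration} and continuity of $\widehat F$. Hence $\widehat F(T_xC)=T_{\iota_C(x)}C$ and $\widehat F(T_{\iota_C(x)}C)=T_xC$. The common point at infinity of this parallel pair is therefore actually fixed, not merely sent to some point at infinity as you wrote. Since every direction is a tangent direction of an oval, $\widehat F$ fixes the line at infinity pointwise. It is therefore affine with scalar linear part, and the conic branch never has to be considered; indeed your argument shows the paper's form is always identically zero.

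The paper's version is more algebraic and uses only that each single tangent goes to a parallel line. Yours exploits the pairing $T_xC\leftrightarrow T_{\iota_C(x)}C$ and is shorter and more geometric.

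Two small repairs are needed. First, $\iota_C$ preserves, rather than reverses, the cyclic order on $C$; your parenthetical correctly states the property you actually use. Second, you should say why $\widehat F$ is an involution. This follows from $\Phi_C^2=\operatorname{id}$ in Theorem~\ref{thm:line-map}, or because $\widehat F^2$ agrees with $\iota_C^2=\operatorname{id}$ at three non-collinear points of $C$. Then the scalar $\lambda$ satisfies $\lambda^2=1$, and $\lambda=1$ would make $\widehat F$ the identity, contradicting fixed-point-freeness of $\iota_C$.
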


\begin{proof}
For an~interior basepoint the~two intersections of a~line through $p$ with $C$, their parallel-tangent partners, and the~output normal angle all depend smoothly on $(p,\theta)$.  Hence so do $q_p$ and $\mathcal E_C(p)$.  The~boundary extension and its vanishing follow from Theorem~\ref{thm:smooth-boundary-degeneration} and Equation \eqref{eq:smooth-boundary-energy-expansion}. This proves (a).

The~ART construction is affine covariant.  Its oriented area, with the~orientation induced by the~line-family parameter, and the~ordinary area of $C$ are both multiplied by $|\det A|$.  This proves (b).

If $C$ is centrally symmetric with centre $o$, then $\ART_C(p)=\{2o-p\}$ for every $p$, and hence $\mathcal E_C\equiv0$.  Conversely, suppose that $\delta_{\ART}(C)=0$.  Non-negativity then implies $\mathcal E_C(p)=0$ for every interior $p$.  By Proposition~\ref{prop:smooth-art-area}, $\Phi_C(\Lambda_p)$ is contained in a~projective pencil.  It is an~embedded circle in that projective line, and hence it is the~whole pencil.  Lemma~\ref{lem:pencil-rigidity}, applied to the~convex body enclosed by $C$, now shows that $\Phi_C$ is the~restriction of a~projectivity $\Psi$ of the~dual plane.

The~extension to the~closed secant band used in Theorem~\ref{thm:smooth-boundary-degeneration} shows that $\Psi$ sends the~tangent at $x$ to the~tangent at $\iota_C(x)$.  Let $M$ be a~matrix of $\Psi$ in dual line coordinates $\ell=[a:b:c]$.  Since these two tangents are parallel,
$a(M\ell)_2-b(M\ell)_1=0$
on the~dual oval $C^*$.  If this quadratic polynomial is non-zero, the~smooth oval $C^*$ is a~component of its zero set and is therefore a~non-degenerate projective conic.  Hence $C$ itself is an~ellipse and is centrally symmetric.  If the~polynomial vanishes identically, then
$$
  M=
  \begin{pmatrix}
  \lambda&0&0\\
  0&\lambda&0\\
  *&*&\mu
  \end{pmatrix}.
$$
The corresponding projectivity of the original point plane is therefore an affine homothety.  Since
$\Phi_C^2=\operatorname{id}$ on the~open secant band, this homothety is
an~involution.  The~equality on tangent lines makes the~dual oval
invariant, so the~homothety preserves $C$. Strict convexity then shows
that its restriction to $C$ is the~fixed-point-free map $\iota_C$.
Its ratio is therefore $-1$.  Thus it is a~central reflection and $C$
is centrally symmetric.  The~remaining assertions in (c) follow from
(a), and (d) follows from affine covariance.
\end{proof}

\begin{proposition}
\label{prop:smooth-energy-critical}
For $j=1,2$, let
$$
  q_{p,j}(\theta)
  =
  \frac{\partial q_p(\theta)}{\partial p_j}.
$$
Then
\begin{equation}\label{eq:smooth-energy-gradient}
  \frac{\partial\mathcal E_C}{\partial p_j}(p)
  =
  -\int_0^\uppi
  \rho_p(\theta)q_{p,j}(\theta)\,\dd\theta,
  \qquad
  \rho_p=q_p+q_p''.
\end{equation}
Consequently every $p\in\mathcal C_{\ART}(C)$ of a~non-centrally symmetric oval satisfies
\begin{equation}\label{eq:smooth-energy-critical-system}
  \int_0^\uppi\rho_pq_{p,1}\,\dd\theta=0,
  \qquad
  \int_0^\uppi\rho_pq_{p,2}\,\dd\theta=0.
\end{equation}
\end{proposition}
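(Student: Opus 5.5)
Differentiating the area formula of Proposition~\ref{prop:smooth-art-area} under the integral sign gives the gradient directly. We use
$$
  \mathcal E_C(p)=\frac12\int_0^\uppi\bigl(q_p'(\theta)^2-q_p(\theta)^2\bigr)\,\dd\theta,
  \qquad p\in\Int(C).
$$
By Theorem~\ref{thm:smooth-energy-properties}(a), $q_p(\theta)$ depends smoothly on $(p,\theta)$ for interior $p$. Hence $q_{p,j}$ exists, is smooth in $\theta$, and $\partial_{p_j}$ commutes with $\partial_\theta$. Since $[0,\uppi]$ is compact, we may differentiate under the integral sign:
$$
  \frac{\partial\mathcal E_C}{\partial p_j}(p)
  =\int_0^\uppi\bigl(q_p'\,q_{p,j}'-q_p\,q_{p,j}\bigr)\,\dd\theta .
$$

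Next we integrate the first term by parts, writing $\int q_p'q_{p,j}'=[q_p'q_{p,j}]_0^\uppi-\int q_p''q_{p,j}$. The boundary term vanishes by anti-periodicity. By \eqref{eq:anti-q}, $q_p(\theta+\uppi)=-q_p(\theta)$ holds identically in $p$. Differentiating this identity in $\theta$ and in $p_j$ shows that $q_p'$ and $q_{p,j}$ are both anti-periodic. Their product is therefore $\uppi$-periodic, so the bracket is zero. Collecting the remaining terms gives $-\int_0^\uppi(q_p+q_p'')q_{p,j}\,\dd\theta$, which is \eqref{eq:smooth-energy-gradient}.

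For the critical-point system, let $C$ be non-centrally symmetric and $p\in\mathcal C_{\ART}(C)$. Theorem~\ref{thm:smooth-energy-properties}(c) gives $\mathcal C_{\ART}(C)\subset\Int(C)$, so $p$ is an interior maximum of the smooth function $\mathcal E_C$. Its gradient therefore vanishes there, and \eqref{eq:smooth-energy-gradient} yields \eqref{eq:smooth-energy-critical-system}.

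The only delicate point is the joint smoothness of $q$ in $(p,\theta)$, which justifies differentiating under the integral and interchanging the derivatives. We take this from the proof of Theorem~\ref{thm:smooth-energy-properties}(a). There the endpoints of the chord through $p$ in direction $\varphi$ depend smoothly on $(p,\varphi)$, and so do their $\iota_C$-partners. By Proposition~\ref{prop:direction-containment} the output normal angle $\theta$ is a strictly monotone function of $\varphi$, so the implicit function theorem lets us reparametrise smoothly by $\theta$.
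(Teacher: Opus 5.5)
Your proposal is correct and follows the paper's proof essentially step for step: you differentiate $\frac12\int_0^\uppi((q_p')^2-q_p^2)\,\dd\theta$ under the integral sign, integrate the first term by parts with the endpoint term vanishing by anti-periodicity of $q_p'$ and $q_{p,j}$, and use Theorem~\ref{thm:smooth-energy-properties}(c) to place every maximiser in the interior. Your extra justification of joint smoothness of $q$ in $(p,\theta)$ is a reasonable elaboration of what the paper takes from Theorem~\ref{thm:smooth-energy-properties}(a). Note that the smooth reparametrisation by $\theta$ needs the strictly positive derivative of the output angle established in the proof of Proposition~\ref{prop:direction-containment}, not mere strict monotonicity.
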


\begin{proof}
Differentiate
$$
  \mathcal E_C(p)
  =
  \frac12\int_0^\uppi
  \bigl((q_p')^2-q_p^2\bigr)\,\dd\theta
$$
at fixed $\theta$ and integrate the~first term by parts.  The~endpoint term vanishes because both $q_p'$ and $q_{p,j}$ are anti-periodic.  This gives \eqref{eq:smooth-energy-gradient}.  By Theorem~\ref{thm:smooth-energy-properties}(c), every maximiser in the~non-symmetric case is interior, so its two first derivatives vanish.
\end{proof}

\begin{figure}[h]
    \centering
    \includegraphics[width=0.97\linewidth]{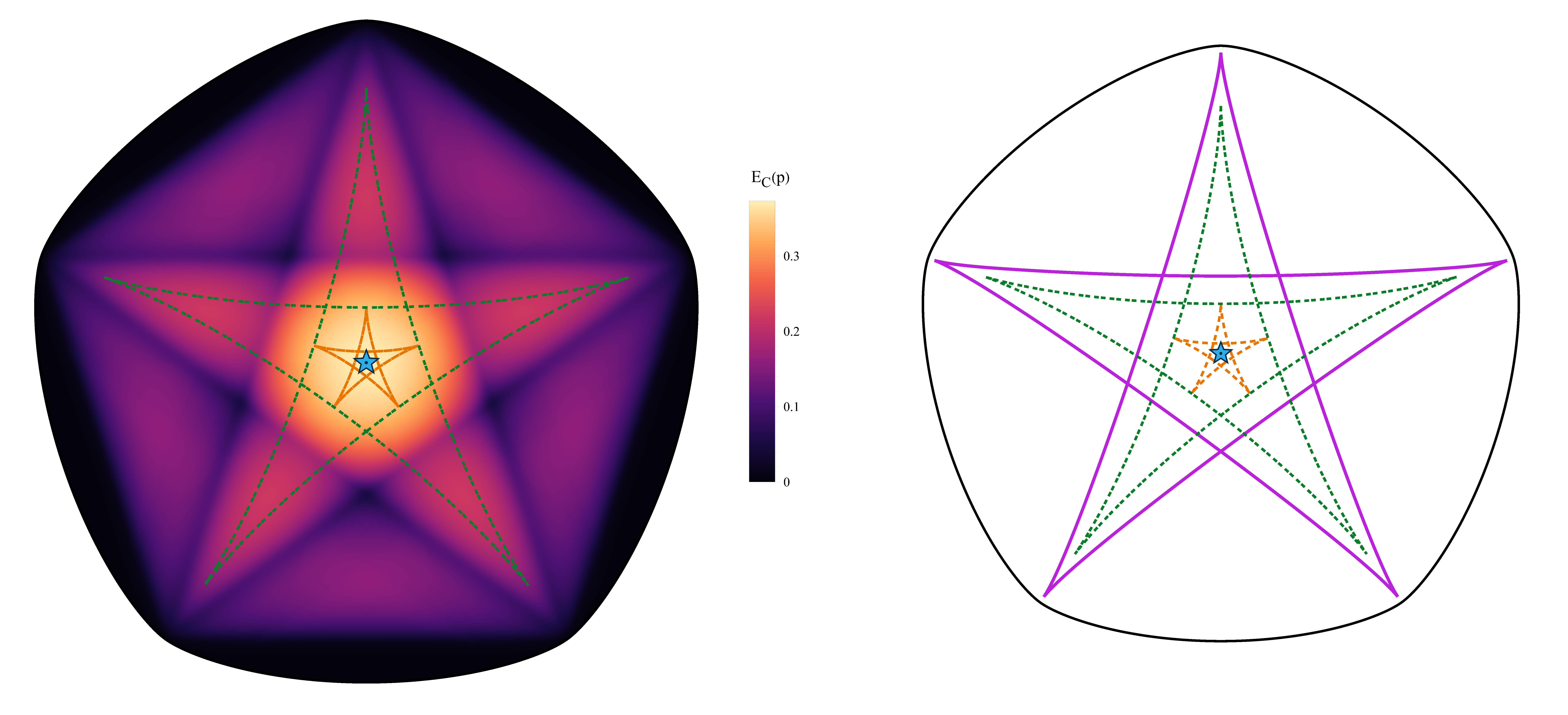}
    \caption{Left: the~global normalised energy.  Right: the~ART at the
    maximiser $p_{\ART}$, marked by a~star.  The~Wigner caustic and the
    centre symmetry set are visible in both panels}
    \label{fig:artEnergy}
\end{figure}

Figure~\ref{fig:artEnergy} shows the~global normalised energy together
with its unique maximiser for the~oval with support function
$h(\theta)=30+\sin 5\theta$.

\begin{remark}
\label{rem:smooth-energy-stability}
There is no concavity assertion in Theorem~\ref{thm:smooth-energy-properties}. The~centre set need not be a~singleton in the~degenerate centrally symmetric case, and symmetry may force several maximisers in other families.  If, however, $\mathcal C_{\ART}(C)=\{p_*\}$ and the~Hessian of $\mathcal E_C$ at $p_*$ is negative definite, then the~implicit function theorem applied to \eqref{eq:smooth-energy-critical-system} shows that this maximiser persists uniquely and depends smoothly on every sufficiently small smooth deformation of $C$.  Thus a~non-degenerate singleton defines a~stable affine centre, whereas $\mathcal C_{\ART}(C)$ is the~natural set-valued object without a~uniqueness hypothesis.  Central symmetry also rules out any unconditional containment in the~Wigner caustic or CSS: both of those sets reduce to the~centre, while $\mathcal C_{\ART}(C)=\overline{\Int(C)}$. Numerically one can check that the oval with the support function
\begin{align*}
    p_2(\theta)&=\frac{28}{25}-\frac{47}{1000}\cos 3\theta+\frac{3}{125}\cos 5\theta-\frac{47}{10\,000}\cos 7\theta-\frac{63}{10\,000}\cos 10\theta,\\
    p_3(\theta)&=1+\frac{1}{25}\cos 3\theta-\frac{1}{250}\cos 6\theta-\frac{3}{1000}\cos 9\theta-\frac{1}{1000}\cos 12\theta,\\
    p_5(\theta)&=1+\frac{1}{50}\cos 5\theta-\frac{1}{640}\cos 15\theta
\end{align*}
has exactly $2$, $3$, and $5$ elements of $\mathcal{C}_{\ART}$, respectively.
\end{remark}

\begin{corollary}
\label{cor:art-css-nodal-area}
Assume that the~fixed chords through $p$ are transverse and have normal directions
$0\leq\theta_1<\theta_2<\cdots<\theta_T<\uppi$.
Put $\theta_{T+1}=\theta_1+\uppi$ and $\ell_j=\theta_{j+1}-\theta_j$.  Then
\begin{equation}\label{eq:art-css-nodal-area}
  2\left|A^*\bigl(\ART_C(p)\bigr)\right|
  \geq
  \sum_{j=1}^T
  \left[
    \left(\frac{\uppi}{\ell_j}\right)^2-1
  \right]
  \int_{\theta_j}^{\theta_{j+1}}q_p(\theta)^2\,\dd\theta.
\end{equation}
\end{corollary}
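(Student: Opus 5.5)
We start from the area formula in Proposition~\ref{prop:smooth-art-area}, which expresses the negative oriented area as
$$
  2\left|A^*\bigl(\ART_C(p)\bigr)\right|=\int_0^\uppi\bigl((q_p')^2-q_p^2\bigr)\,\dd\theta .
$$
This quantity is non-negative, so the absolute value is harmless. Because $q_p$ and $q_p'$ are anti-periodic, their squares are $\uppi$-periodic. We may therefore replace the interval $[0,\uppi]$ by $[\theta_1,\theta_1+\uppi]=[\theta_1,\theta_{T+1}]$. That interval splits into the $T$ nodal intervals $I_j=[\theta_j,\theta_{j+1}]$, and the integral becomes a sum of $T$ local integrals.

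By Lemma~\ref{lem:q-zero-fixed}, the zeros of $q_p$ in one projective period are exactly the normal directions of the fixed affine chords through $p$. The relevant convention is that $q_p(\theta_{T+1})=q_p(\theta_1+\uppi)=-q_p(\theta_1)=0$. Hence $q_p$ vanishes at both endpoints of every $I_j$. In the transverse case there are no other zeros, though we do not even need that. The fixed chords exist by Proposition~\ref{prop:fixed-count}, so $T\geq1$ and the decomposition is meaningful. When $T=1$ the single interval has length $\uppi$ and its coefficient is $0$. When $T\geq2$ each $\ell_j<\uppi$.

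On each $I_j$ we apply the Dirichlet Wirtinger inequality. A function $f\in H^1_0(I)$ on an interval of length $\ell$ satisfies
$$
  \int_I (f')^2\,\dd\theta\geq\left(\frac{\uppi}{\ell}\right)^2\int_I f^2\,\dd\theta .
$$
This can be seen by expanding $f$ in the sine series $\sin(k\uppi(\theta-\theta_j)/\ell)$ and using Parseval. Subtracting $\int_{I_j}q_p^2\,\dd\theta$ from both sides gives
$$
  \int_{I_j}\bigl((q_p')^2-q_p^2\bigr)\,\dd\theta
  \geq\left[\left(\frac{\uppi}{\ell_j}\right)^2-1\right]\int_{I_j}q_p^2\,\dd\theta .
$$
Summing these inequalities over $j$ yields \eqref{eq:art-css-nodal-area}. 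The function $q_p$ is smooth, so each restriction lies in $H^1_0(I_j)$.

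The only point needing care is the bookkeeping of the last interval $[\theta_T,\theta_1+\uppi]$. We must check that the periodicity shift of the integration window is legitimate and that $q_p$ vanishes at its right endpoint. Both facts follow from the anti-periodicity \eqref{eq:anti-q}. Apart from this, the argument is the same Wirtinger step already used in the proof of Theorem~\ref{thm:fixed-chord-cusp-bound}, now in its quantitative form.
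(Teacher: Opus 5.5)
Your proposal is correct and follows the paper's proof: identify the $\theta_j$ as the consecutive zeros of $q_p$ via Lemma~\ref{lem:q-zero-fixed}, apply the Dirichlet Wirtinger inequality on each $[\theta_j,\theta_{j+1}]$, sum, and insert the result into \eqref{eq:smooth-art-area}. Your extra bookkeeping makes explicit what the paper leaves implicit: shifting the integration window to $[\theta_1,\theta_1+\uppi]$ by anti-periodicity, the vanishing of $q_p$ at $\theta_1+\uppi$, and the trivial case $T=1$.
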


\begin{proof}
By Lemma~\ref{lem:q-zero-fixed}, the~$\theta_j$ are precisely the~consecutive zeros of $q_p$.  Apply the~Dirichlet Wirtinger inequality separately on each interval $[\theta_j,\theta_{j+1}]$ and sum.  Formula \eqref{eq:smooth-art-area} then gives \eqref{eq:art-css-nodal-area}.
\end{proof}

\begin{remark}
Formula \eqref{eq:art-css-nodal-area} is a~genuine, although non-scalar, relation between the~ART and the~CSS: it uses the~angular distribution of the~tangent lines to the~CSS through $p$.  By contrast, there is no unconditional ordering of the~three numbers $|A^*(\ART_C(p))|$, $|A^*(\CSS(C))|$, and $|A^*(\AreaEvolute(C))|$.  The~first depends on $p$, while the~other two do not.  The~polygonal example in Example \ref{ex:no-area-ordering} makes this failure explicit.
\end{remark}

\section{Computing the~reflected-line support from the~oval}

\noindent Let $h$ be the~support function of $C$ relative to the~origin.  With $t$ denoting the~normal angle,
\begin{equation}\label{eq:oval-support}
  \gamma(t)
  =
  \bigl(
  h(t)\cos t-h'(t)\sin t,\,
  h(t)\sin t+h'(t)\cos t
  \bigr).
\end{equation}
The~condition of strict convexity is $h+h''>0$.  Parallel tangent partners have parameters $t$ and $t+\uppi$.

\begin{figure}[ht]
  \centering
  \includegraphics[width=.84\linewidth]{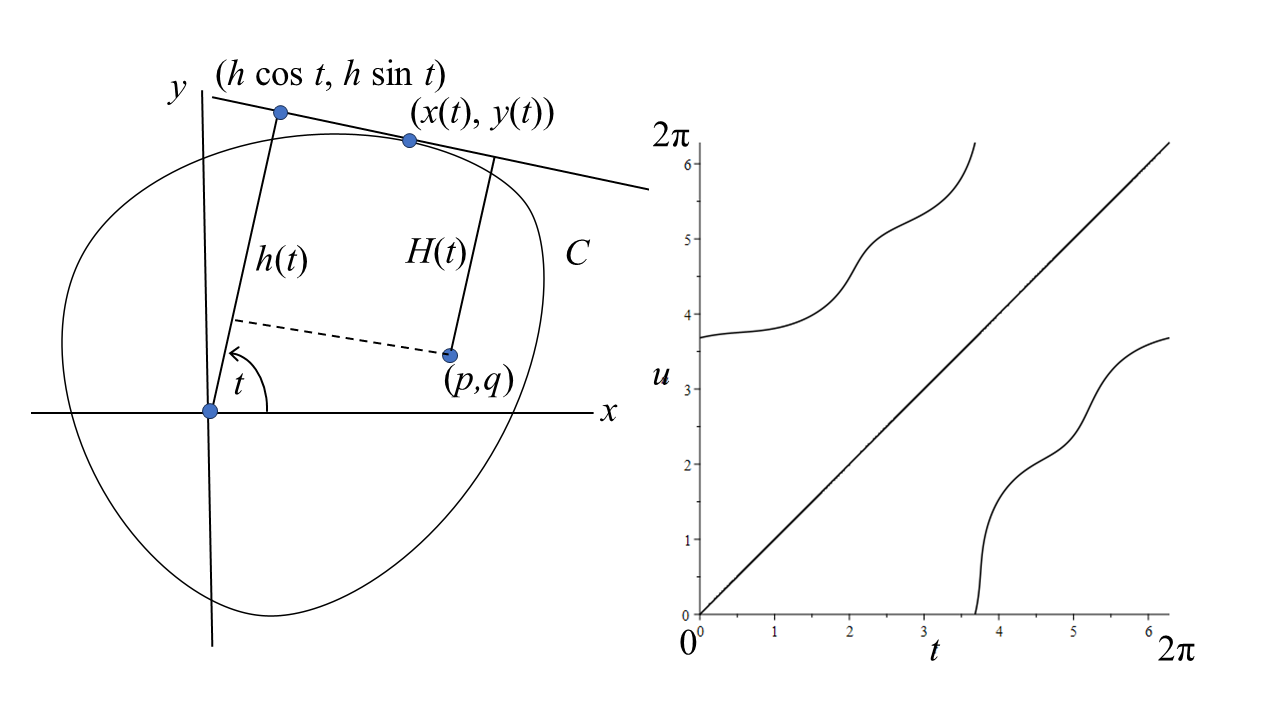}
  \caption{Left: support-function parametrisation.  Right: the
$(t,u)$-curve describing chords through a~fixed basepoint}
  \label{fig:support}
\end{figure}

Figure~\ref{fig:support} records the~two coordinate pictures used in the~calculation below.  If the~basepoint is moved to $p=(p_1,p_2)$, the~support function relative to $p$ is
$$
  H(t)=h(t)-p_1\cos t-p_2\sin t.
$$
In coordinates centred at $p$,
$$
  r_p(t)=\gamma(t)-p=H(t)n(t)+H'(t)\tau(t).
$$
The~chord through $\gamma(t)$ and $\gamma(u)$ passes through $p$ exactly when
\begin{equation}\label{eq:chord-equation}
  \sin(t-u)\bigl(H(t)H(u)+H'(t)H'(u)\bigr)
  =
  \cos(t-u)\bigl(H(t)H'(u)-H'(t)H(u)\bigr).
\end{equation}
The~diagonal $t=u$ is an~extraneous tangent solution when one is looking for genuine chords.  For an~interior basepoint, the~other endpoint $u=u(t)$ is unique and depends smoothly on $t$.

The~function $q_p$ from \eqref{eq:line-support} is now completely determined by $h$ and $p$.  Let
$$
  A(t)=\gamma(t+\uppi)-p,
  \qquad
  B(t)=\gamma(u(t)+\uppi)-p,
  \qquad
  d(t)=B(t)-A(t),
$$
and let $J(x,y)=(-y,x)$.  Choose the~continuous unit normal
$$
  N(t)=\frac{Jd(t)}{\lVert d(t)\rVert},
  \qquad
  \theta(t)=\arg N(t).
$$
Then
\begin{equation}\label{eq:q-from-h}
  q_p\bigl(\theta(t)\bigr)
  =
  N(t)\cdot A(t)
  =
  -\frac{\det(A(t),B(t))}
  {\lVert B(t)-A(t)\rVert}.
\end{equation}
The~normal angle $\theta(t)$ is strictly monotone by Proposition~\ref{prop:direction-containment}, so \eqref{eq:q-from-h} defines a~single-valued anti-periodic function of $\theta$.  If a~fixed external origin is preferred, the~corresponding line support is
$$
  q(\theta)=q_p(\theta)+p\cdot n(\theta).
$$
Thus changing only the~origin adds a~first harmonic.  By contrast, changing the~basepoint changes $u(t)$ and hence changes the~reflected line family itself.  Formula \eqref{eq:q-from-h} is necessarily non-local: $u(t)$ is the~second intersection of a~chord and depends on the~global shape of the~oval.

\begin{figure}[ht]
  \centering
  \includegraphics[width=.9\linewidth]{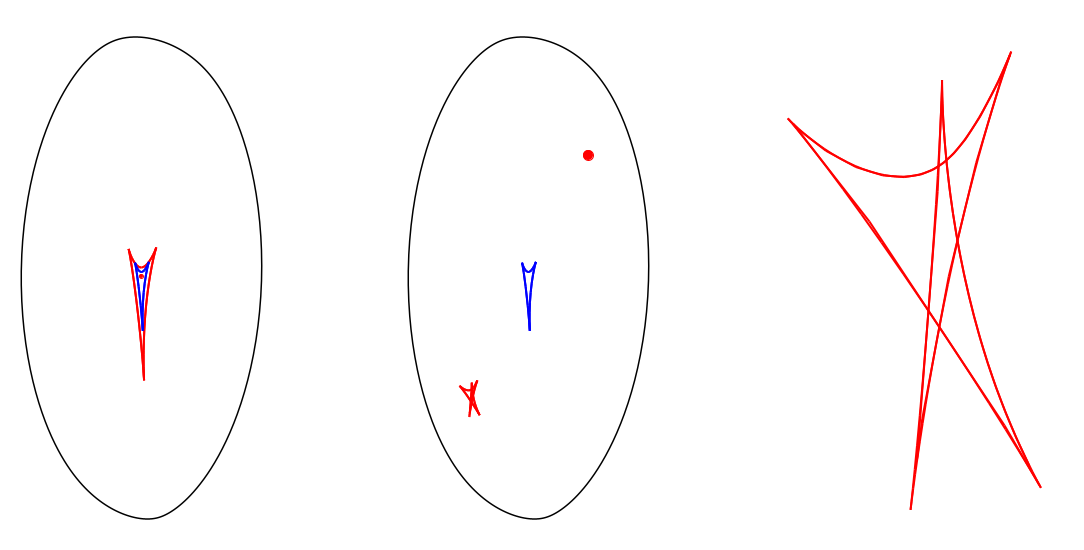}
  \caption{A~small perturbation of an~ellipse.  The~CSS is blue and the
affine reflection transforms are red.  The~centre and right panels also illustrate that the~ART need not lie in a~region enclosed by the~CSS}
  \label{fig:near-ellipse}
\end{figure}

Together, \eqref{eq:chord-equation}, \eqref{eq:q-from-h}, and \eqref{eq:det-cusp} give a~practical cusp algorithm:

\begin{enumerate}
\item solve \eqref{eq:chord-equation} for the~non-diagonal branch $u(t)$;
\item form homogeneous points $\widehat{\gamma(t+\uppi)}=(\gamma(t+\uppi),1)$ and $\widehat{\gamma(u(t)+\uppi)}=(\gamma(u(t)+\uppi),1)$;
\item form the~reflected line
  $$
    \boldsymbol\ell(t)
    =
    \widehat{\gamma(t+\uppi)}
    \times
    \widehat{\gamma(u(t)+\uppi)};
  $$
\item solve $\det(\boldsymbol\ell,\boldsymbol\ell',\boldsymbol\ell'')=0$;
\item check $\det(\boldsymbol\ell,\boldsymbol\ell',\boldsymbol\ell''')\neq0$ for ordinary cusps.
\end{enumerate}

This method remains symbolic when $h$ is a~trigonometric polynomial and is well suited to numerical continuation for a~general smooth support function.  It avoids expanding the~involution $\Phi_C$ in arbitrary coordinates on the~line space.  Figure~\ref{fig:near-ellipse} illustrates ARTs obtained from this procedure for a~small perturbation of an~ellipse.

\section{A~discrete affine reflection transform for CPPOS polygons}

\subsection{The~boundary involution}

We follow the~$\CPPOS$ (\textit{Convex Polygon with Parallel Opposite Sides}) notation of \cite{CraizerTeixeiraSilva,KonicerEtAl}.  Let
$$
  P=(P_0,P_1,\ldots,P_{2n-1})
$$
be a~positively oriented convex polygon, with indices understood modulo $2n$, and put $e_i=P_{i+1}-P_i$.  Assume
$e_{i+n}\parallel e_i$.
We assume throughout that every listed point $P_i$ is a~genuine vertex. Equivalently, consecutive edge vectors $e_i$ and $e_{i+1}$ are linearly independent. Write
\begin{equation}\label{eq:global-mu}
  e_{i+n}=-\mu_i e_i,
  \qquad
  \mu_i>0,
  \qquad
  \mu_{i+n}=\mu_i^{-1}.
\end{equation}
Put
\begin{equation}\label{eq:lambda-i}
  \lambda_i=\frac{1}{1+\mu_i}.
\end{equation}
Denote the~complete supporting line of $e_i$ by
$L_i=P_i+\R e_i$.
For compatibility with the~cusp convention used in \cite{KonicerEtAl}, we may additionally assume that all interior angles are obtuse.  Equivalently, consecutive positively oriented edge vectors form acute angles.  The~construction below itself does not require this extra angle assumption.

\begin{definition}\label{def:sigma-p}
Define $\sigma_P:\partial P\to\partial P$ edgewise by
\begin{equation}\label{eq:sigma-p}
  \sigma_P\bigl(P_i+s(P_{i+1}-P_i)\bigr)
  =
  P_{i+n}+s(P_{i+n+1}-P_{i+n}),
  \qquad 0\leq s\leq1.
\end{equation}
\end{definition}

The~formula agrees at vertices, so $\sigma_P$ is well defined.  It is an~orientation-preserving, fixed-point-free, piecewise affine involution.
We shall also use the~affine extension
\begin{equation}\label{eq:edge-affine-extension}
  \widetilde\sigma_i:L_i\rightarrow L_{i+n},
  \qquad
  \widetilde\sigma_i(P_i+se_i)=P_{i+n}+se_{i+n},
  \qquad s\in\R,
\end{equation}
of its restriction to the~$i$th side.

For every line $\ell$ meeting $\Int(P)$, let $x_\ell,y_\ell$ be its two boundary intersection points and define
\begin{equation}\label{eq:discrete-line-map}
  \Phi_P(\ell)
  =
  \overline{\sigma_P(x_\ell)\,\sigma_P(y_\ell)}.
\end{equation}
This is a~continuous involution of the~open Möbius band of secant lines of $P$, and it is smooth on each cell on which $x_\ell$ and $y_\ell$ belong to fixed open sides.

\begin{definition}\label{def:discrete-art}
Let $p\in\Int(P)$.  The~\emph{discrete affine reflection transform} $\ART_P(p)$ is the~completed projective dual of
$\Gamma^*_{P,p}=\Phi_P(\Lambda_p)$.
On every smooth arc of $\Gamma^*_{P,p}$ we take the~ordinary envelope.  At a~corner of the~dual curve, the~two one-sided envelope points lie on the~same transformed chord. The~segment between them is included in the~completion.
\end{definition}

Equivalently, the~completed envelope consists of all limits of intersections $\Phi_P(\ell_k)\cap\Phi_P(\widetilde\ell_k)$ whenever $\ell_k,\widetilde\ell_k\rightarrow\ell\in\Lambda_p$,
with the~possible ratios of the~two one-sided approaches included. This description is convenient both theoretically and numerically.

\subsection{CSS, Wigner caustic, covariance, and containment}

Let
$d_i=\overline{P_iP_{i+n}}$ 
be the~great diagonals and let
$D_i=d_i\cap d_{i+1}$.
The~$\CPPOS$ centre symmetry set is the~polygonal chain with vertices $D_i$.  Put
$M_i=\frac12(P_i+P_{i+n})$.
The~polygonal Wigner caustic is the~polygonal chain with vertices $M_i$. The study of the discrete Wigner caustic and the centre symmetry set is in \cite{CraizerTeixeiraSilva,KonicerEtAl}.

\begin{theorem}\label{thm:discrete-properties}
The~discrete construction has the~following properties.

\begin{enumerate}[(a)]
\item $\Phi_P$ is an~involutive homeomorphism of the~secant-line Möbius band and is affine covariant.
\item The~fixed lines of $\Phi_P$ are precisely $\overline{x\,\sigma_P(x)}$, $x\in\partial P$.  Their completed projective dual is exactly $\CSS(P)$.
\item The~midpoint locus
  $$
    \left\{\frac12\bigl(x+\sigma_P(x)\bigr):x\in\partial P\right\}
  $$
is exactly $\AreaEvolute(P)$.
\item For every $p\in\Int(P)$,
$\ART_P(p)\subset P$.
\item If $P$ is centrally symmetric with centre $o$, then $\sigma_P(x)=2o-x$ and $\ART_P(p)=\{2o-p\}$.
\item On a~cell where the~endpoints of the~original chord remain on two fixed sides, the~regular part of $\ART_P(p)$ is a~conic arc or a~degeneration of a~conic.  Hence non-smooth features occur only at cell transitions or at degenerate conic pieces.
\end{enumerate}
\end{theorem}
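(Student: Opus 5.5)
I will prove the six items in order, using the edgewise description of $\sigma_P$ and its affine extensions $\widetilde\sigma_i$ throughout.

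\textbf{Items (a) and (e).} For (a), the intersection map $\ell\mapsto\{x_\ell,y_\ell\}$ is a homeomorphism from the open secant band onto the unordered off-diagonal pairs of $\partial P$. This holds by the same convexity argument as for $J_C$: a line meeting $\Int(P)$ crosses $\partial P$ in exactly two points, and these depend continuously on the line. Since $\sigma_P$ is a fixed-point-free involutive homeomorphism of $\partial P$, conjugation gives an involutive homeomorphism $\Phi_P$. Affine covariance holds because an affine map $A$ sends a $\CPPOS$ to a $\CPPOS$, preserves ratios $s$ along edges, and preserves the labelling $i\mapsto i+n$ of parallel sides. Hence $\sigma_{A(P)}\circ A=A\circ\sigma_P$. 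For (e), central symmetry forces $P_{i+n}=2o-P_i$, so \eqref{eq:sigma-p} gives $\sigma_P(x)=2o-x$. Every transformed chord through $2o-x_\ell$ and $2o-y_\ell$ is then the point reflection of $\ell$ and passes through $2o-p$. All limits of intersections in the completed envelope therefore equal $2o-p$.

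\textbf{Items (b) and (c).} For the fixed lines in (b), I repeat the argument of Theorem~\ref{thm:line-map}: $\{\sigma_P(x),\sigma_P(y)\}=\{x,y\}$ with no fixed points forces $y=\sigma_P(x)$. For $x=P_i+se_i$, the fixed line joins $P_i+se_i$ to $P_{i+n}-s\mu_ie_i$. This is a pencil-type family between two parallel lines, interpolating the great diagonals $d_i$ (at $s=0$) and $d_{i+1}$ (at $s=1$). Both endpoints move affinely in $s$, in opposite directions with ratio $\mu_i$, so every line of the family passes through the single point dividing the two parallel sides in ratio $1:\mu_i$. That point is $D_i=d_i\cap d_{i+1}$. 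The completed dual of side $i$'s family is therefore the vertex $D_i$. The corners between consecutive families contribute, by the completion rule, the segment of $d_{i+1}$ joining $D_i$ to $D_{i+1}$. This gives exactly the chain $\CSS(P)$. Item (c) is direct: $\frac12(x+\sigma_P(x))=M_i+\frac s2(e_i+e_{i+n})$ is affine in $s$, running from $M_i$ to $M_{i+1}$.

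\textbf{Item (d).} I copy the proof of Proposition~\ref{prop:direction-containment}. Two close lines through $p$ have alternating endpoints on $\partial P$, and $\sigma_P$ preserves cyclic order, so the transformed chords also have alternating endpoints. They therefore meet in $P$. Limits of such intersections, including the one-sided limits and the completion segments, lie in the closed convex set $P$. Any completion segment lies on a transformed chord between two points of $P$, so it too lies in $P$.

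\textbf{Item (f), the main computational step.} On a cell where $x_\ell\in L_i$ and $y_\ell\in L_j$, the chord through $p$ induces a projective correspondence between $L_i$ and $L_j$ (a perspectivity centred at $p$). The maps $\widetilde\sigma_i$ and $\widetilde\sigma_j$ are affine. Hence the endpoints of the transformed line are related by a projectivity $L_{i+n}\to L_{j+n}$. By Steiner's theorem, the lines joining corresponding points of two projectively related lines envelope a conic tangent to both lines. The conic degenerates to a point exactly when the projectivity is a perspectivity, for instance when $L_{i+n}\cap L_{j+n}$ is self-corresponding, which covers the adjacent case. The main obstacle is handling the degenerate cases cleanly: parallel $L_{i+n},L_{j+n}$ (opposite sides), and the point-degeneration criterion. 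I will treat these by writing the relation as an explicit bilinear equation in the two edge parameters and checking when its determinant vanishes. The closing sentence of (f) then follows, since away from cell transitions the envelope is smooth except at degenerate conic pieces.
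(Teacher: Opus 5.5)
Your proposal is correct and follows essentially the same route as the paper: fixed chords pass through $D_i$ by the parallel-side homothety, with completion segments on the great diagonals; the midpoint map is affine on each side; alternation of endpoints gives $\ART_P(p)\subset P$; and (f) comes from the projectivity $L_{i+n}\to L_{j+n}$ and the Steiner envelope. One small correction in (a): for a polygon the intersection map is onto only those unordered pairs not lying on a common closed side, since two points of one side span a supporting line rather than a secant; $\sigma_P$ maps each closed side onto the opposite closed side, so this subset is preserved and your conjugation argument goes through unchanged.
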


\begin{proof}
Part (a) follows from $\sigma_P^2=\operatorname{id}$ and from the~affine nature of \eqref{eq:sigma-p}.  A~fixed secant must have endpoints $x$ and $\sigma_P(x)$, which proves the~first statement in (b).

Fix an~edge pair $e_i,e_{i+n}$.  Every line joining
$x(s)=P_i+s(P_{i+1}-P_i)$
to
$\sigma_P(x(s))$
passes through $D_i$.  Indeed, if $D_i=(1-\lambda_i)P_i+\lambda_iP_{i+n}$, the~parallel-side homothety gives
$$
  D_i=(1-\lambda_i)x(s)+\lambda_i\sigma_P(x(s))
$$
for every $s$.  Thus the~dual of the~smooth edge piece is the~point $D_i$.  At the~vertex $P_i$, the~two adjacent one-sided dual points are $D_{i-1}$ and $D_i$, and the~completed dual contributes the~segment $D_{i-1}D_i$, which lies on the~great diagonal $d_i$.  The~union of these segments is exactly $\CSS(P)$.

On $e_i$ the~midpoint map is affine:
$$
  \frac{x(s)+\sigma_P(x(s))}{2}
  =
  (1-s)M_i+sM_{i+1}.
$$
This proves (c).

For (d), take two nearby chords of the~pencil through $p$.  Their endpoints alternate on $\partial P$.  The~map $\sigma_P$ preserves cyclic order, hence the~endpoints of the~reflected chords also alternate.  The~reflected chords therefore meet in $P$.  Passing to all one-sided limits and using convexity of $P$ proves the~inclusion.

If $P$ is centrally symmetric, \eqref{eq:sigma-p} becomes $\sigma_P(x)=2o-x$, and (e) follows.

Finally, the~pencil through $p$ induces a~projectivity between the~two side-lines containing the~endpoints of a~chord.  The~two edgewise maps in \eqref{eq:sigma-p} are projectivities.  Thus the~reflected endpoints on their two opposite side-lines are projectively related.  The~envelope of the~joins of projectively related points on two lines is a~conic, possibly degenerate.  This proves (f).
\end{proof}

\begin{figure}[ht]
  \centering
  \includegraphics[width=.44\linewidth]{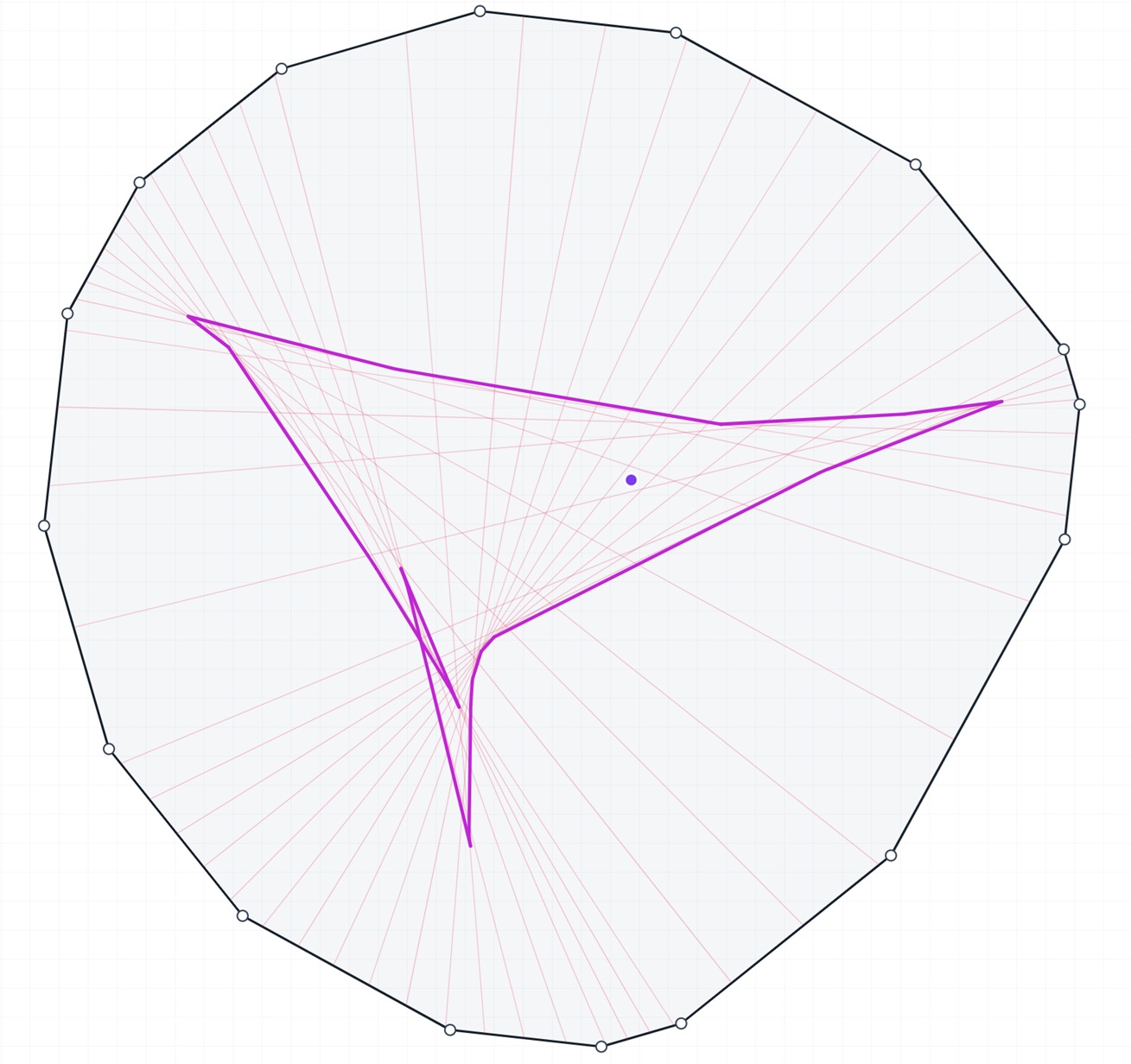}
  \includegraphics[width=.44\linewidth]{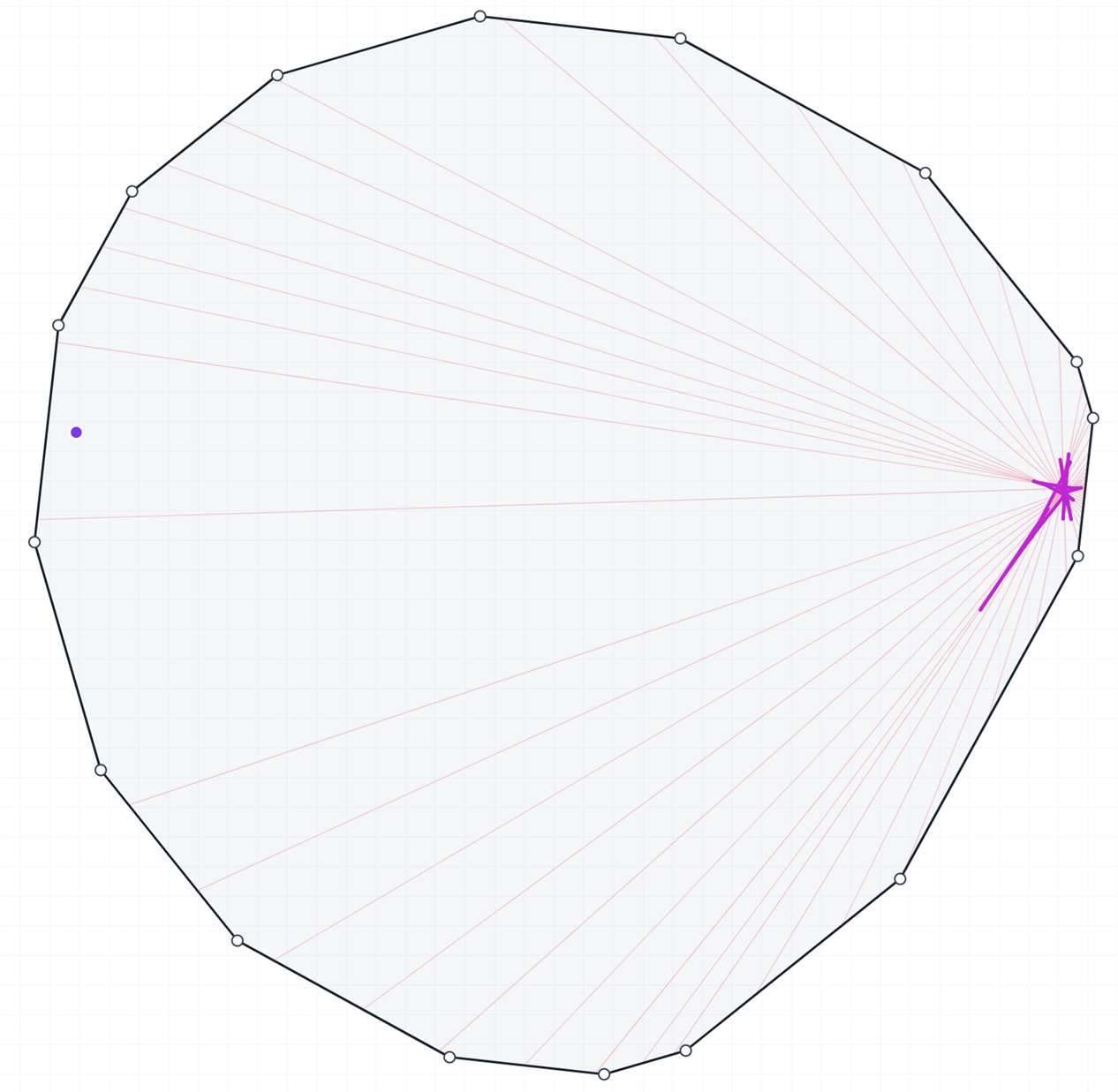}
  \includegraphics[width=.44\linewidth]{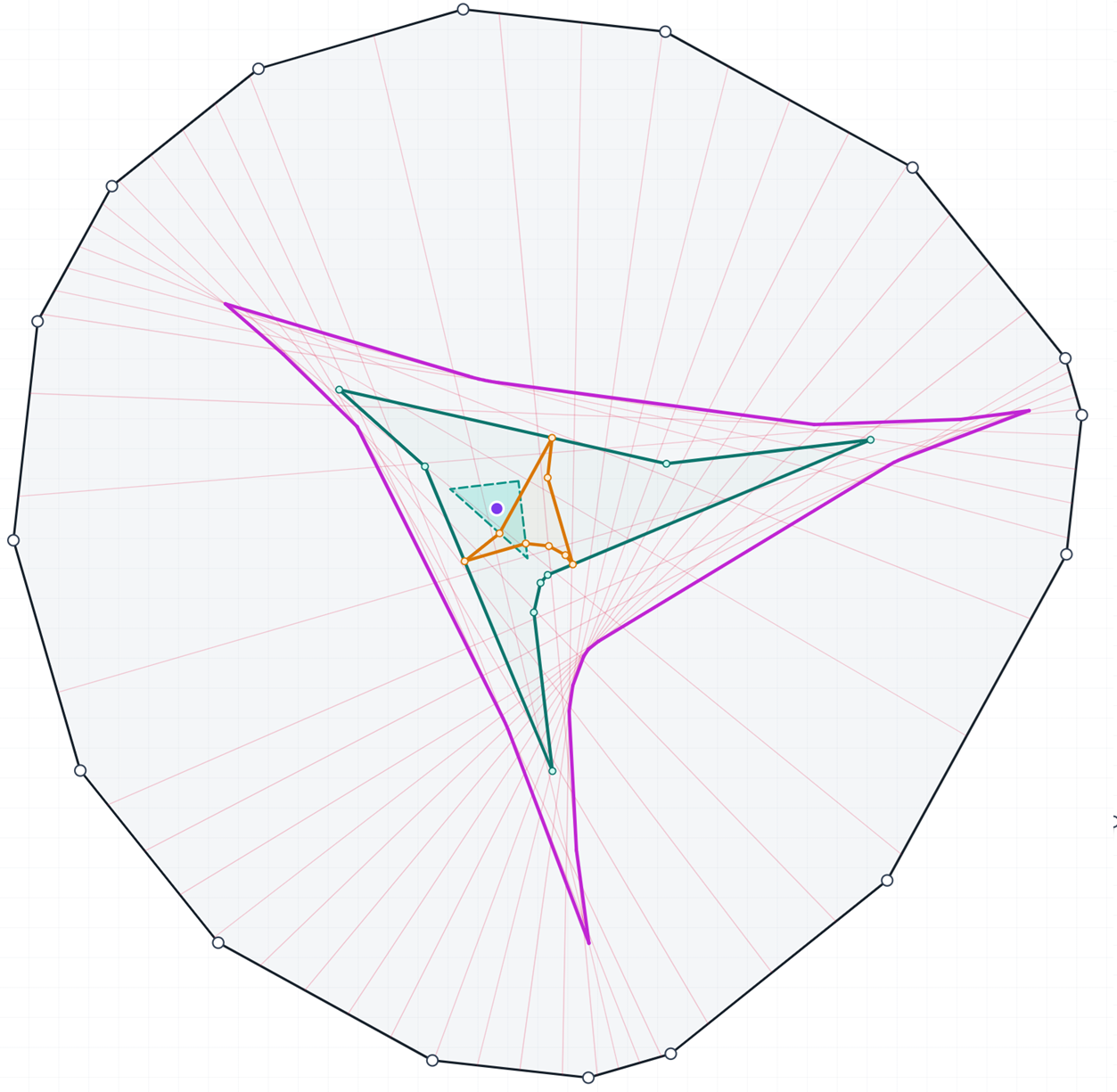}
  \includegraphics[width=.44\linewidth]{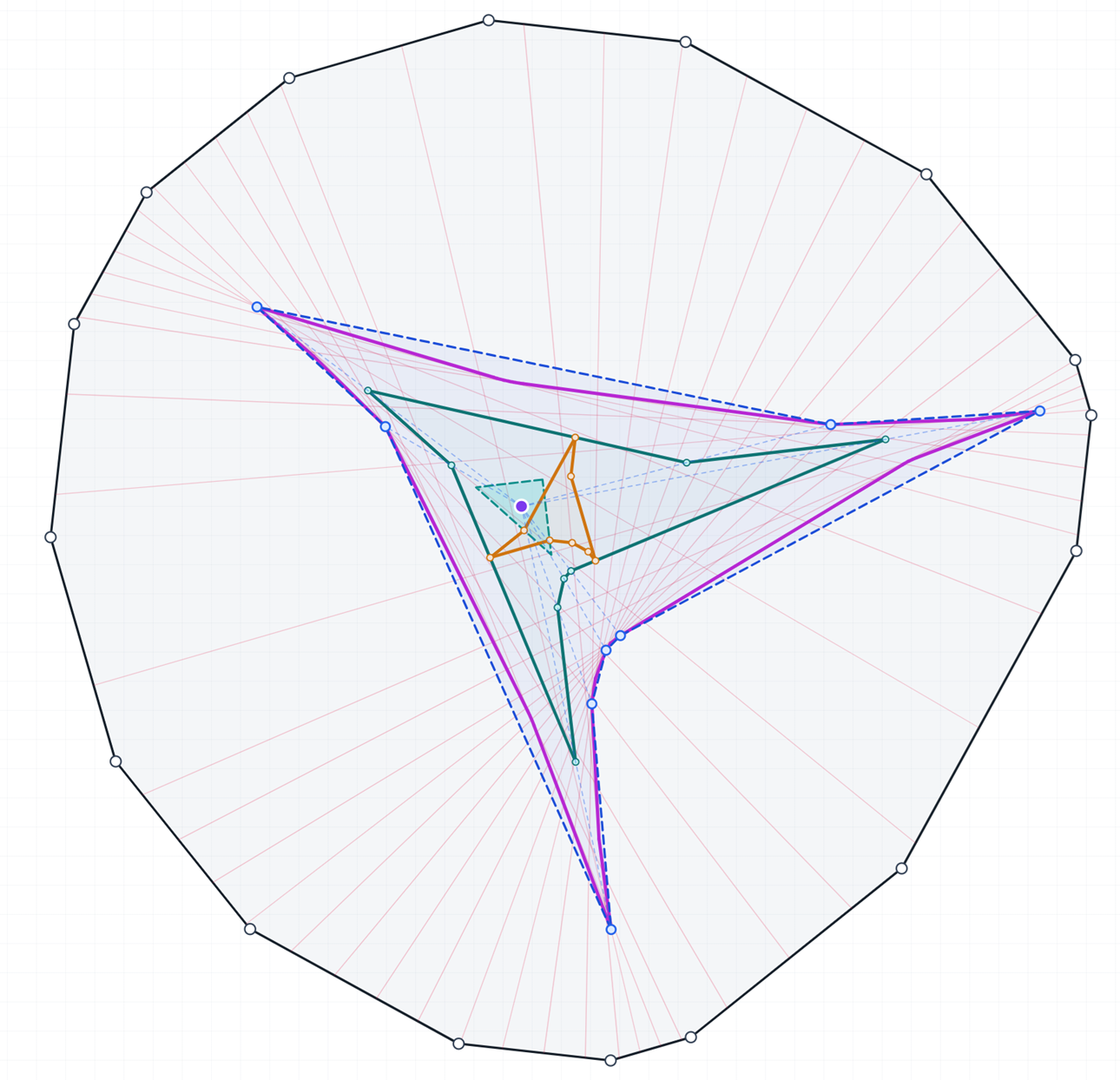}
  \caption{Top: $8$-gons with chosen basepoints and their discrete ARTs.
  Bottom: basepoints in the~kernel of the~centre symmetry set, with the
  Wigner caustic, centre symmetry set, and discrete ART.  Bottom right:
  the~points $Q_i(p)$ and the~polygon $\mathcal Q$ they determine}
  \label{fig:discrete-example}
\end{figure}

Figure~\ref{fig:discrete-example} displays the~polygonal fronts that enter the~exact cellwise analysis.
\subsection{Exact cellwise formula and affine conic types}

We now replace the~qualitative conic statement in Theorem~\ref{thm:discrete-properties}(f) by an~explicit formula.  For $u,v\in\R^2$, write $[u,v]=\det(u,v)$.
For an~affine point or vector, use the~homogeneous lifts:
$\widehat x=(x,1)$ and 
$\widehat v=(v,0)$.

\begin{theorem}\label{thm:exact-cellwise}
Fix a~cell of the~pencil through $p$ on which the~two endpoints of the~original chord lie in the~open sides $e_i$ and $e_j$.
Parametrise the~original endpoints by
$x(s)=P_i+se_i$ and 
$y(t)=P_j+te_j$.
Then
\begin{equation}\label{eq:t-of-s}
  t(s)
  =
  \frac{N_0+sN_1}{\Delta_0+s\Delta_1},
\end{equation}
where
$\Delta_0=[P_i-p,e_j]$,
$\Delta_1=[e_i,e_j]$,
$N_0=-[P_i-p,P_j-p]$, and 
$N_1=-[e_i,P_j-p]$.
Define
\begin{align*}
  u_0&=\widehat{P_{i+n}},
  &
  u_1&=\widehat{e_{i+n}},
  \\
  v_0&=\Delta_0\widehat{P_{j+n}}+N_0\widehat{e_{j+n}},
  &
  v_1&=\Delta_1\widehat{P_{j+n}}+N_1\widehat{e_{j+n}}.
\end{align*}
The~homogeneous coefficients of the~reflected line can be chosen as
\begin{equation}\label{eq:quadratic-dual-cell}
  \boldsymbol\ell(s)
  =
  (u_0+su_1)\times(v_0+sv_1)
  =
  \boldsymbol\ell_0+s\boldsymbol\ell_1
  +s^2\boldsymbol\ell_2,
\end{equation}
where
$\boldsymbol\ell_0=u_0\times v_0$,
$\boldsymbol\ell_1=u_0\times v_1+u_1\times v_0$, and 
$\boldsymbol\ell_2=u_1\times v_1$.
Consequently, at every regular value of $s$, the~exact envelope point is
\begin{equation}\label{eq:exact-cell-envelope}
  \widehat Z(s)
  =
  \boldsymbol\ell_0\times\boldsymbol\ell_1
  +2s\,\boldsymbol\ell_0\times\boldsymbol\ell_2
  +s^2\,\boldsymbol\ell_1\times\boldsymbol\ell_2.
\end{equation}
After division by the~third coordinate, this is a~rational parametrisation of a~conic or a~degeneration of a~conic.

If no line through $p$ contains two vertices of $P$, the~pencil has exactly $2n$ combinatorial cells.  Thus the~completed $\ART$ consists of at most $2n$ conic arcs and $2n$ transition segments.
\end{theorem}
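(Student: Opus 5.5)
The proof has four steps: solve the incidence condition for the other endpoint, transport both endpoints by the edgewise affine maps, expand the reflected line as a quadratic in $s$, and read off the envelope from \eqref{eq:dual-envelope}.

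\emph{Step 1: the other endpoint.} The chord through $x(s)$ and $y(t)$ passes through $p$ exactly when
$$
  [x(s)-p,\;y(t)-p]=0 .
$$
Substituting $x(s)-p=(P_i-p)+se_i$ and $y(t)-p=(P_j-p)+te_j$ gives an equation that is bilinear in $(s,t)$ and affine in $t$. Solving for $t$ yields exactly \eqref{eq:t-of-s} with the stated $\Delta_0,\Delta_1,N_0,N_1$. This is a one-line check of signs.

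\emph{Step 2: the reflected endpoints.} By \eqref{eq:edge-affine-extension}, the reflected endpoints are
$$
  \sigma_P(x(s))=P_{i+n}+se_{i+n},\qquad \sigma_P(y(t))=P_{j+n}+te_{j+n}.
$$
Their homogeneous lifts are $u_0+su_1$ and $\widehat{P_{j+n}}+t\,\widehat{e_{j+n}}$. Multiply the second lift by the nonzero scalar $\Delta_0+s\Delta_1$. This scalar is nonzero on the open cell, because it is the denominator of $t(s)$ and vanishes only when the line through $p$ is parallel to $e_j$. The rescaled lift is $v_0+sv_1$, so the projective points are unchanged.

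\emph{Step 3: the reflected line.} The join of the two points is the cross product of their lifts. Expanding it gives \eqref{eq:quadratic-dual-cell} together with the stated $\boldsymbol\ell_0,\boldsymbol\ell_1,\boldsymbol\ell_2$.

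\emph{Step 4: the envelope.} Apply \eqref{eq:dual-envelope}, using $\boldsymbol\ell'=\boldsymbol\ell_1+2s\boldsymbol\ell_2$ and expanding bilinearly:
\begin{align*}
  (\boldsymbol\ell_0+s\boldsymbol\ell_1+s^2\boldsymbol\ell_2)\times(\boldsymbol\ell_1+2s\boldsymbol\ell_2)
  &=\boldsymbol\ell_0\times\boldsymbol\ell_1+2s\,\boldsymbol\ell_0\times\boldsymbol\ell_2
  +2s^2\,\boldsymbol\ell_1\times\boldsymbol\ell_2\\
  &\quad+s^2\,\boldsymbol\ell_2\times\boldsymbol\ell_1 .
\end{align*}
Since $\boldsymbol\ell_2\times\boldsymbol\ell_1=-\boldsymbol\ell_1\times\boldsymbol\ell_2$, the $s^2$ terms combine to $s^2\,\boldsymbol\ell_1\times\boldsymbol\ell_2$, which gives \eqref{eq:exact-cell-envelope}. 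At a regular value the linear independence of $\boldsymbol\ell$ and $\boldsymbol\ell'$ guarantees that this vector is nonzero.

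The image of $s\mapsto\widehat Z(s)$ is a quadratic rational map of $\RP^1$. It is therefore a conic, or a degeneration of one: a line or a point, occurring when the three vectors are dependent or the parametrisation has a common factor. Dividing by the third coordinate gives the affine rational parametrisation.

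\emph{Cell count.} Rotating a line through $p$ by $\uppi$ carries each of its two endpoints once around $\partial P$. Each endpoint crosses each of the $2n$ vertices exactly once, giving $4n$ vertex crossings. Under the hypothesis that no line through $p$ contains two vertices, these crossings occur at $4n$ distinct lines of the pencil. However, the lines $\ell$ and $\ell$ rotated by $\uppi$ coincide, so the $4n$ crossings over a full turn are identified in pairs. Within one projective period there are therefore exactly $2n$ transition lines, and hence exactly $2n$ open cells.

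Each cell contributes one conic arc. Each transition is a corner of $\Gamma^*_{P,p}$, and by Definition~\ref{def:discrete-art} it contributes at most one segment. This gives the bound of at most $2n$ arcs and $2n$ segments.

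The main obstacle is this combinatorial count. The half-turn identification must be handled carefully so as not to double the number of cells. It must also be checked that every vertex crossing genuinely changes the side pair, which holds because each $P_i$ is a genuine vertex. The algebra in Steps 1–4 is routine.
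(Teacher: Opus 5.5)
Your proposal is correct and follows the paper's proof step for step. You solve the collinearity condition for $t$, rescale the second homogeneous endpoint by $\Delta_0+s\Delta_1$ (your check that this is non-zero on an open cell is a welcome addition), expand the cross product bilinearly, apply the projective envelope formula, and treat $(1,2s,s^2)$ as a Veronese conic composed with a linear map.

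One slip in the cell count: a half-turn carries each endpoint only halfway around $\partial P$, and your $4n$ crossings belong to a full turn. The paper's direct count is cleaner: each vertex $P_k$ gives exactly one transition line $\overline{pP_k}$, these $2n$ lines are distinct by hypothesis, and they cut $\RP^1$ into $2n$ arcs.
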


\begin{proof}
The~condition that $x(s),p,y(t)$ be collinear is
$[P_i-p+se_i,P_j-p+te_j]=0$.
Solving this equation for $t$ gives \eqref{eq:t-of-s}.  The~reflected endpoints are
$P_{i+n}+se_{i+n}$ and 
$P_{j+n}+t(s)e_{j+n}$. 
After multiplication of the~second homogeneous point by the~denominator in \eqref{eq:t-of-s}, these two endpoints are represented by $u_0+su_1$ and $v_0+sv_1$.  Their cross product gives \eqref{eq:quadratic-dual-cell}.

Differentiating \eqref{eq:quadratic-dual-cell} and using the~projective envelope formula \eqref{eq:dual-envelope}, we obtain
\begin{align*}
  \boldsymbol\ell(s)\times\boldsymbol\ell'(s)
  =
  \bigl(\boldsymbol\ell_0+s\boldsymbol\ell_1
  +s^2\boldsymbol\ell_2\bigr)
  \times
  \bigl(\boldsymbol\ell_1+2s\boldsymbol\ell_2\bigr)
  =
  \boldsymbol\ell_0\times\boldsymbol\ell_1
  +2s\,\boldsymbol\ell_0\times\boldsymbol\ell_2
  +s^2\,\boldsymbol\ell_1\times\boldsymbol\ell_2.
\end{align*}
This proves \eqref{eq:exact-cell-envelope}.  After homogenising the~parameter, $(1,2s,s^2)$ becomes the~quadratic Veronese parametrisation
$$
  \nu_2:\RP^1\longrightarrow\RP^2,
  \qquad
  [u:v]\longmapsto[u^2:2uv:v^2],
$$
whose image is the~non-degenerate conic
$X_1^2=4X_0X_2$; see \cite[pp.~10--38]{HarrisAG}.
Equation \eqref{eq:exact-cell-envelope} is obtained by composing $\nu_2$ with a~linear map.  If this map has rank three, its projective image is a~conic; if its rank drops, the~image degenerates to a~line
or a~point.  Propositions \ref{prop:cellwise-affine-type} and \ref{prop:opposite-pencil} below show that, for the~cell families considered here, the~only possible degeneration of the~envelope is
a~point.

Finally, a~cell transition occurs precisely when a~line of the~pencil passes through a~vertex of $P$.  Under the~stated genericity assumption, the~$2n$ vertex directions are distinct in $\RP^1$, and therefore divide the~pencil into $2n$ cells.  At a~transition line $\boldsymbol\ell_k$, let $\dot{\boldsymbol\ell}_k^-$ and $\dot{\boldsymbol\ell}_k^+$ denote the~two one-sided derivatives.  The~one-sided envelope points are
$\widehat Z_k^\pm=
\boldsymbol\ell_k
\times\dot{\boldsymbol\ell}_k^\pm$.
By Definition~\ref{def:discrete-art}, the~segment $[Z_k^-,Z_k^+]$ completes the~two adjacent conic arcs.
\end{proof}

\begin{remark}
\label{rem:art-not-polygon}
Put
$C_0=\boldsymbol\ell_0\times\boldsymbol\ell_1$,
$C_1=\boldsymbol\ell_0\times\boldsymbol\ell_2$, and 
$C_2=\boldsymbol\ell_1\times\boldsymbol\ell_2$.
On the~cell under consideration, $\widehat Z(s)=C_0+2sC_1+s^2C_2$.
If $\det(C_0,C_1,C_2)\neq0$, this is a~non-degenerate projective conic and its affine part is genuinely curved.  If the~determinant vanishes, the~projective correspondence between the~two distinct reflected side-lines is a~perspectivity. Propositions \ref{prop:cellwise-affine-type} and \ref{prop:opposite-pencil} show that the~envelope then degenerates to one point.  Thus a~completed polygonal $\ART$ is, in general, a~closed front made of curved conic arcs and straight transition segments. It is an~actual polygon only in the~exceptional situation in which every active conic piece degenerates to a~point. 
\end{remark}

\begin{proposition}
\label{prop:cellwise-affine-type}
Fix a~non-empty cell supported on $e_i$ and $e_j$, and suppose that
$L_i$ and $L_j$ are not parallel.  Let $x,y:\R^2\to\R$ be the~affine
functions uniquely determined by
\begin{align}\label{eq:cell-affine-coordinates}
  L_j&=\{x=0\},&L_{j+n}&=\{x=1\},
  &
  L_i&=\{y=0\},&L_{i+n}&=\{y=1\}.
\end{align}
Write $p=(u,v)$ in these coordinates. Then $0<u,v<1$.  Put
$S=L_i\cap L_j$, 
$S^*=L_{i+n}\cap L_{j+n}=(1,1)$,
and define $A,B\in\R$ by
\begin{equation}\label{eq:cell-AB-definition}
  \widetilde\sigma_i(S)=(1+A,1),
  \qquad
  \widetilde\sigma_j(S)=(1,1+B).
\end{equation}
Finally, set
\begin{equation}\label{eq:cell-beta}
  \beta_{ij}(p)
  =
  AB-A\mu_jv-B\mu_iu.
\end{equation}
This quantity is unchanged if $i$ and $j$ are interchanged.
Then the~cellwise envelope has exactly one of the~following affine
types:
\begin{enumerate}[(a)]
\item if $\beta_{ij}(p)>0$, it is an~arc of a~non-degenerate ellipse;
\item if $\beta_{ij}(p)<0$, it is an~arc of a~non-degenerate hyperbola;
\item if $\beta_{ij}(p)=0$, all reflected lines in the~cell are
concurrent at the~finite point $C_{ij}(p)$ determined by
\begin{equation}\label{eq:cell-concurrency-point}
  x\bigl(C_{ij}(p)\bigr)=1+A-\mu_i u,
  \qquad
  y\bigl(C_{ij}(p)\bigr)=1+B-\mu_j v,
\end{equation}
and the~cellwise envelope is that point.
\end{enumerate}
In particular, a~parabolic cell and a~one-dimensional degenerate
cellwise envelope cannot occur.  The~point-degeneration criterion has
the~coordinate-free form
\begin{equation}\label{eq:cell-point-geometric-criterion}
  [\widetilde\sigma_i^{-1}(S^*)-p,
  \widetilde\sigma_j^{-1}(S^*)-p]=0.
\end{equation}

\end{proposition}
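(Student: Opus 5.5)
The plan is to work entirely in the affine coordinates \eqref{eq:cell-affine-coordinates}. There the cellwise projectivity between the two reflected side-lines becomes a hyperbolic correspondence $a'b'=\text{const}$, and the conic type is read off from the quadratic part of the envelope equation. First I record the inequalities $0<u,v<1$: since $p\in\Int(P)$, it lies strictly between the parallel lines $L_j,L_{j+n}$ and strictly between $L_i,L_{i+n}$. Next I write down the edge maps. The line $L_i=\{y=0\}$ is parametrised by $(\xi,0)$ with $S=(0,0)$. Since $e_{i+n}=-\mu_ie_i$ and $e_i$ is a multiple of $(1,0)$, the affine extension \eqref{eq:edge-affine-extension} is
$$
\widetilde\sigma_i(\xi,0)=(1+A-\mu_i\xi,1),
\qquad
\widetilde\sigma_j(0,\eta)=(1,1+B-\mu_j\eta).
$$
A chord through $p$ meeting $L_i$ at $(\xi,0)$ and $L_j$ at $(0,\eta)$ satisfies the intercept relation $u/\xi+v/\eta=1$.

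Put $a=A-\mu_i\xi$ and $b=B-\mu_j\eta$, so that the reflected endpoints are $(1+a,1)$ and $(1,1+b)$. Substituting into the intercept relation and expanding should give
$$
ab+a(\mu_jv-B)+b(\mu_iu-A)+\beta_{ij}(p)=0 .
$$
With $\alpha=\mu_iu-A$ and $\gamma=\mu_jv-B$, completing the product yields
$$
(a+\alpha)(b+\gamma)=\alpha\gamma-\beta_{ij}(p)=\mu_i\mu_juv=:k>0 .
$$
I then translate the origin to $C_{ij}(p)=(1-\alpha,1-\gamma)$, which is exactly \eqref{eq:cell-concurrency-point}. In the new coordinates the reflected endpoints are $(a',\gamma)$ and $(\alpha,k/a')$ with $a'=a+\alpha$. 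The symmetry of $\beta_{ij}$ under $i\leftrightarrow j$ is visible directly from \eqref{eq:cell-beta}, since the roles of $(A,\mu_i,u)$ and $(B,\mu_j,v)$ are swapped.

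The reflected line is
$$
(k/a'-\gamma)(X-a')-(\alpha-a')(Y-\gamma)=0 .
$$
Multiplying by $a'$ turns it into a quadratic in $a'$:
$$
a'^2Y-a'(k+\gamma X+\alpha Y-\alpha\gamma)+kX=0 .
$$
\textbf{Case $\beta_{ij}(p)=0$.} Evaluating at $X=Y=0$ gives $k-\alpha\gamma=-\beta_{ij}(p)$, so every line passes through the origin, that is, through $C_{ij}(p)$. This proves (c).

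\textbf{Case $\beta_{ij}(p)\neq0$.} The envelope is the discriminant locus
$$
(k+\gamma X+\alpha Y-\alpha\gamma)^2=4kXY .
$$
Its quadratic part $(\gamma X+\alpha Y)^2-4kXY$ has binary discriminant
$$
(2\alpha\gamma-4k)^2-4\alpha^2\gamma^2=16k(k-\alpha\gamma)=-16k\beta_{ij}(p).
$$
Since $k>0$, this form is definite when $\beta_{ij}>0$ (no real asymptotic directions, so an ellipse) and indefinite when $\beta_{ij}<0$ (a hyperbola). It never vanishes off $\beta_{ij}=0$, so no parabola can occur.

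The remaining check is that this conic is non-degenerate when $\beta_{ij}\neq0$, which I expect to be the only real obstacle. I would compute the $3\times3$ determinant of the conic, and I expect it to be a non-zero multiple of $k^2\beta_{ij}(p)$. Equivalently, one can argue via Remark~\ref{rem:art-not-polygon}: the correspondence $a'\mapsto k/a'$ between two distinct lines is a perspectivity only when the line joining $X'$ and $Y'$ through their common point corresponds to itself. That happens exactly when all joins are concurrent, which is excluded. Either route also rules out a one-dimensional degenerate envelope.

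Finally, for the coordinate-free form \eqref{eq:cell-point-geometric-criterion}, I solve $\widetilde\sigma_i(\xi,0)=(1,1)$ and $\widetilde\sigma_j(0,\eta)=(1,1)$ to get
$$
\widetilde\sigma_i^{-1}(S^*)=(A/\mu_i,0),
\qquad
\widetilde\sigma_j^{-1}(S^*)=(0,B/\mu_j).
$$
The bracket
$$
\left[(A/\mu_i-u,-v),(-u,B/\mu_j-v)\right]
$$
then expands to $\beta_{ij}(p)/(\mu_i\mu_j)$. Brackets of differences of points are multiplied by a constant non-zero factor under an affine change of coordinates, so the vanishing condition is intrinsic.
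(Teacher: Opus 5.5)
Your proposal is correct and follows essentially the paper's own proof: the same affine coordinates, elimination of the intercepts to a projectivity of determinant $-\mu_i\mu_juv$, the envelope as the discriminant of a quadratic in the line parameter, and the sign of the binary discriminant of its quadratic part. The paper centres at $S^*$ rather than at $C_{ij}(p)$, which only makes your case (c) more immediate. The determinant you left as an expectation is in fact $-4k^2\beta_{ij}(p)^2$, a multiple of $\beta_{ij}^2$ rather than $\beta_{ij}$; this still gives non-degeneracy exactly when $\beta_{ij}(p)\neq0$, and the real envelope points exclude an imaginary ellipse.
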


\begin{proof}
Use $(x,y)$ as affine coordinates on the~original plane and centre the
output coordinates at $S^*$ by writing
$X=x-1$ and 
$Y=y-1$.
The~two original side-lines are the~coordinate axes.  If their points
are $(r,0)$ and $(0,t)$, respectively, collinearity with $p=(u,v)$ is
equivalent to
\begin{equation}\label{eq:cell-intercept-projectivity}
  t=\frac{vr}{r-u}.
\end{equation}
By \eqref{eq:global-mu}, \eqref{eq:edge-affine-extension}, and
\eqref{eq:cell-AB-definition}, the~centred coordinates of the~two
reflected endpoints are
$(\xi,0)=(A-\mu_i r,0)$ and 
$(0,\eta)=(0,B-\mu_j t)$.
Eliminating $r,t$ with \eqref{eq:cell-intercept-projectivity} gives the
projectivity
\begin{equation}\label{eq:cell-projectivity-normal-form}
  \eta=\frac{a\xi+b}{c\xi+d},
\end{equation}
where
\begin{equation}\label{eq:cell-abcd}
  a=\mu_jv-B,
  \qquad
  b=\beta_{ij}(p),
  \qquad
  c=-1,
  \qquad
  d=A-\mu_iu.
\end{equation}
Its determinant is
\begin{equation}\label{eq:cell-projectivity-determinant}
  \Delta=ad-bc=-\mu_i\mu_juv<0.
\end{equation}

The~line joining $(\xi,0)$ to $(0,\eta)$ has equation
$\eta X+\xi Y-\xi\eta=0$.
Assume first that $b\neq0$.  After substituting
\eqref{eq:cell-projectivity-normal-form}, a~point $(X,Y)$ lies on one
of the~generating lines precisely when
\begin{equation}\label{eq:cell-tangent-quadratic}
  (cY-a)\xi^2+(aX+dY-b)\xi+bX=0.
\end{equation}
The~envelope is obtained when this quadratic has a~double root, hence
it has equation
\begin{equation}\label{eq:cell-conic-equation}
  (aX+dY-b)^2-4bX(cY-a)=0.
\end{equation}
The~determinant of the~symmetric matrix of the~homogenised conic is
$-4b^2\Delta^2$, so the~conic is non-degenerate.  On the~line at
infinity its equation is
$$
  (aX+dY)^2-4bcXY=0,
$$
whose binary discriminant is
$-16bc\Delta=16b\Delta$.
Since $\Delta<0$, this discriminant is negative for $b>0$ and positive
for $b<0$.  The~conic is therefore an~ellipse in the~first case and a
hyperbola in the~second. It is a~real conic because its real tangent
points are supplied by the~cellwise envelope parametrisation.  A~parabola would require the~discriminant to
vanish, which is impossible when $b\Delta\neq0$.

If $b=0$, then $\Delta=ad\neq0$.  Cancelling the~common factor $\xi$
in \eqref{eq:cell-tangent-quadratic}, first for $\xi\neq0$ and then by
continuity at the~omitted parameter, shows that the~generating lines are
$$
  (aX+dY)+\xi(cY-a)=0.
$$
They all pass through the~finite point
$(X,Y)=(d,-a)$,
because $c=-1$.  In the~original coordinates this is exactly
\eqref{eq:cell-concurrency-point}, so their envelope is a~point.

Finally, $\widetilde\sigma_i^{-1}(S^*)$ and
$\widetilde\sigma_j^{-1}(S^*)$ have
coordinates $(A/\mu_i,0)$ and $(0,B/\mu_j)$, respectively.  Their
collinearity with $(u,v)$ is equivalent to
$$
  AB-A\mu_jv-B\mu_iu=0,
$$
which proves \eqref{eq:cell-point-geometric-criterion} and completes
the~classification.  Interchanging $i$ and $j$ swaps
$(x,u,A,\mu_i)$ with $(y,v,B,\mu_j)$, so it leaves
\eqref{eq:cell-beta} unchanged.
\end{proof}

\begin{proposition}
\label{prop:adjacent-pencil}
Suppose that a~cell is supported on the~adjacent sides $e_i$ and
$e_{i+1}$.  Let $T_i:\R^2\to\R^2$ be the~unique affine map defined by
\begin{equation}\label{eq:adjacent-affine-map}
  T_i(P_{i+1}+r e_i+s e_{i+1})
  =
  P_{i+n+1}+r e_{i+n}+s e_{i+n+1}.
\end{equation}
Then every reflected line belonging to this cell passes through
$T_i(p)$.  Consequently, the~corresponding conic piece degenerates to
the~single point $T_i(p)$.
\end{proposition}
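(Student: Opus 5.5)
The plan is to verify directly that $T_i$ intertwines the edgewise involution on the two adjacent sides. Then the reflected line of every chord in the cell is the $T_i$-image of the original chord, and it passes through $T_i(p)$.

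\textbf{Step 1: $T_i$ restricts to $\widetilde\sigma_i$ and $\widetilde\sigma_{i+1}$.} The map $T_i$ in \eqref{eq:adjacent-affine-map} is a well-defined affine map because $e_i,e_{i+1}$ are linearly independent, the vertices being genuine. We check its action on the two side-lines.

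On $L_{i+1}$, a point is $P_{i+1}+se_{i+1}$. Its image is $P_{i+n+1}+se_{i+n+1}$, which is exactly $\widetilde\sigma_{i+1}(P_{i+1}+se_{i+1})$ by \eqref{eq:edge-affine-extension}.

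On $L_i$, a point $P_i+\tilde s e_i$ can be written as $P_{i+1}+(\tilde s-1)e_i$, since $P_i=P_{i+1}-e_i$. Hence
$$
  T_i(P_i+\tilde se_i)=P_{i+n+1}+(\tilde s-1)e_{i+n}=P_{i+n}+\tilde se_{i+n},
$$
using $P_{i+n+1}-e_{i+n}=P_{i+n}$. This is $\widetilde\sigma_i(P_i+\tilde se_i)$.

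Thus $T_i|_{L_i}=\widetilde\sigma_i$ and $T_i|_{L_{i+1}}=\widetilde\sigma_{i+1}$. On the closed sides these agree with $\sigma_P$ by Definition~\ref{def:sigma-p}.

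\textbf{Step 2: every reflected line in the cell passes through $T_i(p)$.} Take a line $\ell$ of the cell, with endpoints $x_\ell\in e_i$ and $y_\ell\in e_{i+1}$. By \eqref{eq:discrete-line-map},
$$
  \Phi_P(\ell)=\overline{\sigma_P(x_\ell)\sigma_P(y_\ell)}=\overline{T_i(x_\ell)\,T_i(y_\ell)}=T_i(\ell).
$$
The last equality holds because an affine bijection maps lines to lines. Since $p\in\ell$, we get $T_i(p)\in\Phi_P(\ell)$.

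Therefore all reflected lines of the cell lie in the pencil $\Lambda_{T_i(p)}$. By Proposition~\ref{prop:direction-containment}, or its discrete analogue, these lines turn strictly, so they form a nonconstant family. The envelope of a nonconstant subfamily of a pencil is its centre: \eqref{eq:dual-envelope} gives $\boldsymbol\ell\times\boldsymbol\ell'\propto\widehat{T_i(p)}$ wherever $\boldsymbol\ell'\not\parallel\boldsymbol\ell$. So the conic piece degenerates to the single point $T_i(p)$.

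\textbf{Main obstacle.} The only real subtlety is the index bookkeeping in Step 1, namely that the base point $P_{i+1}$ serves both adjacent sides consistently. An alternative route is to verify \eqref{eq:cell-point-geometric-criterion} via Proposition~\ref{prop:cellwise-affine-type}. There $S=P_{i+1}$, and $\widetilde\sigma_i^{-1}(S^*)=\widetilde\sigma_{i+1}^{-1}(S^*)=P_{i+1}$, so the bracket in \eqref{eq:cell-point-geometric-criterion} vanishes trivially. The direct argument above avoids this and also identifies the concurrency point explicitly as $T_i(p)$.
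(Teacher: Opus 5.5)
Your proof is correct and is essentially the paper's argument. In both, $T_i$ restricts to $\widetilde\sigma_i$ on $L_i$ and to $\widetilde\sigma_{i+1}$ on $L_{i+1}$, so each reflected line in the cell is the $T_i$-image of the original chord and therefore contains $T_i(p)$. The paper also records your alternative check through Proposition~\ref{prop:cellwise-affine-type}, with $S=P_{i+1}$, $S^*=P_{i+n+1}$ and $A=B=0$.
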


\begin{proof}
The~two consecutive edge directions are linearly independent, so
$T_i$ is a~well-defined nonsingular affine map.  Its restrictions to
$L_i$ and $L_{i+1}$ are precisely $\widetilde\sigma_i$ and
$\widetilde\sigma_{i+1}$.
Therefore, if a~line through $p$ has its endpoints on these two sides,
the~reflected line is its image under $T_i$ and hence contains
$T_i(p)$.  This also follows from Proposition
\ref{prop:cellwise-affine-type}: here $S=P_{i+1}$,
$S^*=P_{i+n+1}$, and $A=B=0$.
\end{proof}

\begin{remark}
The~adjacent-side case is automatic, but it is not the~only way a
nonparallel cell can degenerate.  For a~fixed pair $(i,j)$, put
$U_i=\widetilde\sigma_i^{-1}(S^*)$ and 
$U_j=\widetilde\sigma_j^{-1}(S^*)$.
If $U_i\neq U_j$, condition
\eqref{eq:cell-point-geometric-criterion} says that, among the
basepoints for which the~$(i,j)$-cell is non-empty, the~exceptional
ones are exactly those on the~fixed line $U_iU_j$.  If $U_i=U_j$,
the~condition holds for every basepoint for which the~cell is
non-empty.  The~latter occurs
for adjacent sides and, for every side pair, when $P$ is centrally
symmetric. In the~centrally symmetric case all cellwise points equal
$2o-p$.
\end{remark}

\begin{remark}\label{rem:both-affine-types}
Take the~$\CPPOS$ with vertices
\begin{align*}
  &(0,0),(1,0),(2,1),(2,2),(1,3),(0,3),
  \left(-\frac75,\frac85\right),
  \left(-\frac75,\frac75\right)
\end{align*}
and the~interior basepoint $p=(3/25,9/20)$.  The~cells supported on
$(e_2,e_7)$ and $(e_0,e_6)$ are both non-empty: chords through $p$
are obtained with the~respective pairs of open-side parameters
$(1/5,209/263)$ and $(3/4,9/28)$.  The~data
$(A,B,u,v;\mu_i,\mu_j)$ from Proposition
\ref{prop:cellwise-affine-type} are, respectively,
$$
  \left(-\frac45,-\frac27,\frac{57}{400},\frac{47}{85};
  \frac15,\frac57\right)
  \quad\text{and}\quad
  \left(\frac2{17},2,\frac{38}{85},\frac3{20};1,5\right).
$$
Thus \eqref{eq:cell-beta} gives
$$
  \beta_{2,7}(p)=\frac{65769}{119000}>0,
  \qquad
  \beta_{0,6}(p)=-\frac{127}{170}<0.
$$
Thus the~first cell is elliptic and the~second is hyperbolic.
Figure~\ref{fig:cellwise-affine-types} shows the~two pencil cells and
highlights precisely the~corresponding cellwise envelope arcs.
\end{remark}

\begin{figure}[ht]
  \centering
  \includegraphics[width=.97\linewidth]{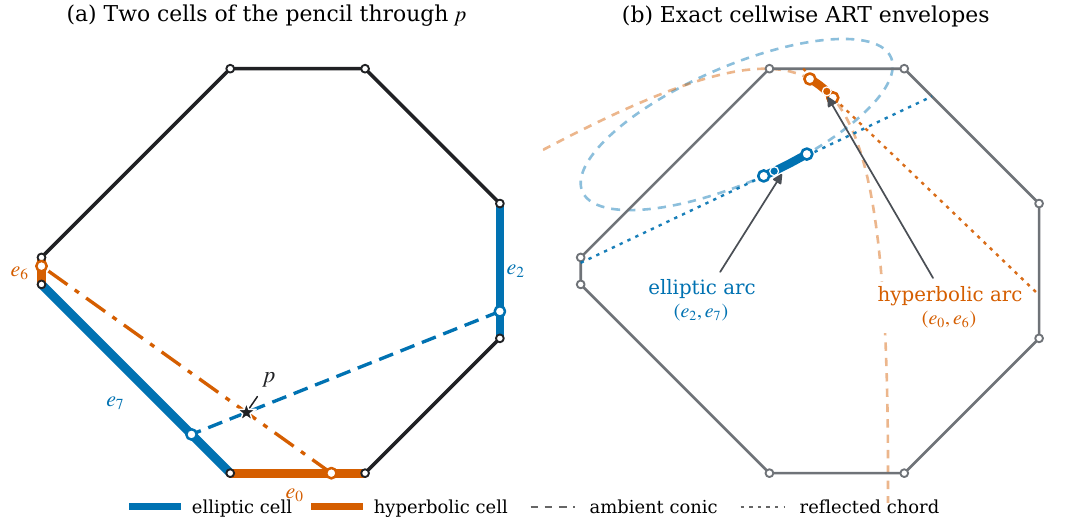}
  \caption{The~two cells from Remark \ref{rem:both-affine-types} and
  their elliptic and hyperbolic $\ART$ arcs.  Dashed continuations in the
  right panel are the~ambient conics}
  \label{fig:cellwise-affine-types}
\end{figure}

The~parallel opposite-side case has a~different normal form and gives
another unavoidable point degeneration.

\begin{proposition}\label{prop:opposite-pencil}
Suppose that the~cell in Theorem~\ref{thm:exact-cellwise} is supported on $e_i$ and $e_{i+n}$.  Then $D_i=(1-\lambda_i)P_i+\lambda_iP_{i+n}$.
Let $\tau_i:\R^2\to\R$ be the~affine function equal to $0$ on the~line containing $e_i$ and to $1$ on the~line containing $e_{i+n}$.  Then all reflected lines belonging to this cell pass through the~single point
\begin{equation}\label{eq:Qi-mu}
  Q_i(p)
  =
  D_i-
  \frac{\mu_i}
  {1+(\mu_i^2-1)\tau_i(p)}
  (p-D_i).
\end{equation}
Equivalently,
\begin{equation}\label{eq:Qi-lambda}
  Q_i(p)
  =
  D_i-
  \frac{\lambda_i(1-\lambda_i)}
  {\lambda_i^2+(1-2\lambda_i)\tau_i(p)}
  (p-D_i).
\end{equation}
Thus the~corresponding conic piece degenerates to the~point $Q_i(p)$.

The~map
$R_i:p\mapsto Q_i(p)$
extends to a~projective involution of $\RP^2$.  Every projective line through $D_i$ is invariant under $R_i$.  For $p\in\Int(P)$, the~point $D_i$ lies between $p$ and $Q_i(p)$.
\end{proposition}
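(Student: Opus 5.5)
The plan is to work in affine coordinates adapted to the two parallel sides, and to observe that on each of these sides the boundary involution is a single homothety centred at $D_i$. Take $D_i$ as origin, measure position along the direction of $e_i$, and use $\tau_i$ as the transverse coordinate. Since $D_i=(1-\lambda_i)P_i+\lambda_iP_{i+n}$, we have $\tau_i(D_i)=\lambda_i$. Let $H$ be the homothety with centre $D_i$ and ratio $-\mu_i$. It sends $\{\tau_i=0\}$ to $\{\tau_i=\lambda_i+\mu_i\lambda_i\}=\{\tau_i=1\}$, because $\lambda_i(1+\mu_i)=1$. It sends $P_i$ to $P_{i+n}$, because $D_i$ divides $P_iP_{i+n}$ in ratio $\lambda_i:(1-\lambda_i)=1:\mu_i$, and it sends $e_i$ to $-\mu_ie_i=e_{i+n}$. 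Hence $\widetilde\sigma_i=H|_{L_i}$ and $\widetilde\sigma_{i+n}=H^{-1}|_{L_{i+n}}$. The first claim of the statement is the computation just given: the identity $D_i=(1-\lambda_i)P_i+\lambda_iP_{i+n}$ is exactly what makes $D_i$ the centre of this homothety, and the proof of Theorem~\ref{thm:discrete-properties}(b) already uses it.

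Next I will compute the reflected lines. Write the original endpoints as $x=(a,-\lambda_i)$ and $y=(b,1-\lambda_i)$ in these coordinates, and put $\tau=\tau_i(p)$. Collinearity with $p$ gives $p=(1-\tau)x+\tau y$. The reflected endpoints are $H^{-1}(y)=(-b/\mu_i,-\lambda_i)$ on $L_i$ and $H(x)=(-\mu_ia,1-\lambda_i)$ on $L_{i+n}$. A point $(1-\sigma)H^{-1}(y)+\sigma H(x)$ of the reflected chord has first coordinate $\sigma\mu_i a+(1-\sigma)b/\mu_i$, up to sign. This is a fixed multiple of $p_1=(1-\tau)a+\tau b$, independently of the chord, precisely when $\sigma\mu_i^2\tau=(1-\sigma)(1-\tau)$, that is, when
$$
  \sigma=\frac{1-\tau}{1+(\mu_i^2-1)\tau}.
$$
The resulting multiplier is $-\mu_i/(1+(\mu_i^2-1)\tau)$. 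I will then check that the height $-\lambda_i+\sigma$ equals the same multiple of $\tau-\lambda_i$, using $\mu_i\lambda_i=1-\lambda_i$; this is routine algebra. Together these give \eqref{eq:Qi-mu}, so every reflected line of the cell passes through $Q_i(p)$. Substituting $\mu_i=(1-\lambda_i)/\lambda_i$ and multiplying numerator and denominator by $\lambda_i^2$ turns this into \eqref{eq:Qi-lambda}.

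For the final claims, write $R_i(p)-D_i=-c(p)(p-D_i)$, where $c(p)=\mu_i/(1+(\mu_i^2-1)\tau_i(p))$ and the denominator is an affine function of $p$. In homogeneous coordinates centred at $D_i$, this is the linear map $(w,1)\mapsto(-\mu_iw,\,1+(\mu_i^2-1)\tau_i)$, so $R_i$ extends to a projectivity of $\RP^2$. Lines through $D_i$ are invariant, since $R_i(p)-D_i$ is parallel to $p-D_i$.

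The step I expect to be the main obstacle is proving that $R_i$ is an involution. The cleanest route is to verify directly that the $3\times3$ matrix squares to a scalar matrix; the key identity needed is $\tau_i(Q)-\lambda_i=-c(p)(\tau_i(p)-\lambda_i)$. A fallback is the geometric argument: if $\Phi_P(\Lambda_p)\subset\Lambda_{Q}$ on this cell, then $\Phi_P^2=\operatorname{id}$ gives $\Phi_P(\Lambda_Q)\subset\Lambda_p$ on the corresponding cell, so $R_i(Q)=p$ on an open set, and projectivity extends this to all of $\RP^2$. Finally, for $p\in\Int(P)$ we have $0<\tau<1$, so $1+(\mu_i^2-1)\tau=(1-\tau)+\mu_i^2\tau>0$ and $c(p)>0$. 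Hence $Q_i(p)-D_i$ is a negative multiple of $p-D_i$, which means $D_i$ lies between $p$ and $Q_i(p)$.
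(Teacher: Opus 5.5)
Your proposal is correct, and it organises the argument differently from the paper. The paper normalises $P_i=(0,0)$, $e_i=(1,0)$, $P_{i+n}=(c,1)$ and solves the incidence equation for $t(s)$. It then checks by direct substitution that every reflected line contains the explicit point \eqref{eq:Q-coordinate}, and only afterwards rewrites this point relative to $D_i$. Involutivity is checked through $\tau_i(R_i(p))=(1-\tau_i(p))/\Delta(p)$ and $\Delta(R_i(p))=\mu_i^2/\Delta(p)$.

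You start instead from the observation that $\widetilde\sigma_i=H|_{L_i}$ and $\widetilde\sigma_{i+n}=H^{-1}|_{L_{i+n}}$, where $H$ is the homothety with centre $D_i$ and ratio $-\mu_i$. This is the ``parallel-side homothety'' the paper mentions in the proof of Theorem~\ref{thm:discrete-properties}(b) but does not exploit here, and you work in coordinates centred at $D_i$. Two things follow:
\begin{itemize}
\item You derive the concurrency point, via the chord-independent interpolation parameter $\sigma$, rather than guessing and verifying it.
\item The point comes out directly in the form $D_i-c(p)(p-D_i)$. Hence collinearity with $p$ and $D_i$, invariance of lines through $D_i$, and the betweenness statement are immediate.
\end{itemize}
The paper's version is a shorter bare verification; yours explains why $Q_i(p)$ lies on the line $pD_i$.

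Two small completions.

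First, the involution step. Besides $\tau_i(Q)-\lambda_i=-c(p)(\tau_i(p)-\lambda_i)$, which is automatic from the formula, the identity you actually need is $1+(\mu_i^2-1)\lambda_i=\mu_i$. With it, your matrix in coordinates centred at $D_i$ is $\left(\begin{smallmatrix}-\mu_i&0&0\\0&-\mu_i&0\\0&\mu_i^2-1&\mu_i\end{smallmatrix}\right)$. Its square is $\mu_i^2$ times the identity, which is exactly the paper's relation $\Delta(R_i(p))\Delta(p)=\mu_i^2$.

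Second, the first claim. Since you use the formula for $D_i$ to define $H$, state it the other way round to avoid circularity. The homothety taking $P_i\mapsto P_{i+n}$ and $P_{i+1}\mapsto P_{i+n+1}$ exists because $e_{i+n}=-\mu_ie_i$ with $-\mu_i\neq1$. Its centre lies on both great diagonals $d_i$ and $d_{i+1}$, hence is $D_i$. Then $P_{i+n}-D_i=-\mu_i(P_i-D_i)$ gives $D_i=(1-\lambda_i)P_i+\lambda_iP_{i+n}$.
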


\begin{proof}
Apply an~affine change of coordinates taking the~two side-lines to $y=0$ and $y=1$.  We may write
$P_i=(0,0)$,
$e_i=(1,0)$,
$P_{i+n}=(c,1)$, and 
$e_{i+n}=(-\mu_i,0)$.
Then $\tau_i(x,y)=y$ and
$$
  D_i=
  \left(\frac{c}{1+\mu_i},\frac{1}{1+\mu_i}\right).
$$
Let $p=(u,v)$, where $0<v<1$.  A~chord with endpoints
$x(s)=(s,0)$ and 
$y(t)=(c-\mu_it,1)$
passes through $p$ precisely when
$$
  t(s)
  =
  \frac{(1-v)s+vc-u}{\mu_iv}.
$$
Its reflected line joins $(t(s),0)$ to $(c-\mu_is,1)$.  Direct substitution shows that every such line contains
\begin{equation}\label{eq:Q-coordinate}
  \left(
  \frac{c(1+(\mu_i-1)v)-\mu_iu}
  {1+(\mu_i^2-1)v},
  \frac{1-v}{1+(\mu_i^2-1)v}
  \right).
\end{equation}
Rewriting \eqref{eq:Q-coordinate} relative to $D_i$ gives \eqref{eq:Qi-mu}. Equation \eqref{eq:Qi-lambda} follows from \eqref{eq:lambda-i}.

The~numerator and denominator in \eqref{eq:Qi-mu} are affine-linear in $p$, so $R_i$ extends projectively.  Formula \eqref{eq:Qi-mu} also shows that $p,D_i,R_i(p)$ are collinear.  If
$\Delta(p)=1+(\mu_i^2-1)\tau_i(p)$,
then
$$
  \tau_i(R_i(p))
  =
  \frac{1-\tau_i(p)}{\Delta(p)}
  \quad\text{and}\quad
  \Delta(R_i(p))
  =
  \frac{\mu_i^2}{\Delta(p)}.
$$
A~second application of \eqref{eq:Qi-mu} therefore returns $p$, proving $R_i^2=\operatorname{id}$.  Finally, for $0<\tau_i(p)<1$ one has $\Delta(p)>0$, and
$$
  Q_i(p)-D_i
  =
  -\frac{\mu_i}{\Delta(p)}(p-D_i).
$$
The~coefficient is negative, so $D_i$ lies between the~two points.
\end{proof}

\begin{remark}\label{rem:opposite-similarity}
If $\mu_i=1$, then $Q_i(p)=2D_i-p$.
For a~centrally symmetric $\CPPOS$ all $D_i$ coincide with the~centre $o$, and the~formula reduces to $\ART_P(p)=\{2o-p\}$, consistently with Theorem~\ref{thm:discrete-properties}(e).

The~points $Q_i(p)$ give a~precise discrete relation between the~$\ART$ and CSS: whenever the~opposite-side cell is non-empty, its distinguished $\ART$ point lies on the~line $pD_i$ through the~corresponding $\CSS$ vertex.  There is generally no single affine map taking the~entire $\CSS$ to the~ART.  For a~non-opposite cell, the~full positions and directions of the~four relevant sides enter \eqref{eq:exact-cell-envelope}. Their lengths alone do not determine the~conic.
\end{remark}

\begin{corollary}
\label{cor:wigner-opposite-point}
Let $x(s)=P_i+se_i$, $0<s<1$,
and suppose that
$$
  p=\frac12\bigl(x(s)+\sigma_P(x(s))\bigr)
  \in[M_i,M_{i+1}]
  \subset\AreaEvolute(P).
$$
Then $\tau_i(p)=1/2$ and
\begin{equation}\label{eq:Qi-on-E05}
  Q_i(p)
  =
  D_i-\frac{2\mu_i}{1+\mu_i^2}(p-D_i).
\end{equation}
The~line through $x(s)$ and $\sigma_P(x(s))$ contains $p,D_i$, and $Q_i(p)$ and is a~common generating line of $\ART_P(p)$ and $\CSS(P)$.
\end{corollary}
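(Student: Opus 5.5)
We compute $\tau_i(p)$, substitute it into Proposition~\ref{prop:opposite-pencil}, and then check collinearity using the homothety identity from the proof of Theorem~\ref{thm:discrete-properties}(b).

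\emph{Step 1: $\tau_i(p)=1/2$.} The affine function $\tau_i$ vanishes on $L_i\ni x(s)$. By \eqref{eq:sigma-p} the partner $\sigma_P(x(s))=P_{i+n}+se_{i+n}$ lies on $L_{i+n}$, where $\tau_i=1$. Since $p$ is the midpoint of these two points and $\tau_i$ is affine, $\tau_i(p)=\tfrac12(0+1)=\tfrac12$.

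\emph{Step 2: formula \eqref{eq:Qi-on-E05}.} The point $p$ is interior. Indeed, it is the midpoint of a chord joining interior points of two opposite sides, and $0<\tau_i(p)<1$. So \eqref{eq:Qi-mu} applies. With $\tau_i(p)=1/2$ its coefficient becomes
\[
\frac{\mu_i}{1+(\mu_i^2-1)/2}=\frac{2\mu_i}{1+\mu_i^2},
\]
which gives \eqref{eq:Qi-on-E05}.

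\emph{Step 3: collinearity.} The proof of Theorem~\ref{thm:discrete-properties}(b) gives
\[
D_i=(1-\lambda_i)x(s)+\lambda_i\sigma_P(x(s)),
\]
so the affine chord $\ell=\overline{x(s)\,\sigma_P(x(s))}$ contains $D_i$. It contains $p$ as the midpoint of its endpoints. By \eqref{eq:Qi-on-E05}, $Q_i(p)$ lies on the line $pD_i$. If $p\neq D_i$, that line is $\ell$. If $p=D_i$, then $Q_i(p)=D_i\in\ell$.

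\emph{Step 4: common generating line.} The line $\ell$ is fixed by $\Phi_P$ by Theorem~\ref{thm:discrete-properties}(a,b), and it belongs to $\Lambda_p$. Hence $\ell=\Phi_P(\ell)\in\Gamma^*_{P,p}$ is a generating line of $\ART_P(p)$. As $\ell$ also lies in the edge family whose completed dual is $\CSS(P)$, it is a generating line of $\CSS(P)$ as well.

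\emph{Main obstacle.} The delicate point is showing that $\ell$ actually belongs to the opposite-side cell, so that $Q_i(p)$ is the envelope point attached to it. This holds because $0<s<1$ puts both endpoints of $\ell$ in the open sides $e_i$ and $e_{i+n}$, so the cell supported on $e_i,e_{i+n}$ is non-empty and contains $\ell$. Proposition~\ref{prop:opposite-pencil} then says every reflected line of that cell passes through $Q_i(p)$; in particular $\Phi_P(\ell)=\ell$ does, which confirms Step~3 independently.
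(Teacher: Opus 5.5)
Your proof is correct and follows the paper's argument. You evaluate the affine function $\tau_i$ at the midpoint, substitute $\tau_i(p)=1/2$ into \eqref{eq:Qi-mu}, and obtain the incidence and common-generating-line statements from Theorem~\ref{thm:discrete-properties}(b) together with Proposition~\ref{prop:opposite-pencil}; beyond the paper you also make explicit the interiority of $p$, the case $p=D_i$, and why the fixed chord lies in the opposite-side cell.
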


\begin{proof}
The~affine function $\tau_i$ has values $0$ and $1$ at the~two endpoints $x(s)$ and $\sigma_P(x(s))$, respectively.  Their midpoint therefore has value $1/2$.  Substitution in \eqref{eq:Qi-mu} gives \eqref{eq:Qi-on-E05}.  The~remaining statement follows from Theorem~\ref{thm:discrete-properties}(b) and Proposition~\ref{prop:opposite-pencil}.
\end{proof}

\begin{remark}
Corollary~\ref{cor:wigner-opposite-point} is a~precise reason for the~visual similarity of the~$\ART$ and $\CSS$ when the~basepoint lies on the~polygonal Wigner caustic -- see Figure~\ref{fig:discrete-example}.  At the~corresponding edge of the~Wigner caustic the~two fronts share a~generating line, the~CSS contact is the~vertex $D_i$, and the~distinguished $\ART$ contact $Q_i(p)$ lies on the~same line.  This is a~local incidence statement, not an~equality or a~global affine equivalence of the~two fronts.
\end{remark}

\begin{proposition}
\label{prop:Q-polygon-area}
Let $\mathcal Q_P(p)$ be the~cyclic polygon with vertices $Q_0(p),\ldots,Q_{n-1}(p)$, whether or not every opposite-side cell is active for the~pencil through $p$.  Put
\begin{equation}\label{eq:ri-definition}
  r_i(p)
  =
  1+
  \frac{\mu_i}
  {1+(\mu_i^2-1)\tau_i(p)}.
\end{equation}
Then $r_i(p)>1$, $Q_i(p)-p=r_i(p)(D_i-p)$,
and
\begin{equation}\label{eq:Q-polygon-area}
  A^*\bigl(\mathcal Q_P(p)\bigr)
  =
  \frac12\sum_{i=0}^{n-1}
  r_i(p)r_{i+1}(p)
  [D_i-p,D_{i+1}-p].
\end{equation}
In particular, if $p$ belongs to the~oriented kernel of the~CSS, meaning that
\begin{equation}\label{eq:oriented-CSS-kernel}
  [D_i-p,D_{i+1}-p]\leq0
  \quad\text{for every }i,
\end{equation}
then
\begin{equation}\label{eq:Q-CSS-E05-order}
  \left|A^*\bigl(\mathcal Q_P(p)\bigr)\right|
  \geq
  \left|A^*\bigl(\CSS(P)\bigr)\right|
  \geq
  \left|A^*\bigl(\AreaEvolute(P)\bigr)\right|.
\end{equation}
\end{proposition}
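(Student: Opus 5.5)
The plan is to derive all three assertions from the explicit formula \eqref{eq:Qi-mu} in Proposition~\ref{prop:opposite-pencil}, using only bilinearity of $[\cdot,\cdot]$ and the sign hypothesis \eqref{eq:oriented-CSS-kernel}.

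\textbf{Step 1: the factor $r_i(p)$.} Since $P$ lies in the closed strip between the lines containing $e_i$ and $e_{i+n}$, every $p\in\Int(P)$ satisfies $0<\tau_i(p)<1$. Hence
$$
\Delta_i(p)=1+(\mu_i^2-1)\tau_i(p)=(1-\tau_i(p))+\mu_i^2\tau_i(p)>0,
$$
so $r_i(p)=1+\mu_i/\Delta_i(p)>1$. Rewriting \eqref{eq:Qi-mu} as $Q_i-D_i=-(\mu_i/\Delta_i)(p-D_i)$ and adding $D_i-p$ gives
$$
Q_i-p=\Bigl(1+\frac{\mu_i}{\Delta_i}\Bigr)(D_i-p)=r_i(p)(D_i-p).
$$

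\textbf{Step 2: cyclic indexing.} Before using a shoelace formula, I will check that the indexing is genuinely cyclic of period $n$. Since $d_{i+n}=d_i$, we have $D_{i+n}=D_i$. Using $\mu_{i+n}=\mu_i^{-1}$ and $\tau_{i+n}=1-\tau_i$, a one-line computation gives
$$
\frac{\mu_i^{-1}}{1+(\mu_i^{-2}-1)(1-\tau_i)}=\frac{\mu_i}{1+(\mu_i^2-1)\tau_i},
$$
so $r_{i+n}=r_i$ and $Q_{i+n}=Q_i$.

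\textbf{Step 3: the area formula.} The oriented area of a closed polygon may be computed from any base point, so I take $p$:
$$
A^*(\mathcal Q_P(p))=\frac12\sum_{i=0}^{n-1}[Q_i-p,Q_{i+1}-p].
$$
Substituting Step~1 and using bilinearity gives \eqref{eq:Q-polygon-area} immediately.

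\textbf{Step 4: the first inequality in \eqref{eq:Q-CSS-E05-order}.} Under \eqref{eq:oriented-CSS-kernel} every summand in \eqref{eq:Q-polygon-area} is non-positive. Hence
$$
\bigl|A^*(\mathcal Q_P(p))\bigr|=\frac12\sum_i r_ir_{i+1}\bigl|[D_i-p,D_{i+1}-p]\bigr|\ge\frac12\sum_i\bigl|[D_i-p,D_{i+1}-p]\bigr|,
$$
because $r_ir_{i+1}>1$. The last sum is $\bigl|\frac12\sum_i[D_i-p,D_{i+1}-p]\bigr|=|A^*(\CSS(P))|$, again because all terms have the same sign. Here I must fix the convention that $\CSS(P)$ is the chain $D_0,\dots,D_{n-1}$ traversed once. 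The chain indexed modulo $2n$ traverses the same polygon twice, and would only change both sides by a common factor, so the convention should be stated and matched to the one used for $\AreaEvolute(P)$.

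\textbf{Step 5: the second inequality (main obstacle).} The comparison $|A^*(\CSS(P))|\ge|A^*(\AreaEvolute(P))|$ does not involve $p$; it is an inequality between the CSS and the Wigner caustic of a $\CPPOS$. I would first look for it in \cite{CraizerTeixeiraSilva,KonicerEtAl}, where areas of these polygonal fronts are compared.

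Failing that, I would prove it directly from the edge data, with the following bookkeeping:
\begin{itemize}
\item both vertex families lie on the great diagonals $d_i$: $M_i$ is the midpoint of $d_i$, while $D_i=(1-\lambda_i)P_i+\lambda_iP_{i+n}$ and $D_{i-1}$ also lie on $d_i$;
\item $M_{i+1}-M_i=\frac12(e_i+e_{i+n})=\frac12(1-\mu_i)e_i$;
\item $D_i-D_{i-1}$ is a multiple of $P_{i+n}-P_i$;
\item one writes both oriented areas as sums over $i$ of $[\,\cdot\,,\,\cdot\,]$-products of these parallel-side data, and compares them termwise.
\end{itemize}
The termwise comparison should reduce to an AM--GM type inequality in the ratios $\mu_i$ (through the $\lambda_i$). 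Getting the signs and this reduction right is where I expect most of the work to lie.
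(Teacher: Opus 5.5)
Your Steps 1--4 are the paper's argument: rewrite \eqref{eq:Qi-mu} as $Q_i-p=r_i(D_i-p)$, compute the shoelace sum from the basepoint $p$, and combine $r_ir_{i+1}>1$ with the sign condition. Your checks that $r_{i+n}=r_i$ and $Q_{i+n}=Q_i$, and your single-traversal convention for $\CSS(P)$ (which the first inequality really does need), fill in details the paper leaves implicit. For the second inequality the paper also just cites the polygonal CSS--Wigner caustic inequality \cite[Theorem~4.7]{KonicerEtAl}, so your primary route in Step 5 is the paper's, and your speculative termwise fallback, which is not yet a proof, is unnecessary.
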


\begin{proof}
Rearranging \eqref{eq:Qi-mu} gives $Q_i-p=r_i(D_i-p)$.  Translating the~area formula to the~basepoint $p$ now proves \eqref{eq:Q-polygon-area}.  Under \eqref{eq:oriented-CSS-kernel}, every summand has the~conventional non-positive orientation and $r_ir_{i+1}>1$.  This gives the~first inequality in \eqref{eq:Q-CSS-E05-order}.  The~second is the~polygonal CSS--Wigner caustic inequality proved in \cite[Theorem~4.7]{KonicerEtAl}.
\end{proof}

\begin{remark}
The~polygon $\mathcal Q_P(p)$ is an~auxiliary object, not the~full ART. Only the~points associated with active opposite-side cells belong to the~cellwise construction for a~given $p$, and the~remaining $\ART$ pieces are generally curved conic arcs with transition segments. See Figure~\ref{fig:discrete-example}.
\end{remark}

\subsection{Oriented area, boundary degeneration, and an~affine invariant}

\begin{proposition}\label{prop:polygonal-direction}
Let $P$ be a~$\CPPOS$ and $p\in\Int(P)$.  As a~line of the~pencil $\Lambda_p$ turns counterclockwise, the~direction of its image under $\Phi_P$ also turns counterclockwise, strictly on every open combinatorial cell and without reversal at a~cell transition.  The~map that assigns to a~line of $\Gamma^*_{P,p}$ its unoriented direction is an~orientation-preserving homeomorphism
$$
  \Gamma^*_{P,p}\rightarrow\RP^1.
$$
Consequently, the~normal direction is a~global projective parameter on the~transformed line family.
\end{proposition}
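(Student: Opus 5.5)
The plan is to adapt the smooth argument of Proposition~\ref{prop:direction-containment} cell by cell, and then handle cell transitions separately. First, on an open combinatorial cell the original endpoints move along fixed open sides $e_i$, $e_j$. As the line through $p$ turns counterclockwise, both endpoints move in the positive direction along $\partial P$; strictly so, since $p$ is interior and each intersection with a side is transverse. Because $\sigma_P$ is edgewise affine and orientation preserving, the reflected endpoints $\sigma_P(x)$ and $\sigma_P(y)$ also move strictly positively, along the opposite sides $e_{i+n}$ and $e_{j+n}$.

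Write $v=\sigma_P(y)-\sigma_P(x)$, where $\sigma_P(x)$ precedes $\sigma_P(y)$ in the positive orientation. Convexity gives $[v,\dot{\sigma_P(y)}]\geq0$ and $[v,\dot{\sigma_P(x)}]\leq0$. A velocity can fail to be transverse to $v$ only if the chord lies along a side, which is impossible for a secant meeting $\Int(P)$. Hence $\frac{\dd}{\dd s}\arg v=[v,\dot{\sigma_P(y)}-\dot{\sigma_P(x)}]/\lVert v\rVert^2>0$ on every open cell.

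Second, at a cell transition one endpoint passes through a vertex $P_k$, and its image passes through $\sigma_P(P_k)=P_{k+n}$. The map $\Phi_P$ is continuous by Theorem~\ref{thm:discrete-properties}(a), so the direction is continuous across the transition. The one-sided derivatives of $\arg v$ are both positive by the same formula, using the one-sided velocities along the two sides adjacent to $P_{k+n}$. So there is no reversal, only a kink. The direction of $\Phi_P(\ell)$ is therefore a continuous, strictly increasing function of the pencil parameter on $\Lambda_p\cong\RP^1$.

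Third, I would show that the map $\Gamma^*_{P,p}\to\RP^1$ has degree one, which together with strict monotonicity makes it a homeomorphism. One way is to follow a line once around the half-turn: $x$ and $y$ exchange roles, so $\sigma_P(x)$ and $\sigma_P(y)$ exchange roles and the reflected chord returns to itself with its orientation reversed. The direction of $v$ thus advances by an odd multiple of $\uppi$.

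The main obstacle is excluding advances of $3\uppi$ or more. To do this, I would note that the reflected endpoints each traverse exactly half of $\partial P$, from $\sigma_P(x_0)$ to $\sigma_P(y_0)$ and vice versa. Throughout, they stay cyclically ordered, since $\sigma_P$ preserves cyclic order. A chord of a convex polygon whose endpoints move monotonically without ever crossing each other can rotate by at most $\uppi$ during such a half-exchange; this matches the alternating-endpoint picture of Theorem~\ref{thm:discrete-properties}(d). Alternatively, I could compare with the centrally symmetric case by a homotopy through $\CPPOS$ polygons, which shows the degree is constant and equal to one.

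A strictly monotone degree-one circle map is an orientation-preserving homeomorphism. Hence the normal direction is a global projective parameter on $\Gamma^*_{P,p}$.
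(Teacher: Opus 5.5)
Your proposal is correct, and your cellwise analysis and handling of vertex transitions match the paper's. The genuine difference is how you get degree one.

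The paper argues topologically. Because $\Phi_P$ is a homeomorphism of the secant Möbius band (Theorem~\ref{thm:discrete-properties}(a)), $\Gamma^*_{P,p}$ is an embedded one-sided circle. Such a circle is isotopic to the core, so the direction projection has degree $\pm1$, and the local computation fixes the sign.

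You instead count rotation directly. Over a half-turn of the pencil the oriented reflected chord is reversed, so $\arg v$ advances by an odd multiple of $\uppi$. Your first argument that the advance is exactly $\uppi$ is only asserted; the appeal to Theorem~\ref{thm:discrete-properties}(d) does not prove it. It is easy to supply:
\begin{itemize}
\item Start from a line through $p$ containing no vertex, so that $\sigma_P(x_0)$ and $\sigma_P(y_0)$ lie in distinct open sides.
\item Let $\psi$ be a continuous lift of an outward normal angle at $a=\sigma_P(x)$. Pause the motion whenever $a$ reaches a vertex and let $\psi$ sweep the normal cone there.
\item Since $v$ points from $a$ into $P$, the quantity $\arg v-\psi$ stays in a single interval $[\uppi/2,3\uppi/2]+2k\uppi$.
\item Meanwhile $\psi$ increases by the turning $\Theta\in(0,2\uppi)$ of the arc from $\sigma_P(x_0)$ to $\sigma_P(y_0)$.
\item Hence the total rotation lies in $[\Theta-\uppi,\Theta+\uppi]\subset(-\uppi,3\uppi)$, and being an odd multiple of $\uppi$ it equals $\uppi$.
\end{itemize}

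Your alternative homotopy argument is also sound. Keep $e_0,\dots,e_{n-1}$ fixed and set $\mu_i(t)=(1-t)\mu_i+t$. This preserves closure and convexity and ends at a centrally symmetric polygon, where the direction map is the identity. The direction map depends continuously on $(P,p)$, so its degree stays one.

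Your direct count is elementary and quantitative: the oriented reflected chord turns by exactly $\uppi$ per half-turn of the pencil. It needs only continuity of $\Phi_P$ and convexity, not the homeomorphism property or the classification of simple closed curves in a Möbius band. The paper's route is shorter and applies verbatim to any homeomorphism of the secant band, in particular to the smooth case.
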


\begin{proof}
On an~open cell, let $a(s)$ and $b(s)$ be the~endpoints of the~reflected chord, labelled so that $a$ precedes $b$ in the~positive boundary orientation and both move in the~positive direction along $\partial P$ as the~original line turns counterclockwise.  This is possible because the~two endpoints of the~original chord move in the~positive boundary direction and $\sigma_P$ preserves the~cyclic order.  Put $v=b-a$.  At every regular parameter value, convexity gives
$$
  [v,\dot b]\geq0,
  \qquad
  [v,\dot a]\leq0.
$$
Whenever the~corresponding endpoint moves, the~relevant inequality is strict: a~chord joining points in two distinct open sides cannot be tangent to either of those sides.  At least one endpoint moves on every open cell.  Hence
$$
  \frac{\dd}{\dd s}\arg v
  =
  \frac{[v,\dot b-\dot a]}{|v|^2}>0.
$$
At a~vertex transition the~reflected chord is continuous and the~two one-sided direction maps preserve the~same cyclic order, so no reversal occurs.

It remains only to exclude a~multiple covering of the~direction circle.  The~secant-line Möbius band is an~interval bundle over $\RP^1$, with the~bundle projection given by the~unoriented direction of a~line.  The~pencil $\Lambda_p$ is a~core circle.  By Theorem~\ref{thm:discrete-properties}(a), $\Gamma^*_{P,p}=\Phi_P(\Lambda_p)$ is an~embedded one-sided core circle as well.  Every embedded one-sided circle in a~Möbius band is isotopic to its core, so the~direction projection has degree $\pm1$ on $\Gamma^*_{P,p}$.  The~local orientation computation above makes this degree $+1$.  The~direction map is therefore a~homeomorphism.
\end{proof}

Parametrise the~transformed unoriented lines by their normal angle
$\theta\in\R/\uppi\mathbb Z$ and choose the~continuous lift
\begin{equation}\label{eq:polygonal-line-support}
  n(\theta)\cdot(z-p)=q_p(\theta),
  \qquad
  q_p(\theta+\uppi)=-q_p(\theta).
\end{equation}
The~function $q_p$ is continuous and piecewise smooth.  At a~cell transition its one-sided derivatives may differ, and the~completed envelope contains the~segment between the~two one-sided envelope points.

\begin{lemma}\label{lem:polygonal-support-continuity}
Let $p_\nu\in\Int(P)$ and $p_\nu\to p_0\in P$.  If $p_0\in\Int(P)$, let $q_{p_0}$ be defined by \eqref{eq:polygonal-line-support}.  If $p_0\in\partial P$, put
$p_0^*=\sigma_P(p_0)$ and 
$q_{p_0}(\theta)=(p_0^*-p_0)\cdot n(\theta)$.
Then
\begin{equation}\label{eq:polygonal-support-H1}
  q_{p_\nu}\rightarrow q_{p_0}.
\end{equation}
In particular,
\begin{equation}\label{eq:polygonal-energy-continuity}
  \int_0^\uppi
  \bigl((q_{p_\nu}')^2-q_{p_\nu}^2\bigr)\,\dd\theta
  \rightarrow
  \int_0^\uppi
  \bigl((q_{p_0}')^2-q_{p_0}^2\bigr)\,\dd\theta.
\end{equation}
\end{lemma}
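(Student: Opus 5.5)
The plan is to prove convergence of $q_{p_\nu}$ to $q_{p_0}$ in $H^1$ on the anti-periodic interval $[0,\uppi]$. More precisely, we aim for uniform convergence together with convergence of derivatives in $L^2$, or at least a uniform bound on $\|q_p'\|_{L^\infty}$ combined with almost-everywhere convergence of $q_{p_\nu}'$. Dominated convergence then gives \eqref{eq:polygonal-energy-continuity}, because the integrand $(q')^2-q^2$ converges pointwise almost everywhere and is uniformly bounded.

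\emph{Step 1: uniform convergence.} By Proposition \ref{prop:polygonal-direction}, for every $p$ the normal angle $\theta$ parametrises $\Gamma^*_{P,p}$ homeomorphically. For each fixed $\theta$, the value $q_p(\theta)$ is the signed offset of the unique reflected line with normal $n(\theta)$.

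\begin{enumerate}[(i)]
\item \emph{Interior limit.} If $p_0\in\Int(P)$, the pencils $\Lambda_{p_\nu}$ converge to $\Lambda_{p_0}$ in the secant band. Since $\Phi_P$ is a homeomorphism (Theorem \ref{thm:discrete-properties}(a)), the image curves converge uniformly as compact embedded curves. Because each image curve is a graph over the direction circle, this forces $q_{p_\nu}\to q_{p_0}$ uniformly.
\item \emph{Boundary limit.} If $p_0\in\partial P$, every chord through $p_\nu$ either has an endpoint close to $p_0$, or it is close to a line through $p_0$. In the second case its other endpoint is arbitrary. The continuity of $\sigma_P$ then shows that each reflected line passes within $o(1)$ of $p_0^*=\sigma_P(p_0)$. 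Since all reflected lines also cross $P$, which is bounded, the offset relative to $p_\nu$ satisfies $q_{p_\nu}(\theta)=(p_0^*-p_\nu)\cdot n(\theta)+o(1)$ uniformly in $\theta$.

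I expect the degenerate case $p_0$ at a vertex, or a near-tangent chord, to need care. There I would argue by compactness: take any subsequence of lines with normal $\theta$ and extract a convergent limit chord through $p_0$. Its endpoints are $p_0$ and some $y\in\partial P$, so its image passes through $p_0^*$. If instead it is a supporting line through $p_0$, continuity of $\sigma_P$ along $\partial P$ still sends both endpoints near $\sigma_P$ of the adjacent edge at $p_0$, giving the same conclusion.
\end{enumerate}

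\emph{Step 2: derivatives.} By \eqref{eq:support-envelope}, applied cellwise, the value $q_p'(\theta)$ is the tangential coordinate of the envelope point, or, at transition angles, of a one-sided envelope point. By Theorem \ref{thm:discrete-properties}(d), all of these points lie in $P$. Hence $|q_p'|\leq\operatorname{diam}P$ for every $p$, which is the uniform Lipschitz bound. Uniform convergence of $q_{p_\nu}$ together with uniformly bounded derivatives gives weak-$*$ convergence of $q_{p_\nu}'$ in $L^\infty$.

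For the energy we need strong $L^2$ convergence of the derivatives. I would obtain it cellwise. In the interior case, the cell decomposition of $\Lambda_{p_\nu}$ converges to that of $\Lambda_{p_0}$, except near finitely many transition angles. On each open cell, the explicit rational formulas of Theorem \ref{thm:exact-cellwise} depend real-analytically on $p$, so $q_{p_\nu}'\to q_{p_0}'$ locally uniformly away from a set of angles whose measure tends to zero. The uniform bound then handles the exceptional set.

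In the boundary case, the envelope points lie in the set where nearby reflected chords intersect. By Step 1 these all pass within $o(1)$ of $p_0^*$. Two reflected lines with normals differing by at least $\eta$ therefore meet within $o(1)/\sin\eta$ of $p_0^*$. I would use this to show that envelope points converge to $p_0^*$ except on transition angles of small total measure, which is the polygonal analogue of Theorem \ref{thm:smooth-boundary-degeneration}. This gives $q_{p_\nu}'(\theta)\to (p_0^*-p_0)\cdot\tau(\theta)=q_{p_0}'(\theta)$ almost everywhere.

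\emph{Main obstacle.} I expect the obstacle to be exactly this boundary case. When $p_0$ lies on an edge $e_i$, a whole cell, the opposite-side cell, or chords ending on the edge through $p_0$, may carry envelope points that do not collapse. This is the doubled-segment phenomenon alluded to in Remark \ref{rem:smooth-boundary-interpretation}. In that case I would abandon pointwise convergence and prove only that the contribution of such cells to $\int(q')^2$ converges, using the explicit point degenerations in Propositions \ref{prop:opposite-pencil} and \ref{prop:adjacent-pencil}.

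If that fails too, a fallback is to interpret \eqref{eq:polygonal-support-H1} as uniform convergence only. For \eqref{eq:polygonal-energy-continuity}, I would then compute the energy as $-2A^*$ of the completed front and show that this oriented area converges, because the front converges in Hausdorff distance with uniformly bounded length.
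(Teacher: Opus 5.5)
Your Step~1, the bound $|q_p'|\leq\operatorname{diam}P$ from $\ART_P(p)\subset P$, the interior case of Step~2, and the closing dominated-convergence step are essentially the paper's proof. The paper gets uniform convergence from pointwise convergence at each fixed $\theta$, by the same subsequence argument on $\Phi_P(L_\nu(\theta))\in\Lambda_{p_\nu}$, upgraded by the equi-Lipschitz bound. The gap is the boundary case of Step~2, which is where the lemma has real content. Your estimate that reflected lines with normals $\eta$ apart meet within $o(1)/\sin\eta$ of $p_0^*$ is only zeroth-order information. The tangential coordinate of $L(\theta)\cap L(\theta+\eta)$ is a difference quotient of $q_{p_\nu}$, i.e.\ up to $O(\eta)$ an average of $q_{p_\nu}'$ over a window of length $\eta$. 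So it gives only weak convergence of $q_{p_\nu}'$ to $q_{p_0}'$. Weak convergence yields $\liminf\int\bigl((q_{p_\nu}')^2-q_{p_\nu}^2\bigr)\geq\int\bigl((q_{p_0}')^2-q_{p_0}^2\bigr)$, and at a boundary point this is vacuous: the right-hand side is $0$ and the left-hand side is already non-negative by Theorem~\ref{thm:polygonal-art-area}. What you need is the upper bound. Envelope points correspond to $\eta\to0$, where your estimate says nothing.

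Your diagnosis of the obstacle is also off, because the cells you suspect do collapse. Take $p_0=x$ in the open side $e_i$.
\begin{enumerate}[(i)]
\item The two adjacent-side cells are exact pencils through $T_i(p_\nu)$ and $T_{i-1}(p_\nu)$ (Proposition~\ref{prop:adjacent-pencil}). Both $T_i$ and $T_{i-1}$ restrict to $\widetilde\sigma_i$ on $L_i$, so both centres tend to $\sigma_P(x)$.
\item The opposite-side cell is the pencil through $Q_i(p_\nu)$. Letting $\tau_i(p_\nu)\to0$ in \eqref{eq:Qi-mu} gives $Q_i(p_\nu)\to D_i-\mu_i(x-D_i)=\sigma_P(x)$.
\item For every other side $e_j$, the parameter on $e_j$ stays away from $L_i\cap L_j$, since $L_i\cap P=e_i$. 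Hence the formula of Theorem~\ref{thm:exact-cellwise} converges in $C^1$ to a regular pencil through $\sigma_P(x)$.
\end{enumerate}
The only non-collapsing cell is the one with endpoints on $e_{i-1}$ and $e_{i+1}$, whose arc tends to $R_i(x)$. Its reflected chords join points within $O(\operatorname{dist}(p_\nu,\partial P))$ of $P_{i+n}$ and $P_{i+n+1}$, so by Proposition~\ref{prop:polygonal-direction} it occupies an output $\theta$-interval of that length. The doubled segment of Theorem~\ref{thm:boundary-degeneration} consists of transition segments, which sit at single values of $\theta$ and do not enter $\int(q')^2$. At a vertex, the cell cutting off the vertex is again an adjacent-side pencil whose centre tends to $p_0^*$.

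This cellwise $C^1$ collapse is the missing ingredient. It gives $q_{p_\nu}'\to q_{p_0}'$ almost everywhere, and your dominated-convergence step then finishes the proof. Your fallback does not replace it. Hausdorff convergence of the images of fronts, even with bounded length, does not control oriented area; you would need uniform convergence of parametrisations. Moreover, the completed fronts here converge to the doubled segment $[\sigma_P(x),R_i(x)]$, not to $\{p_0^*\}$, so even identifying the limit requires the same cell analysis.
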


\begin{proof}
Let $d=\operatorname{diam}(P)$.  At every regular parameter value the~support-envelope formula gives
$$
  X_{p_\nu}
  =
  p_\nu+q_{p_\nu}n+q_{p_\nu}'\tau.
$$
Theorem~\ref{thm:discrete-properties}(d) gives $X_{p_\nu}\in P$.  Consequently,
\begin{equation}\label{eq:polygonal-support-uniform-bound}
  q_{p_\nu}(\theta)^2+
  q_{p_\nu}'(\theta)^2
  =
  |X_{p_\nu}(\theta)-p_\nu|^2
  \leq d^2
\end{equation}
at every regular parameter value and for both one-sided derivatives at a~transition.  Thus the~functions $q_{p_\nu}$ are uniformly bounded and equi-Lipschitz.

Fix a~normal direction $\theta$ and let $L_\nu(\theta)$ be the~corresponding transformed line.  Since $\Phi_P$ is an~involution, the~line
$K_\nu(\theta)=\Phi_P(L_\nu(\theta))$
belongs to $\Lambda_{p_\nu}$.  After passing to a~subsequence, $K_\nu(\theta)$ converges to a~line through $p_0$.  If $p_0$ is interior, this is a~secant and continuity of the~two boundary endpoints and of $\sigma_P$ shows that $L_\nu(\theta)$ converges to the~unique line of $\Gamma^*_{P,p_0}$ with normal $n(\theta)$.

Suppose that $p_0\in\partial P$.  If the~limiting line meets $P$ in a~non-degenerate chord, then $p_0$ is one of its endpoints, except when the~line contains the~side whose relative interior contains $p_0$.  In the~first case the~limiting reflected line contains $\sigma_P(p_0)$.  In the~second case it is the~opposite supporting line, which also contains $\sigma_P(p_0)$.  If the~limiting chord degenerates to a~vertex, both reflected endpoints tend to the~opposite vertex.  Thus every subsequential limit of $L_\nu(\theta)$ is the~unique line with normal $n(\theta)$ passing through $p_0^*$.  Therefore
$$
  q_{p_\nu}(\theta)\rightarrow
  (p_0^*-p_0)\cdot n(\theta).
$$
The~pointwise limit is unique and continuous.  The~equi-Lipschitz bound \eqref{eq:polygonal-support-uniform-bound} therefore upgrades the~pointwise convergence to uniform convergence, both for interior and boundary $p_0$.

We next consider the~derivatives.  If $p_0$ is interior, then outside the~finitely many transition directions the~two active sides are fixed for all sufficiently large $\nu$.  The~rational cell formula in Theorem~\ref{thm:exact-cellwise} then gives
$q_{p_\nu}'(\theta)\to q_{p_0}'(\theta)$.
If $p_0$ is on the~boundary, the~same conclusion holds outside a~finite set of limiting transition directions.  Indeed, on a~stable non-collapsing cell the~reflected lines converge in the~piecewise $C^1$ sense to a~pencil through $p_0^*$. On the~cell corresponding to the~whole supporting cone at a~vertex this follows from Proposition~\ref{prop:adjacent-pencil}, whose concurrence point tends to $p_0^*$.  Hence the~regular envelope points converge to $p_0^*$ and
$$
  q_{p_\nu}'(\theta)
  =
  \tau(\theta)\cdot
  \bigl(X_{p_\nu}(\theta)-p_\nu\bigr)
  \rightarrow
  (p_0^*-p_0)\cdot\tau(\theta)
  =
  q_{p_0}'(\theta)
$$
for almost every $\theta$.  The~uniform bound \eqref{eq:polygonal-support-uniform-bound} and dominated convergence give convergence of the~derivatives in $L^2(0,\uppi)$.  Together with uniform convergence of the~functions this proves \eqref{eq:polygonal-support-H1}, and \eqref{eq:polygonal-energy-continuity} follows.
\end{proof}

\begin{theorem}
\label{thm:polygonal-art-area}
For every $\CPPOS$ $P$ and every $p\in\Int(P)$,
\begin{equation}\label{eq:polygonal-art-area}
  A^*\bigl(\ART_P(p)\bigr)
  =
  \frac12\sum_I
  \int_I
  \left(q_p(\theta)^2-q_p'(\theta)^2\right)\,\dd\theta
  \leq0,
\end{equation}
where the~sum is over the~smooth cells of the~transformed line family. Equality in the inequality holds if and only if the~completed $\ART$ degenerates to a~point.
\end{theorem}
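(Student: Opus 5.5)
The plan is to follow the proof of Proposition~\ref{prop:smooth-art-area}, adjusting it for cell transitions. By Proposition~\ref{prop:polygonal-direction} the normal angle $\theta\in\R/\uppi\mathbb Z$ is a global parameter on $\Gamma^*_{P,p}$. The support function $q_p$ in \eqref{eq:polygonal-line-support} is continuous and anti-periodic, and it is smooth on each cell $I$. On each open cell the envelope is $X=p+q_pn+q_p'\tau$, exactly as in Proposition~\ref{prop:support-envelope}. Since the rational formula of Theorem~\ref{thm:exact-cellwise} extends smoothly to the closure of each cell, $q_p$ is piecewise $C^2$. At a transition direction $\theta_k$ the completed front contains the segment from $X(\theta_k^-)$ to $X(\theta_k^+)$. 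Both endpoints lie on the same line $n(\theta_k)\cdot(z-p)=q_p(\theta_k)$, and they differ by $\bigl(q_p'(\theta_k^+)-q_p'(\theta_k^-)\bigr)\tau(\theta_k)$.

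Next I compute $A^*=\frac12\oint[X-p,\dd X]$ piece by piece. On a cell, $X'=\rho_p\tau$ with $\rho_p=q_p+q_p''$, so $[X-p,X']=q_p\rho_p$ and the cell contributes $\frac12\int_I q_p(q_p+q_p'')\,\dd\theta$. On a transition segment, $X-p=q_pn+s\tau$ while $\dd X$ is parallel to $\tau$. Using $[n,\tau]=1$, the segment contributes
$$
\tfrac12\,q_p(\theta_k)\bigl(q_p'(\theta_k^+)-q_p'(\theta_k^-)\bigr).
$$
Now integrate by parts on each cell: $\int_I q_pq_p''=[q_pq_p']_{\partial I}-\int_I(q_p')^2$. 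Summing over cells, the boundary terms at each $\theta_k$ equal $q_p(\theta_k)\bigl(q_p'(\theta_k^-)-q_p'(\theta_k^+)\bigr)$, since $q_p$ is continuous there. These cancel exactly with the segment contributions. The endpoint terms at $0$ and $\uppi$ cancel by anti-periodicity, as in the smooth case. This proves the equality in \eqref{eq:polygonal-art-area}.

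For the inequality, $q_p$ is globally Lipschitz and anti-periodic, so $q_p\in H^1$ on $[0,\uppi]$ with only odd Fourier modes. The Fourier computation \eqref{eq:smooth-art-fourier-area} therefore holds verbatim, because it only uses Parseval for $q_p$ and $q_p'$. This gives $A^*\le0$, with equality iff $q_p$ is a first harmonic. In that case every transformed line passes through one point $c$. Then every regular envelope point equals $c$ and all one-sided derivatives agree, so the transition segments are trivial and $\ART_P(p)=\{c\}$. Conversely, suppose the completed ART is a single point $c$. Then every line of the family passes through $c$, so $q_p(\theta)=(c-p)\cdot n(\theta)$ and the area is zero.

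I expect the main obstacle to be the bookkeeping at transitions. Two things need checking. First, the segment is traversed in the orientation induced by the line parameter, from $X(\theta_k^-)$ to $X(\theta_k^+)$; this is what makes its contribution cancel the integration-by-parts jump rather than double it. Second, degenerate point cells, where $\rho_p\equiv0$ on $I$, and several transitions must cause no trouble; they don't, since the formulas hold cellwise. I would verify the orientation convention directly against Definition~\ref{def:discrete-art} and the one-sided limit description.
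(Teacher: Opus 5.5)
Your proposal is correct and follows the paper's proof essentially step by step. You integrate $\frac12 q_p(q_p+q_p'')$ by parts cellwise, and the jump terms $\frac12 q(q'_- - q'_+)$ cancel exactly against the oriented transition segments from $X^-$ to $X^+$. You then apply the anti-periodic Wirtinger (equivalently, odd-mode Parseval) inequality to the Lipschitz function $q_p$, with equality exactly for first harmonics. The only cosmetic difference is that you evaluate each segment's contribution as a line integral rather than as $\frac12[X^- - p, X^+ - p]$, which gives the same value.
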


\begin{proof}
On every open cell the~ordinary support-envelope calculation gives
$X=p+q_pn+q_p'\tau$ and $X'=(q_p+q_p'')\tau$.
Thus the~arc area is obtained by integrating $\frac12q_p(q_p+q_p'')$.  Integration by parts on all cells produces \eqref{eq:polygonal-art-area}, apart from the~endpoint terms at the~cell transitions.

At a~transition angle $\theta_k$, write $q=q_p(\theta_k)$ and let $q'_-$ and $q'_+$ be the~one-sided derivatives.  The~two envelope points are $X^\pm=p+qn+q'_\pm\tau$.
The~endpoint term left by the~adjacent arc integrals is $\frac12q(q'_- -q'_+)$.  The~oriented transition segment from $X^-$ to $X^+$ contributes $\frac12[X^--p,X^+-p]=\frac12q(q'_+-q'_-)$, so the~two terms cancel exactly.

The~anti-periodic Wirtinger inequality, applied by approximation to the~continuous piecewise smooth function $q_p$, gives
$$
  \int_0^\uppi q_p^2\,\dd\theta
  \leq
  \int_0^\uppi (q_p')^2\,\dd\theta.
$$
Equality means that $q_p$ is a~first harmonic.  The~completed support envelope is then a~point, exactly as in the~proof of Proposition~\ref{prop:smooth-art-area}. The~converse is immediate.
\end{proof}

\begin{example}
\label{ex:no-area-ordering}
Consider the~$\CPPOS$ $P$ as
\begin{align*}
\biggl(
\left(200,-\frac{100}{\sqrt3}\right),
\left(200,\frac{200}{\sqrt3}\right),
\left(0,\frac{400}{\sqrt3}\right),
\left(-200,\frac{200}{\sqrt3}\right),
\left(-200,-\frac{200}{\sqrt3}\right),
\left(-50,-\frac{350}{\sqrt3}\right)
\biggr).
\end{align*}
Let $c$ denote the~area centroid of $P$.  Direct polygonal calculation gives
$$
  A^*(\CSS(P))
  =
  -\frac{160000}{63\sqrt3}
  \simeq-1466.286,
  \qquad
  A^*(\AreaEvolute(P))
  =
  -\frac{625}{\sqrt3}
  \simeq-360.844.
$$
Evaluation of the~exact conic formula \eqref{eq:exact-cell-envelope}, including all transition segments, gives the~following values.
\begin{center}
\begin{tabular}{@{}lrr@{}}
\toprule
basepoint & $A^*(\ART_P(p))$ & order of absolute areas\\
\midrule
area centroid $c$ & $-5033.780$ & ART ${}>$ CSS ${}>$ $\mathrm E_{0.5}$\\
$\frac45P_0+\frac15c$ & $-340.631$ & ART ${}<$ $\mathrm E_{0.5}$ ${}<$ CSS\\
\bottomrule
\end{tabular}
\end{center}
Thus neither comparison between the~absolute $\ART$ area and the~two basepoint-independent areas is valid with constant $1$.
\end{example}

The~area convergence in the~next result is weaker than convergence of the~front as a~set.  The~distinction is essential for flat sides.

\begin{theorem}
\label{thm:boundary-degeneration}
The~function $p\mapsto A^*\bigl(\ART_P(p)\bigr)$
extends continuously to $P$ by the~value $0$ on $\partial P$.

More precisely, let $x=P_i+se_i$ with $0<s<1$ and use the~ratios
$\mu_j$ from \eqref{eq:global-mu}.
Put
\begin{equation}\label{eq:boundary-rho}
  \rho_i(s)
  =
  \frac{\mu_{i-1}s}
  {\mu_{i-1}s+\mu_{i+1}(1-s)}
\end{equation}
and
\begin{equation}\label{eq:boundary-R}
  R_i(x)
  =
  P_{i+n}+\rho_i(s)e_{i+n}.
\end{equation}
As $p\to x$ from $\Int(P)$, the~completed $\ART$ converges to the~segment
$[\,\sigma_P(x),R_i(x)\,]$
traversed once in each direction.  Consequently,
\begin{equation}\label{eq:boundary-area-zero}
  \lim_{\substack{p\to x\\p\in\Int(P)}}
  A^*\bigl(\ART_P(p)\bigr)=0.
\end{equation}
The~segment reduces to one point precisely when $\mu_{i-1}=\mu_{i+1}$. In particular this occurs in the~centrally symmetric case.
\end{theorem}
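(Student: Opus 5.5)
The plan has three parts: first the area limit, then the limiting shape at a point on an open side, then the remaining boundary points and the degeneracy statement.

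\emph{The area limit.} I would start from Theorem~\ref{thm:polygonal-art-area}. It writes $2A^*(\ART_P(p))$ as $\int_0^\uppi(q_p^2-(q_p')^2)\,\dd\theta$, where $q_p$ is understood piecewise. Lemma~\ref{lem:polygonal-support-continuity} says that for $p_\nu\to p_0\in\partial P$ one has $q_{p_\nu}\to q_{p_0}=(p_0^*-p_0)\cdot n$, together with convergence of this energy integral. The limit $q_{p_0}$ is a first harmonic, so its energy vanishes. Hence $A^*(\ART_P(p))\to0$ at every boundary point, vertices included, and the continuous extension by $0$ follows at once. This already proves \eqref{eq:boundary-area-zero}. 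The sharper description of the limiting set is a separate statement about the fronts themselves and needs a cellwise analysis.

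\emph{The limiting shape at a point of an open side.} Let $x=P_i+se_i$ with $0<s<1$, and let $p\to x$. I would classify the cells of the pencil $\Lambda_p$ in the limit.
\begin{enumerate}[(i)]
\item Almost all lines through $p$ have one endpoint near $x$ on $e_i$ and the other on some side $e_j$. On such stable cells the reflected lines tend to lines through $\sigma_P(x)$, so the envelope arcs collapse to $\sigma_P(x)$. Their tangent-direction coordinate $q'$ tends to $(x^*-x)\cdot\tau$, as in Lemma~\ref{lem:polygonal-support-continuity}.
\item Two thin angular cells remain: lines through $p$ nearly parallel to $e_i$, whose endpoints lie on the adjacent sides $e_{i-1}$ and $e_{i+1}$. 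Their reflected lines are nearly $L_{i+n}$, with endpoints on $e_{i+n-1}$ and $e_{i+n+1}$.
\end{enumerate}
For the cell supported on $e_{i-1}$ and $e_{i+1}$, I would use the exact conic formula of Theorem~\ref{thm:exact-cellwise}, or the normal form of Proposition~\ref{prop:cellwise-affine-type}, with affine coordinates adapted to $L_{i-1}$ and $L_{i+1}$. Writing $p=x+\eps w$, I expect the envelope to collapse onto a point of $L_{i+n}$. Here is where the parameter enters. A line through $p$ nearly parallel to $e_i$ meets $L_{i-1}$ and $L_{i+1}$ at signed distances $a$ and $b$ from $P_i$ and $P_{i+1}$, in proportions governed by $s$ and $1-s$. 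The reflection maps rescale these displacements by $\mu_{i-1}$ and $\mu_{i+1}$. Consequently the reflected line cuts $L_{i+n}$ near the point dividing $e_{i+n}$ in the weighted ratio $\mu_{i-1}s:\mu_{i+1}(1-s)$. This gives $R_i(x)$ as in \eqref{eq:boundary-rho}--\eqref{eq:boundary-R}.

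Then I would assemble the completed front, with transition segments included, into the limit. It travels from $\sigma_P(x)$ along $L_{i+n}$ to $R_i(x)$ and comes back, because the direction parameter is monotone by Proposition~\ref{prop:polygonal-direction}. So the limit is the segment $[\sigma_P(x),R_i(x)]$ traversed once in each direction, which is consistent with the zero area found above.

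\emph{Remaining cases and the degeneracy criterion.} At a vertex, the relevant cell is the supporting-cone cell. Proposition~\ref{prop:adjacent-pencil} makes that cell a single point tending to $p_0^*$, so the area limit is again $0$. For the last claim, $\rho_i(s)=s$ for all $s$ exactly when $\mu_{i-1}=\mu_{i+1}$. Since $\sigma_P(x)=P_{i+n}+se_{i+n}$, this is precisely the condition $R_i(x)=\sigma_P(x)$. In the centrally symmetric case all $\mu_j=1$, so the segment reduces to a point there.

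\emph{Main obstacle.} The hard step is the uniform control of the thin $(i-1,i+1)$ cell. The conic parameters degenerate as $p\to x$, so I must show that the whole cellwise arc and its adjacent transition segments converge in Hausdorff distance to the stated segment, and not merely that their endpoints converge. I would attack this by rescaling the angular parameter by $\eps$ and taking limits in the explicit quadratic parametrisation $\widehat Z(s)$ of \eqref{eq:exact-cell-envelope}.
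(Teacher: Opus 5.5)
Your proposal follows the paper's proof essentially step for step: the area limit from Theorem~\ref{thm:polygonal-art-area} and Lemma~\ref{lem:polygonal-support-continuity} (the boundary limit $q_x$ is a first harmonic), the collapse to $\sigma_P(x)$ of the cells away from the tangent direction, a first-order analysis of the single shrinking $(e_{i-1},e_{i+1})$ cell producing $R_i(x)$, and the two transition segments at the lines through $P_i$ and $P_{i+1}$ covering $[\sigma_P(x),R_i(x)]$ with opposite orientations. Three corrections are needed: the reflected thin-cell lines do not cross $L_{i+n}$ near $R_i(x)$, because both reflected endpoints lie on the inner side of $L_{i+n}$; your weighted-ratio computation is the $K=0$ case of the paper's elimination via \eqref{eq:boundary-UV-relation} and actually locates the first-order concurrency point of these lines, which $K$ displaces only by $O(\eps)$ transversally to $e_{i+n}$; and there is just one thin cell, not two, whose neighbouring cells on $(e_i,e_{i+1})$ and $(e_{i-1},e_i)$ are the points $T_i(p)$ and $T_{i-1}(p)$, which tend to $\sigma_P(x)$ by Proposition~\ref{prop:adjacent-pencil}.
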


\begin{proof}
We first prove the~global continuity assertion, including approaches to vertices and approaches crossing the~arrangement lines on which the~cell decomposition changes.  By Theorem~\ref{thm:polygonal-art-area} and Lemma~\ref{lem:polygonal-support-continuity},
$$
  A^*\bigl(\ART_P(p)\bigr)
  =
  \frac12\int_0^\uppi
  \bigl(q_p^2-(q_p')^2\bigr)\,\dd\theta
$$
depends continuously on $p\in\Int(P)$.  If $p\to x\in\partial P$, the~limiting function is
$$
  q_x(\theta)
  =
  \bigl(\sigma_P(x)-x\bigr)\cdot n(\theta),
$$
which is a~first harmonic.  Its quadratic area form vanishes, and \eqref{eq:polygonal-support-H1} therefore gives
$$
  A^*\bigl(\ART_P(p)\bigr)\rightarrow0.
$$
This proves the~first assertion and, in particular, gives a~path-independent proof at every polygonal vertex.  It remains to identify the~stronger completed-front limit when $x$ lies in an~open side.

For every chord direction separated from the~tangent direction at the~open side $e_i$, one endpoint tends to $x$.  Hence every corresponding reflected chord tends to a~line through $\sigma_P(x)$, and its envelope point tends to $\sigma_P(x)$.

It remains to resolve the~shrinking cell whose original chord has endpoints on the~adjacent sides $e_{i-1}$ and $e_{i+1}$.  Write these endpoints, after translating $P_i$ to the~origin, as  $-\eps Ue_{i-1}$ and
  $e_i+\eps Ve_{i+1}$.
Writing $p=se_i+O(\eps)$ and taking the~determinant with $e_i$ in the~collinearity equation gives, to first order,
\begin{equation}\label{eq:boundary-UV-relation}
  (1-s)U[e_{i-1},e_i]
  +sV[e_i,e_{i+1}]
  =K,
\end{equation}
where $K$ depends on the~transverse approach of $p$ but not on the~chord parameter.

The~reflected endpoints are
$P_{i+n}+\eps\mu_{i-1}Ue_{i-1}$
and
$P_{i+n+1}-\eps\mu_{i+1}Ve_{i+1}$.
Seek their limiting envelope point in the~form $P_{i+n}+\rho e_{i+n}$.  Expanding the~equation of the~joining line to first order in $\eps$ and eliminating $V$ with \eqref{eq:boundary-UV-relation}, the~coefficient of the~free parameter $U$ is
$$
  [e_{i-1},e_i]
  \left(
  (1-\rho)\mu_{i-1}
  -
  \rho\mu_{i+1}\frac{1-s}{s}
  \right).
$$
It vanishes precisely for
$$
  \rho
  =
  \frac{\mu_{i-1}s}
  {\mu_{i-1}s+\mu_{i+1}(1-s)}.
$$
The~approach-dependent constant $K$ changes only the~$O(\eps)$ displacement transverse to $e_{i+n}$ and therefore does not affect this limit.  This proves \eqref{eq:boundary-R}.

The~two transition segments converge to the~same segment with opposite orientations, while every remaining conic arc collapses to $\sigma_P(x)$.  This proves the~asserted completed-front limit.  Its oriented area is zero, consistently with the~already established limit \eqref{eq:boundary-area-zero}.
\end{proof}

\begin{definition}
\label{def:art-invariant}
In analogy with Definition \ref{def:smooth-art-invariant}, for $p\in P$ define
$$
  \mathcal E_P(p)
  =
  \begin{cases}
  -A^*(\ART_P(p)),&p\in\Int(P),\\
  0,&p\in\partial P.
  \end{cases}
$$
The~\emph{normalised $\ART$ area invariant} and its maximising locus are
\begin{equation}\label{eq:art-invariant}
  \delta_{\ART}(P)
  =
  \frac{\max_{p\in P}\mathcal E_P(p)}{A(P)},
  \qquad
  \mathcal C_{\ART}(P)
  =
  \operatorname*{arg\,max}_{p\in P}\mathcal E_P(p).
\end{equation}
\end{definition}

\begin{proposition}
\label{prop:art-invariant-properties}
The~number $\delta_{\ART}(P)$ is dimensionless and invariant under every nonsingular affine transformation.  The~set $\mathcal C_{\ART}(P)$ is non-empty, compact, and affine covariant. If $\max\mathcal E_P>0$, then $\mathcal C_{\ART}(P)\subset\Int(P)$.  Moreover,
$$
  \delta_{\ART}(P)=0
  \quad\text{if and only if}\quad
  P\text{ is centrally symmetric}.
$$
In that case
  $\mathcal E_P\equiv0$,
  $\delta_{\ART}(P)=0$, and
  $\mathcal C_{\ART}(P)=P$.
\end{proposition}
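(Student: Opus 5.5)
The plan is to mirror the proof of Theorem~\ref{thm:smooth-energy-properties}, replacing the smooth inputs by their polygonal counterparts.

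\emph{Invariance and covariance.} The construction of $\sigma_P$ is affine. It is defined edgewise by affine parameter $s$, and parallelism of opposite sides is preserved. Hence Theorem~\ref{thm:discrete-properties}(a) gives $\Phi_{F(P)}(F\ell)=F(\Phi_P(\ell))$, and so $\ART_{F(P)}(F(p))=F(\ART_P(p))$ as completed fronts, including the transition segments. The oriented area $A^*$ is then multiplied by $\det A$. If $\det A<0$, orientation reversal also reverses the line-family parameter orientation, so the sign of the area computed in the natural orientation is preserved; equivalently, $-A^*\geq0$ by Theorem~\ref{thm:polygonal-art-area}, and the magnitude scales by $|\det A|$. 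Since $A(P)$ also scales by $|\det A|$, $\delta_{\ART}$ is dimensionless and invariant, and $\mathcal C_{\ART}(F(P))=F(\mathcal C_{\ART}(P))$.

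\emph{Existence, compactness, interior.} Lemma~\ref{lem:polygonal-support-continuity} together with Theorem~\ref{thm:polygonal-art-area} shows that $\mathcal E_P$ is continuous on $\Int(P)$. Theorem~\ref{thm:boundary-degeneration} gives the continuous extension by $0$ to $\partial P$. So $\mathcal E_P$ is a continuous non-negative function on the compact set $P$, and its argmax is non-empty and compact. If $\max\mathcal E_P>0$, no maximiser lies on $\partial P$, where the value is $0$.

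\emph{Central symmetry.} If $P$ is centrally symmetric, Theorem~\ref{thm:discrete-properties}(e) gives $\ART_P(p)=\{2o-p\}$, so $\mathcal E_P\equiv0$ and $\mathcal C_{\ART}(P)=P$. Conversely, suppose $\delta_{\ART}(P)=0$. Then $\mathcal E_P\equiv0$ on $\Int(P)$. By the equality case of Theorem~\ref{thm:polygonal-art-area}, each $q_p$ is a first harmonic, so every $\Phi_P(\Lambda_p)$ lies in a pencil. By Proposition~\ref{prop:polygonal-direction} it covers all directions, so it equals that pencil. Lemma~\ref{lem:pencil-rigidity}, applied with $K=P$ and the injective continuous $\Phi_P$, shows that $\Phi_P$ is induced by a projectivity $\widehat F$ of the point plane.

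The main obstacle is to deduce central symmetry from this. I would try the cheapest route: test $\widehat F$ on lines through interior points of a single side. A chord with one endpoint $x$ on the open side $e_i$ is sent to a chord through $\sigma_P(x)$. Letting the other endpoint vary and using that $\widehat F$ maps the pencil at $x$ to a pencil, we get $\widehat F(x)=\sigma_P(x)$ for all boundary points, by continuity of the extended map $F$ from the proof of Lemma~\ref{lem:pencil-rigidity}. Since $\widehat F$ then agrees with the affine map $\widetilde\sigma_i$ on each side line, and a projectivity fixing the line at infinity pointwise on two non-parallel directions is affine, $\widehat F$ is affine.

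An alternative is to use adjacent sides: on $L_i\cup L_{i+1}$, $\widehat F$ coincides with $T_i$ from Proposition~\ref{prop:adjacent-pencil}. Two lines plus an interior point determine the projectivity, so $\widehat F=T_i$ for every $i$.

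In either case $\widehat F$ is an affine involution, since $\sigma_P^2=\operatorname{id}$. It maps $\partial P$ onto itself, and it is fixed-point-free on $\partial P$. Its linear part sends $e_i$ to $e_{i+n}=-\mu_ie_i$ for two independent directions, and it squares to the identity. This forces $\mu_i=1$ and linear part $-I$, so $\widehat F$ is a point reflection and $P$ is centrally symmetric.
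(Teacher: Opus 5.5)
Your proposal is correct and follows the paper's proof essentially step for step. The shared steps are: continuity of $\mathcal E_P$ with vanishing on $\partial P$ for existence and interior location; affine covariance for invariance; and, for the converse, the equality case of Theorem~\ref{thm:polygonal-art-area}, Lemma~\ref{lem:pencil-rigidity}, the identification $\widehat F|_{\partial P}=\sigma_P$ via pencils through boundary points, and affineness from the fixed edge directions at infinity. Your last linear-algebra step is a slightly leaner variant: two independent eigen-directions with eigenvalues $-\mu_i<0$, together with $\widehat F^2=\operatorname{id}$, force the linear part to be $-I$. The paper instead uses three edge directions to get $B=\lambda I$, and therefore treats parallelograms separately.
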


\begin{proof}
Theorems \ref{thm:polygonal-art-area} and \ref{thm:boundary-degeneration} show that $\mathcal E_P$ is a~continuous non-negative function on the~compact polygon $P$ and vanishes on its boundary.  This proves existence, compactness, and the~interior assertion.

The~construction of the~$\ART$ is affine covariant.  Under an~affine map with linear part $A$, unsigned area and $|A^*|$ are both multiplied by $|\det A|$. The~ratio in \eqref{eq:art-invariant} is therefore unchanged, and the~maximising locus is carried to the~maximising locus.

If $P$ is centrally symmetric, the~final identities follow from Theorem~\ref{thm:discrete-properties}(e).  Conversely, suppose that $\delta_{\ART}(P)=0$.  If $P$ has four vertices, the~$\CPPOS$ condition makes it a~parallelogram, so it is already centrally symmetric.  We may therefore assume that $P$ has at least six vertices.

Non-negativity gives $\mathcal E_P(p)=0$ for every $p\in\Int(P)$.  By Theorem~\ref{thm:polygonal-art-area}, $\Phi_P(\Lambda_p)$ is contained in a~projective pencil.  As in the~smooth case, this embedded circle is the~whole pencil.  Lemma~\ref{lem:pencil-rigidity} therefore shows that $\Phi_P$ is the~restriction of a~projectivity $\Psi$ of the~dual plane.  Let $F$ be the~corresponding projectivity of the~primal plane.

Fix $x$ in the~relative interior of an~edge and choose two secants $xy_1,xy_2$ in distinct directions.  Since
  $F(xy_j)=\Phi_P(xy_j)=
  \overline{\sigma_P(x)\,\sigma_P(y_j)}$,
the~two image lines meet at both $F(x)$ and $\sigma_P(x)$.  Hence $F(x)=\sigma_P(x)$.  Thus $F|_{\partial P}=\sigma_P$.

On the~edge $e_i$, the~restriction of $F$ is consequently
  $F(P_i+se_i)=P_{i+n}+se_{i+n}$.
After projective completion this map fixes the~point at infinity of the~direction $e_i$.  Two non-parallel edge directions therefore show that $F$ preserves the~line at infinity, so $F$ is affine.  Write $F(x)=Bx+b$.  Since
$Be_i=e_{i+n}=-\mu_i e_i$
and a~convex polygon with at least six vertices has at least three distinct edge directions, the~linear map $B$ has at least three eigenlines.  Hence $B=\lambda I$.  Since $\Phi_P^2=\operatorname{id}$ on the~open secant band, one has $F^2=\operatorname{id}$.  The~alternative $\lambda=1$ would make $F$ the~identity, whereas $\sigma_P$ has no fixed point.  Therefore $\lambda=-1$, and
  $F(x)=2o-x$
for $o=b/2$.  Since $F(P_i)=P_{i+n}$, the~polygon is centrally symmetric about $o$.
\end{proof}

\begin{remark}
\label{rem:involute-centre-comparison}
The~maximising locus $\mathcal C_{\ART}(P)$ is not, in general, the~central point obtained by iterating involutes.  Craizer proved in the~smooth Minkowski setting that the~alternating iterates of the~Wigner caustic and $\CSS$ converge to a~unique point \cite{CraizerInvolutes}. Craizer and Martini established the~polygonal counterpart \cite{CraizerMartiniPolygons}.  Their point is unique even for centrally symmetric curves and polygons, whereas Theorem~\ref{thm:smooth-energy-properties} and Proposition~\ref{prop:art-invariant-properties} give the~entire enclosed body as the~ART-energy centre set in the~centrally symmetric case.

There is also no general containment of $\mathcal C_{\ART}(P)$ in an~``interior of the~Wigner caustic'', which need not be a~simple polygon. Even the~stronger unambiguous statement
  $\mathcal C_{\ART}(P)
  \subset\operatorname{conv}\bigl(\AreaEvolute(P)\bigr)$
is false: for a~centrally symmetric $\CPPOS$ the~left-hand side is $P$, whereas the~convex hull on the~right is its centre.

Numerical evidence shows that this failure is not confined to the~degenerate zero-energy case.  For example, for the~$\CPPOS$ with vertices
\begin{align*}
 &(1.21750,-0.46410),\ (1.24062,0.48582),(0.71351,1.03921),\ (-0.48215,1.06831),\\
 &(-1.15157,0.43070),\ (-1.16705,-0.20578),(-0.23821,-1.18096),\ (0.44737,-1.19764),
\end{align*}
high-precision cellwise optimisation gives
  $p_{\ART}\simeq(0.36896,0.23543)$ and
  $\delta_{\ART}(P)\simeq0.09612$.
Finding this one point does not by itself determine the~entire maximising set.  To resolve the~latter, for $\eta>0$ consider the~exact near-maximising set
\begin{equation}\label{eq:eta-near-maximiser}
  \mathcal C_{\ART}^{[\eta]}(P)
  =
  \left\{
  p\in P:
  \frac{\mathcal E_P(p)}{A(P)}
  \geq\delta_{\ART}(P)-\eta
  \right\}.
\end{equation}
These sets are nested and
$$
  \mathcal C_{\ART}(P)
  =
  \bigcap_{\eta>0}\mathcal C_{\ART}^{[\eta]}(P).
$$
In the~computation, $\delta_{\ART}(P)$ in \eqref{eq:eta-near-maximiser} was replaced by the~displayed high-precision value.  On each of $192$ equally spaced rays starting at $p_{\ART}$, the~energy was evaluated from $p_{\ART}$ all the~way to $\partial P$ on a~mixed logarithmic--linear mesh, and every first exit from the~superlevel set was refined by $20$ bisection steps.  No ray re-entered any of the~seven tested superlevel sets.  A~global energy grid and independent multi-start searches found no additional component.

\begin{center}
\small
\begin{tabular}{@{}cccc@{}}
\toprule
$\eta$
&
$A\bigl(\mathcal C_{\ART}^{[\eta]}(P)\bigr)$
&
$\operatorname{diam}\bigl(\mathcal C_{\ART}^{[\eta]}(P)\bigr)$
&
radial range about $p_{\ART}$
\\
\midrule
$10^{-4}$ & $1.8236\cdot10^{-4}$ & $3.0326\cdot10^{-2}$
& $[6.4659\cdot10^{-4},\,2.9330\cdot10^{-2}]$\\
$10^{-5}$ & $2.6407\cdot10^{-6}$ & $4.1404\cdot10^{-3}$
& $[6.4681\cdot10^{-5},\,4.0408\cdot10^{-3}]$\\
$10^{-6}$ & $2.7974\cdot10^{-8}$ & $4.3764\cdot10^{-4}$
& $[6.4683\cdot10^{-6},\,4.2768\cdot10^{-4}]$\\
\bottomrule
\end{tabular}
\end{center}

Thus both the~area and diameter shrink rapidly to zero.  At the~smallest scales the~diameter is asymptotically proportional to $\eta$, while the~area is proportional to $\eta^2$.  This is the~numerical signature of a~strict, cone-like maximum at a~combinatorial-chamber intersection, rather than of a~positive-length plateau.  Accordingly, at the~stated resolution the~entire numerical locus is
$\mathcal C_{\ART}^{\mathrm{num}}(P)=\{p_{\ART}\}$.
This remains numerical evidence, not a~proof of uniqueness.

Within numerical accuracy $p_{\ART}$ is the~second $\CSS$ vertex and lies outside $\operatorname{conv}(\AreaEvolute(P))$ by a~supporting-line margin approximately $0.3063$.  The~Craizer--Martini central point is approximately $(0.080256,0.000091)$.

The~displayed coordinates are rounded.  For the~numerical computation and Figure~\ref{fig:art-energy-maximizer}, each pair of almost parallel opposite sides was assigned its common least-change normal and the~support polygon was rebuilt.  The~maximum displacement of a~vertex is less than $3.8\cdot10^{-7}$.
\end{remark}

\begin{figure}[ht]
  \centering
  \includegraphics[width=\linewidth]{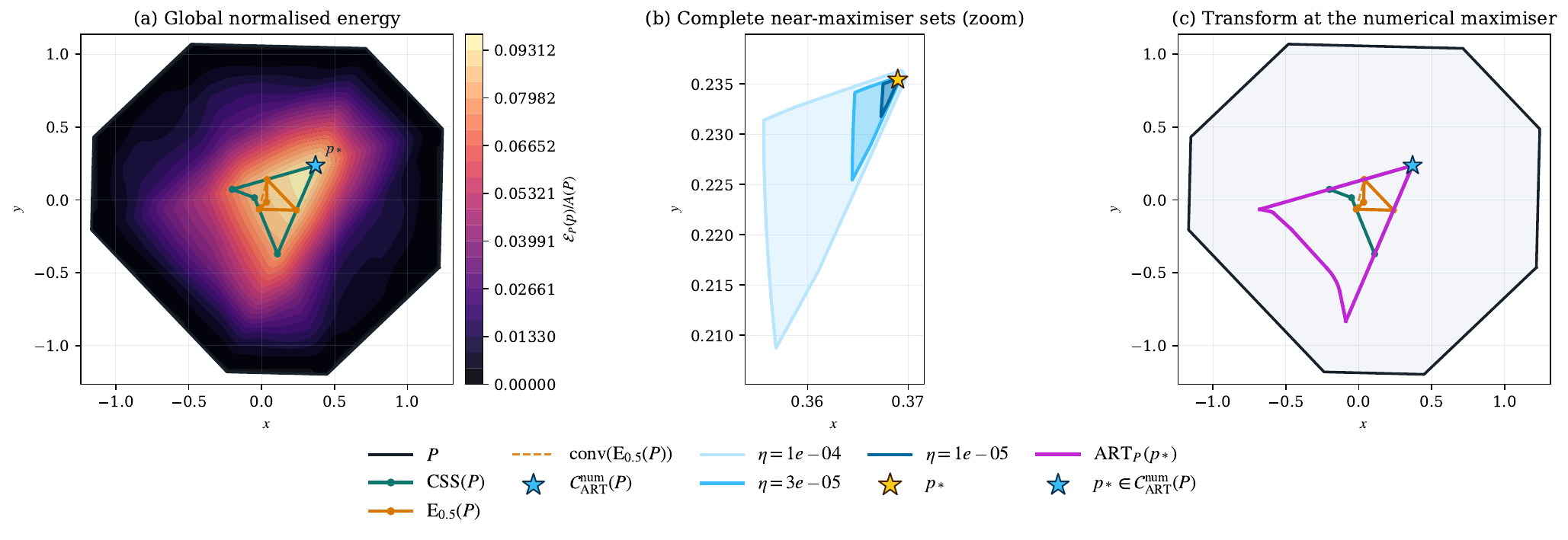}
  \caption{The~explicit CPPOS.  Left: the~global normalised energy.
Centre: a~separate magnification of the~complete numerical near-maximising sets for $\eta=10^{-4},3\cdot10^{-5},10^{-5}$. Their pointed anisotropic shape reflects the~chamber intersection at $p_{\ART}$.  Right: the~completed $\ART$ at $p_{\ART}$.  The~dashed orange polygon is the~convex hull of the~Wigner caustic}
  \label{fig:art-energy-maximizer}
\end{figure}

\begin{proposition}
\label{prop:polygonal-plateau-alternative}
Let
$$
  \mathcal A_P
  =
  \bigcup_{0\leq i<j<2n}\overline{P_iP_j}
$$
be the~finite arrangement of lines determined by pairs of vertices. Then $\mathcal E_P$ is real analytic on every connected component of $\Int(P)\setminus\mathcal A_P$.

If $P$ is not centrally symmetric and
  $\Int\bigl(\mathcal C_{\ART}(P)\bigr)\neq\varnothing$,
then there is a~whole arrangement chamber $U$ on which
$$
  \mathcal E_P|_U
  \equiv
  \max_{p\in P}\mathcal E_P(p)>0.
$$
Consequently, if no chamber branch of $\mathcal E_P$ is constant, then $\mathcal C_{\ART}(P)$ has empty interior and two-dimensional Lebesgue measure zero.
\end{proposition}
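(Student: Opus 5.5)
We first establish analyticity on each chamber $U$ of $\Int(P)\setminus\mathcal A_P$. For $p\in U$, no line through $p$ contains two vertices. So by Theorem~\ref{thm:exact-cellwise} the pencil $\Lambda_p$ splits into exactly $2n$ cells. The cyclic sequence of side pairs $(e_i,e_j)$ supporting these cells is constant on $U$, because it changes only when the cyclic order of the $2n$ vertex directions seen from $p$ changes, and that happens precisely when $p$ crosses a line $\overline{P_iP_j}$. On a fixed cell, the reflected line $\boldsymbol\ell(s)$ in \eqref{eq:quadratic-dual-cell} has coefficients that are polynomial in $p$ and in the side parameter $s$. The cell endpoints are the parameters at which the original chord passes through a vertex, and these are rational in $p$.

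We then write the energy as a sum over cells, using the form from the proof of Theorem~\ref{thm:polygonal-art-area}:
$$
  \mathcal E_P(p)=-\sum_I\Bigl(\tfrac12\int_I[X-p,\dd X]+\text{transition segment terms}\Bigr).
$$
On a cell, $\int[\widehat Z,\widehat Z']/(\widehat Z_3)^2\,\dd s$ is the integral over an interval with rational endpoints of a rational function of $(s,p)$ whose denominator does not vanish on the closed cell, since envelope points are finite. Each transition term is $\tfrac12[Z_k^--p,Z_k^+-p]$, with $Z_k^\pm$ rational in $p$. A parameter integral of a function analytic in $(s,p)$ over an interval with analytic endpoints is analytic in $p$. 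The main obstacle is checking that the integrand stays analytic on the \emph{closed} cell uniformly for $p$ in compact subsets of $U$. Concretely, the envelope third coordinate $\widehat Z_3$ must not vanish at cell endpoints, and the chosen lift $\boldsymbol\ell(s)$ must stay non-zero. We will deduce both from Theorem~\ref{thm:discrete-properties}(d), which puts all envelope points in $P$ and hence at finite distance. We will also use $\Delta_0+s\Delta_1\neq0$ on the open cell, which extends to the closed cell because the chord still meets side $e_j$ there. If a cell degenerates to a point, as in Propositions~\ref{prop:adjacent-pencil} and \ref{prop:opposite-pencil}, the formula remains valid with zero arc contribution, so no case split is needed.

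For the plateau statement, let $M=\max\mathcal E_P$. Since $P$ is not centrally symmetric, Proposition~\ref{prop:art-invariant-properties} gives $M>0$. Suppose $\Int(\mathcal C_{\ART}(P))$ is non-empty. The arrangement $\mathcal A_P$ is a finite union of lines and has empty interior, so the open set $\Int(\mathcal C_{\ART}(P))$ meets some chamber $U$ in a non-empty open set $W$. On $W$ we have $\mathcal E_P\equiv M$. Since $\mathcal E_P|_U$ is real analytic and $U$ is connected, the identity theorem gives $\mathcal E_P|_U\equiv M$.

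For the consequence, assume no chamber branch is constant. The previous paragraph then shows $\Int(\mathcal C_{\ART}(P))=\varnothing$. For the measure statement, write
$$
  \mathcal C_{\ART}(P)\subset(\mathcal A_P\cap P)\cup\bigcup_U\{p\in U:\mathcal E_P(p)=M\}.
$$
The first set is null because it lies in finitely many lines. Each set in the union is the zero set of the non-constant real-analytic function $\mathcal E_P-M$ on the connected set $U$, and such a zero set has Lebesgue measure zero by the standard result on zero sets of analytic functions. There are finitely many chambers, so $\mathcal C_{\ART}(P)$ has two-dimensional Lebesgue measure zero.
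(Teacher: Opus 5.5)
Your overall structure matches the paper's. The side-pair combinatorics is constant on each chamber, and $\mathcal E_P$ is real analytic there. The identity theorem then turns an interior point of $\mathcal C_{\ART}(P)$ into a constant chamber branch, with positivity from Proposition~\ref{prop:art-invariant-properties}. Finitely many null analytic level sets, together with the null set $\mathcal A_P\supset\partial P$, give measure zero.

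The analyticity step is done differently:
\begin{itemize}
\item You stay in the affine edge parameter $s$ and integrate the rational function $[\widehat Z,\widehat Z']/\widehat Z_3^2$ between limits rational in $p$. You add the transition terms $\frac12[Z_k^--p,Z_k^+-p]$ explicitly.
\item The paper passes to the normal angle by the real-analytic inverse function theorem. It integrates $\frac12(q^2-q_\theta^2)$ between analytic angles $\theta_k(p)$, the segment terms having cancelled in Theorem~\ref{thm:polygonal-art-area}.
\end{itemize}
Your route is more explicit and avoids the inverse function theorem. It exhibits each chamber branch as a sum of integrals of rational functions with rational limits, plus rational terms. The paper's route reuses the support-function energy formula.

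There is a gap exactly at the step you flag as the main obstacle. Theorem~\ref{thm:discrete-properties}(d) gives $\widehat Z_3(s)\neq0$ only at parameters where $\widehat Z(s)=\boldsymbol\ell(s)\times\boldsymbol\ell'(s)$ is itself non-zero. It says nothing about stationary points of the dual curve.

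On a non-degenerate cell this is harmless, since $\det(C_0,C_1,C_2)=\det(\boldsymbol\ell_0,\boldsymbol\ell_1,\boldsymbol\ell_2)^2\neq0$ forces $\widehat Z(s)\neq0$. The problem is at basepoints with $\beta_{ij}(p)=0$. These form the line through $\widetilde\sigma_i^{-1}(S^*)$ and $\widetilde\sigma_j^{-1}(S^*)$, which is not part of $\mathcal A_P$ and in general cuts through a chamber. There $\widehat Z(s)=\phi(s)\widehat Q$ with $\phi$ a scalar polynomial of degree at most two, and (d) gives no control on the zeros of $\phi$. If $\phi$ vanished on the closed cell at such a $p_0$, your integrand would be $0/0$ there. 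Analyticity across that line, which the identity theorem on the connected chamber needs, would then not follow. So ``no case split is needed'' hides the one case that requires an argument.

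The paper supplies this argument. The cell projectivity has determinant $-\mu_i\mu_juv\neq0$, so the dual curve stays regular on $\beta_{ij}=0$. Proposition~\ref{prop:polygonal-direction} gives a non-vanishing angular derivative.

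In your framework the cleanest repair bypasses (d). Write $\boldsymbol\ell=(a,b,c)$; then $\widehat Z_3=ab'-ba'=(a^2+b^2)\,\dd\theta/\dd s$.
\begin{itemize}
\item On the closed cell, the reflected endpoint $P_{i+n}+se_{i+n}$ moves with velocity $e_{i+n}$.
\item The reflected chord never lies in $L_{i+n}$. Otherwise both original endpoints would lie in the closed side $e_i$, and the chord could not pass through $p$.
\item The strict inequality in the proof of Proposition~\ref{prop:polygonal-direction} then gives $\widehat Z_3\neq0$ up to both cell endpoints. By compactness of the closed interval this holds uniformly for $p$ near $p_0$.
\end{itemize}
Similarly, $\boldsymbol\ell\neq0$ follows from $\Delta_0+s\Delta_1\neq0$ and the distinctness of the reflected endpoints, not from (d).
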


\begin{proof}
Fix a~connected chamber
$U\subset\Int(P)\setminus\mathcal A_P$.
No line through a~point of $U$ contains two vertices.  Hence the~cyclic order of the~$2n$ vertex directions is fixed throughout $U$, all transition directions are distinct, and the~same two open sides support each combinatorial cell.

We verify analyticity locally near an~arbitrary $p_0\in U$.  On a~fixed cell, use the~affine endpoint parameter $s$ from Theorem~\ref{thm:exact-cellwise}.  The~coefficients of the~reflected line in \eqref{eq:quadratic-dual-cell} are polynomial in $(p,s)$.  The~underlying projectivity between the~two reflected side-lines has non-zero determinant. In the~non-parallel normal form this determinant is
$$
  -\mu_i\mu_j u v\neq0
$$
by \eqref{eq:cell-projectivity-determinant}, and the~parallel case is covered by the~explicit formula in Proposition~\ref{prop:opposite-pencil}.  Thus the~dual line curve remains regular even on the~locus $\beta_{ij}(p)=0$, where its envelope degenerates to a~point.

By Proposition~\ref{prop:polygonal-direction}, the~normal angle has non-zero derivative on every open cell.  The~real-analytic inverse function theorem therefore permits the~line equation to be written locally in the~form
$$
  n(\theta)\cdot(z-p)=q(p,\theta),
$$
where $q$ is jointly real analytic in $(p,\theta)$ up to each endpoint from the~appropriate side.  A~transition direction is obtained from the~line through $p$ and a~fixed vertex.  Its second boundary intersection, its reflected line, and hence its output normal angle $\theta_k(p)$ are rational, followed by a~local analytic choice of angle.  They are consequently real analytic near $p_0$.  The~absence of lines through two vertices ensures that two such endpoints do not collide inside $U$.

By Theorem~\ref{thm:polygonal-art-area}, the~total signed area is the~sum of the~cell integrals
$$
  \frac12
  \int_{\theta_k(p)}^{\theta_{k+1}(p)}
  \left(q(p,\theta)^2-q_\theta(p,\theta)^2\right)
  \,\dd\theta,
$$
each of which is real analytic in $p$ because both the~integrand and the~two limits are real analytic.  The~transition-segment contributions have already been accounted for in that formula by cancellation of the~cellwise endpoint terms.  Summing the~finitely many cell integrals proves that $\mathcal E_P$ is real analytic near $p_0$, and hence on all of $U$.

Suppose that the~maximising locus contains an~open disc.  Removing the~finitely many lines of $\mathcal A_P$ leaves a~non-empty open subset of that disc in some chamber $U$.  On this open subset the~analytic function $\mathcal E_P-\max_P\mathcal E_P$ vanishes.  The~real-analytic identity theorem makes it vanish on all of $U$.  The~maximum is positive by Proposition~\ref{prop:art-invariant-properties}, because $P$ is not centrally symmetric.

For the~last assertion, in every chamber a~non-constant analytic branch has a~measure-zero level set at its maximum.  There are only finitely many chambers, and $\mathcal A_P$ itself has measure zero.
\end{proof}

\begin{example}
\label{ex:two-art-energy-maximisers}
For $k=0,\ldots,9$, put
$$
  u_k
  =
  \left(
  \cos\frac{k\uppi}{5},
  \sin\frac{k\uppi}{5}
  \right)
$$
and define the~support polygon
$$
  P
  =
  \bigcap_{k=0}^{9}
  \{x\in\R^2:u_k\cdot x\leq h_k\},
$$
where
$$
  (h_0,\ldots,h_9)
  =
  \left(
  1,1,\frac{23}{20},1,\frac{23}{20},
  \frac65,\frac{23}{20},1,\frac{23}{20},1
  \right).
$$
All ten support lines are active.  Since $u_{k+5}=-u_k$, opposite sides are parallel, and the~equal angular spacing makes every interior angle equal to $4\uppi/5$.  The~support data are invariant under $(x,y)\mapsto(x,-y)$, but the~Wigner caustic does not reduce to one point, so $P$ is not centrally symmetric.

Exact cellwise integration followed by a~global grid and independent multi-start searches identifies two symmetric numerical maximiser,
\begin{equation}\label{eq:two-art-maximisers}
  p_\pm
  \simeq
  (0.24271,\ \pm0.50679),
  \qquad
  \frac{\mathcal E_P(p_\pm)}{A(P)}
  \simeq0.25296.
\end{equation}
The~reflection symmetry forces the~two computed values to agree.  At the~tested resolution no further component of the~top superlevel sets was found, and hence
  $\mathcal C_{\ART}^{\mathrm{num}}(P)=\{p_-,p_+\}$
(see~Figure~\ref{fig:art-energy-nonunique-examples}).
This is numerical evidence only: it does not certify that $p_\pm$ are global maximisers or that they exhaust the~complete maximising locus.

Both candidate points lie in the~relative interior of the~same edge of $\CSS(P)$.  Their distance from the~nearest $\CSS$ vertex is approximately $0.24018$, while their distance from $\AreaEvolute(P)$ is approximately $0.48199$.  Thus neither candidate is an~edge-cusp vertex of the~CSS or of the~Wigner caustic.  The~computation therefore suggests that an~ART-energy centre set may be finite, non-singleton, and disconnected, but these three properties are not claimed here as rigorously certified conclusions of the~example.
\end{example}

\begin{example}
\label{ex:art-energy-maximiser-off-fronts}
Let
$$
  v_k
  =
  \left(
  \cos\frac{k\uppi}{3},
  \sin\frac{k\uppi}{3}
  \right),
  \qquad k=0,\ldots,5,
$$
and set
$$
  \widetilde P
  =
  \bigcap_{k=0}^{5}
  \{x\in\R^2:v_k\cdot x\leq a_k\},
  \qquad
  (a_0,\ldots,a_5)
  =
  \left(1,\frac{11}{10},\frac9{10},\frac65,\frac45,1\right).
$$
Equivalently, the~vertices are
\begin{align*}
 \biggl(&
 \left(1,-\frac1{\sqrt3}\right),
 \left(1,\frac6{5\sqrt3}\right),
 \left(\frac15,\frac2{\sqrt3}\right),
 \left(-\frac65,\frac3{5\sqrt3}\right),
 \left(-\frac65,-\frac2{5\sqrt3}\right),
 \left(\frac15,-\frac9{5\sqrt3}\right)
 \biggr).
\end{align*}
It is a~$\CPPOS$ with all interior angles equal to $2\uppi/3$ and
$$
  A(\widetilde P)=\frac{144}{25\sqrt3}.
$$

The~numerical maximiser is
$$
  \widetilde p_{\ART}
  \simeq
  (-0.22566,0.05774),
  \qquad
  \delta_{\ART}(\widetilde P)
  \simeq0.17388.
$$
The~eigenvalues of the~finite-difference Hessian of the~normalised energy at this point are approximately
$-0.09392$ and $-0.07737$, so the~computed maximum is smooth and strict.  Moreover,
$$
  \operatorname{dist}
  \bigl(\widetilde p_{\ART},\CSS(\widetilde P)\bigr)
  \simeq0.11105,
  \qquad
  \operatorname{dist}
  \bigl(\widetilde p_{\ART},\AreaEvolute(\widetilde P)\bigr)
  \simeq0.12566.
$$
Thus this maximiser lies on neither of the~two polygonal fronts, and hence in particular on neither cusp set.  The~global grid and multi-start audit found no competing maximum, so at the~stated resolution
$\mathcal C_{\ART}^{\mathrm{num}}(\widetilde P)=\{\widetilde p_{\ART}\}$
(see Figure~\ref{fig:art-energy-nonunique-examples}).
\end{example}

\begin{figure}[ht]
  \centering
  \includegraphics[width=\linewidth]{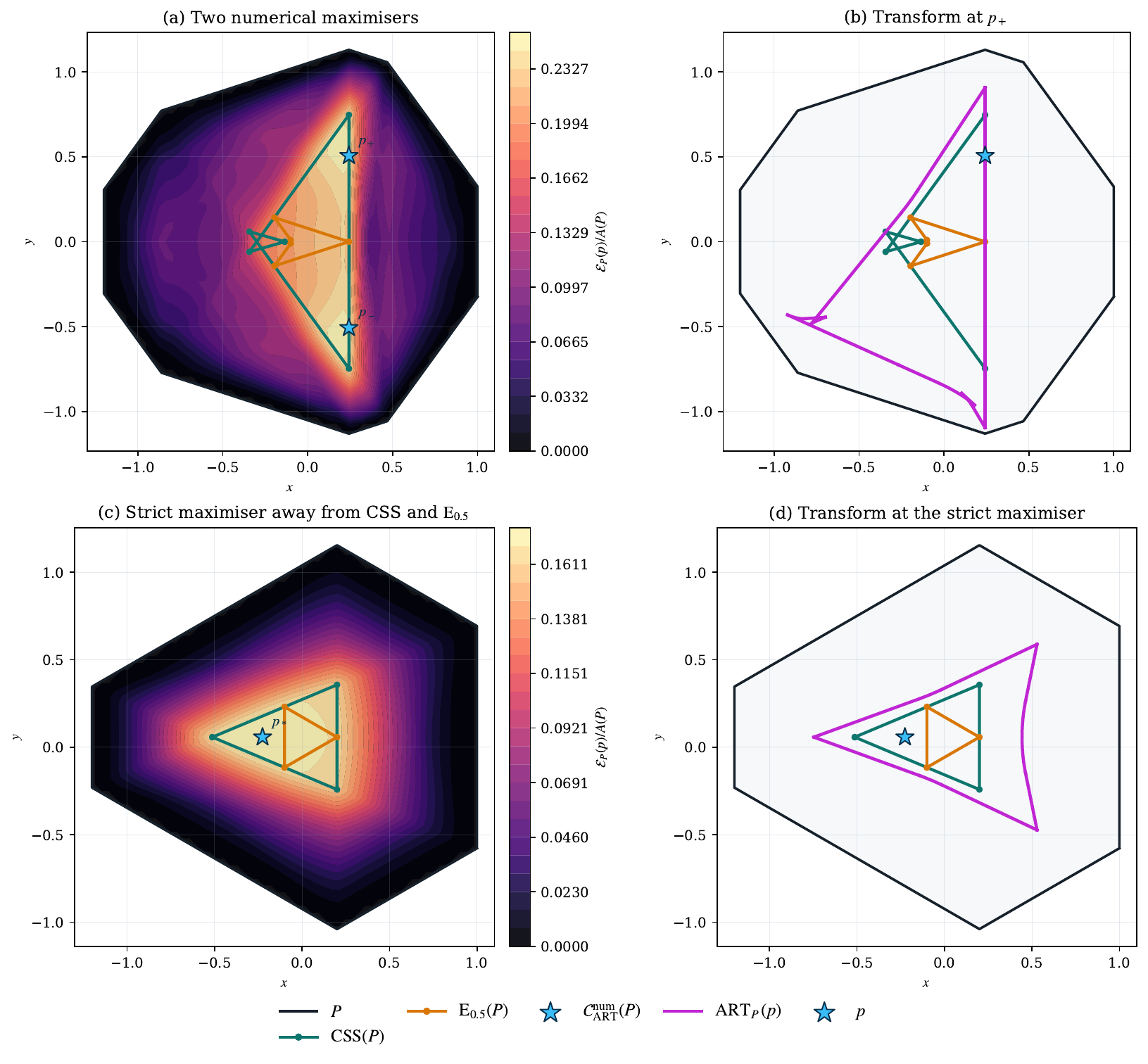}
  \caption{Two numerically observed behaviours of the~ART-energy centre set.  Top: the
axis-symmetric decagon of Example \ref{ex:two-art-energy-maximisers}, with two symmetric numerical maximisers, and the~completed $\ART$ at $p_+$.  Bottom: the~hexagon of Example \ref{ex:art-energy-maximiser-off-fronts}, whose computed strict maximiser candidate lies on neither the~CSS nor the~Wigner caustic, together with its completed ART}
  \label{fig:art-energy-nonunique-examples}
\end{figure}

\begin{conjecture}
\label{conj:no-noncentral-art-plateau}
If an~oval or a~$\CPPOS$ $C$ is not centrally symmetric, then
$\Int\bigl(\mathcal C_{\ART}(C)\bigr)=\varnothing$.
Equivalently, $C$ has an~ART-energy centre set with non-empty interior if and only if it is centrally symmetric, in which case
$\mathcal C_{\ART}(C)=\overline{\Int(C)}$.
\end{conjecture}

\begin{remark}
No non-centrally symmetric counterexample to Conjecture \ref{conj:no-noncentral-art-plateau} was found in the~numerical families tested here.  Nevertheless, Proposition~\ref{prop:polygonal-plateau-alternative} does not by itself exclude an~exceptional positive constant branch on one bounded arrangement chamber.  The~conjecture is therefore kept separate from the~proved zero-energy characterisation in Proposition~\ref{prop:art-invariant-properties}.
\end{remark}

\begin{conjecture}
\label{conj:ART-CSS-area}
For every oval or $\CPPOS$ $C$ and every $p\in\Int(C)$,
$\mathcal E_C(p)\leq
4\left|A^*\bigl(\CSS(C)\bigr)\right|$.
The~constant $4$ is sharp in a~limiting sense.
\end{conjecture}

\begin{remark}
Exact cellwise tests on dozens of varied $\CPPOS$ families found no violation.  Targeted families approaching central symmetry produced ratios as high as $3.99844$, which suggests both the~constant and its limiting sharpness.
\end{remark}

\begin{conjecture}
\label{conj:art-css-hull}
Let $C$ be either an~oval or a~CPPOS, and assume that it is not
centrally symmetric.  Then
$\mathcal C_{\ART}(C)\subseteq
\operatorname{conv}\bigl(\CSS(C)\bigr)$.
Equivalently, no global maximiser of the~ART-area energy lies strictly
outside the~convex hull of the~centre symmetry set.
\end{conjecture}

\begin{lemma}\label{lem:compatible-smoothing}
Every $\CPPOS$ $P$ admits a~sequence of smooth strictly convex ovals $C_\eps$ and orientation-preserving homeomorphisms
  $h_\eps:\partial P\rightarrow C_\eps$
such that, as $\eps\to0$,
\begin{equation}\label{eq:compatible-smoothing}
  h_\eps\rightarrow\operatorname{id}_{\partial P},
  \qquad
  \iota_{C_\eps}\circ h_\eps
  -
  h_\eps\circ\sigma_P
  \rightarrow0
\end{equation}
uniformly.  On compact subsegments of the~open sides the~convergence is of first order after using affine edge parameters.
\end{lemma}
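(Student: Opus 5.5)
We will build $C_\eps$ by smoothing the support data of $P$ in a way that is compatible with the parallel-side structure, and define $h_\eps$ through the normal-angle parametrisation. Let $u_0,\dots,u_{2n-1}$ be the outer unit normals of the sides, so that $u_{i+n}=-u_i$, and let $h_P$ be the support function of $P$. We take a smooth, even, non-negative mollifier $\phi_\eps$ on $\R/2\uppi\mathbb Z$ supported in $[-\eps,\eps]$ and consider the support function
$$
  h_\eps=h_P*\phi_\eps+\eps.
$$
Here the added constant $\eps$ corresponds to Minkowski addition of a disc of radius $\eps$. The mollification is a Minkowski average of rotated copies of $P$, so it is still a convex body. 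Adding the disc makes $h_\eps+h_\eps''\geq\eps>0$, so $C_\eps$ is an oval. We have $h_\eps\to h_P$ uniformly, so $C_\eps\to\partial P$ in the Hausdorff metric.

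The crucial point is the choice of $h_\eps$. Points of $C_\eps$ are parametrised by the normal angle $t$ via \eqref{eq:oval-support}, and $\iota_{C_\eps}$ is simply $t\mapsto t+\uppi$. The involution $\sigma_P$, however, is not a normal-angle shift: it matches affine edge parameters, whereas the mollified curve distributes the normal angles of a vertex cone across a short arc. We therefore do not want $h_\eps$ to depend on normal angle alone. Instead we define the correspondence $h_\eps$ on each closed side and each vertex piece so that antipodal parameters match.

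On side $e_i$, for $x=P_i+se_i$, we let $h_\eps(x)$ be the point of $C_\eps$ with normal angle $\arg u_i+\eps(2s-1)\omega$, for a small fixed $\omega<\frac12\min(\text{exterior angles})$. We then fill the remaining vertex arcs monotonically. Because $u_{i+n}=-u_i$ and the formula uses the same $s$ on $e_{i+n}$, we get exactly
$$
  \iota_{C_\eps}(h_\eps(x))=h_\eps(\sigma_P(x))
$$
on the sides. On the vertex arcs we also use the same angular parametrisation at $P_i$ and $P_{i+n}$, whose normal cones are antipodal. So the second condition in \eqref{eq:compatible-smoothing} holds identically, which is stronger than required. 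It remains to prove $h_\eps\to\operatorname{id}$ uniformly, with a first-order rate on interior compact subsegments.

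With the plain mollifier, the points of $C_\eps$ at normal angles near $\arg u_i$ concentrate near the midpoint region in the wrong way. Mollification blurs the side into a tiny curved arc whose position as a function of normal angle is governed by $h_\eps'$, not by $s$. This is the main obstacle. The fix I would try first is to replace mollification by a tailored construction: take $C_\eps$ to be the boundary of the Minkowski sum of $P$ with a small disc, then perturb each flat side into a very flat circular-type arc of prescribed curvature profile. On the $i$th side we prescribe the radius of curvature as a function of normal angle $t$ on $[\arg u_i-\eps\omega,\arg u_i+\eps\omega]$, namely
$$
  \rho(t)=\frac{|e_i|}{2\eps\omega}.
$$
This makes arc length affine in $t$, and hence the point at angle $\arg u_i+\eps(2s-1)\omega$ lies within $O(\eps)$ of $P_i+se_i$. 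On the vertex arcs we prescribe radius $\eps$.

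The closing condition $\int\rho(t)n(t)\,\dd t\neq0$ of a curvature-prescribed curve, that is the requirement that $\int\rho(t)\tau(t)\,\dd t$ vanish, must be checked. Antipodal sides give contributions $|e_i|$ and $|e_{i+n}|=\mu_i|e_i|$ in opposite directions, matching exactly the closing condition of $P$ itself up to $O(\eps)$ errors from the angular spread. These errors are corrected by an $O(\eps)$ adjustment of the flat-side profiles that keeps the profiles affine in $t$, by changing only the constant. With positive $\rho$ this gives an oval. The uniform estimate $|h_\eps(x)-x|=O(\eps)$ follows by integrating $\rho\tau$. On compact interior subsegments the affine dependence yields the first-order statement, while near vertices only uniform $o(1)$ convergence is obtained, as the lemma allows.
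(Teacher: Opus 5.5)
Your key idea is the paper's. You parametrise $C_\eps$ by normal angle and give the affine parameter $s$ the same angular offset on $e_i$ and on $e_{i+n}$, so that $\iota_{C_\eps}\circ h_\eps=h_\eps\circ\sigma_P$ holds identically. You then build $C_\eps$ from a prescribed radius of curvature whose closing condition reduces to $\sum_i e_i=0$.

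\textbf{First gap: $h_\eps$ is not a homeomorphism.} The closed sides already exhaust $\partial P$, and you send all of $e_i$ onto the window $[\alpha_i-\eps\omega,\alpha_i+\eps\omega]$, where $\alpha_i=\arg u_i$. So the vertex $P_{i+1}$ gets two different images, at angles $\alpha_i+\eps\omega$ and $\alpha_{i+1}-\eps\omega$. Nothing in $\partial P$ is left to map onto the vertex arcs of $C_\eps$: a vertex is a single point, so there is nothing to ``fill monotonically''. You must reserve shrinking vertex neighbourhoods in the domain:
\begin{itemize}
\item use the window only for $\kappa\leq s\leq1-\kappa$, with $\widetilde s=(s-\kappa)/(1-2\kappa)$ in place of $s$;
\item map the $\kappa$-neighbourhood of each vertex monotonically onto its vertex arc, choosing these maps for $0\leq i<n$ and transporting them to $i+n$ by $t\mapsto t+\uppi$ (antipodal gaps have equal length);
\item let $\kappa=\kappa_\eps\to0$.
\end{itemize}
The uniform error is then $O(\eps+\kappa_\eps)$ rather than $O(\eps)$. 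The paper takes $\kappa_\eps=\sqrt\eps$.

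\textbf{Second gap: your final curve is not smooth.} With $\rho=\ell_i/(2\eps\omega)$ on the windows and $\rho=\eps$ on the vertex arcs, the curvature jumps at every window endpoint. So $C_\eps$ is only $C^{1,1}$, not an oval in the required sense.

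The obstacle that made you abandon mollification comes only from insisting that the angle be linear in $s$. Your first curve, with support function $h_P*\phi_\eps$ plus a constant, has radius of curvature $\phi_\eps*\rho_P+\mathrm{const}$, where $\rho_P=\sum_i\ell_i\delta_{\alpha_i}$. This is the paper's curve. The paper keeps it and defines the angle $H$ on the central part of $e_i$ through the cumulative distribution of the kernel:
$\int_{\alpha_i-\eps}^{H}\eta_\eps(u-\alpha_i)\,\dd u=\widetilde s$. Then:
\begin{itemize}
\item $\gamma_\eps(H)-\gamma_\eps(\alpha_i-\eps)=\widetilde s\,e_i+O(\eps)$ for any kernel;
\item the $s$-derivative is $e_i+O(\eps+\kappa_\eps)$ on compact subsegments;
\item antipodal compatibility is untouched, because this distribution function involves only the kernel, not the weights $\ell_i$ and $\ell_{i+n}=\mu_i\ell_i$.
\end{itemize}
Your box profile is exactly the case where this distribution function is linear, which is why it cannot be smooth. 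Alternatively, you could round off the jumps while keeping $\rho$ exactly constant on a central part of each window.

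\textbf{Minor point on closing.} No correction of the closing condition is needed, and the condition is $\int\rho\tau\,\dd t=0$, not $\int\rho n\,\dd t\neq0$. Each window contributes $\frac{\sin(\eps\omega)}{\eps\omega}e_i$, and these sum to zero. The vertex arcs of constant radius $\eps$ occupy a set of angles invariant under $t\mapsto t+\uppi$, on which $\int\tau\,\dd t=0$.
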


\begin{proof}
Let $\alpha_i$ be the~outward normal angle of $e_i$ and let $\ell_i=|e_i|$.  The~curvature-radius measure of the~polygon is
$$
  \rho_P=\sum_{i=0}^{2n-1}\ell_i\delta_{\alpha_i}.
$$
Indeed, integrating $\tau(\theta)=(-\sin\theta,\cos\theta)$ against this measure gives the~successive edge vectors
$\ell_i\tau(\alpha_i)=e_i$.

Choose an~even function
$\eta\in C^\infty_c((-1,1))$,
positive on $(-1,1)$ and satisfying
$\int_{-1}^1\eta=1$.
Let
$$
  \eta_\eps(t)
  =
  \frac1\eps\eta\left(\frac t\eps\right),
$$
extended $2\uppi$-periodically, and take $\eps$ smaller than one quarter of the~least angular gap between consecutive side normals.  Set
\begin{equation}\label{eq:smoothing-radius}
  \rho_\eps
  =
  \eta_\eps*\rho_P+\eps^2.
\end{equation}
The~function $\rho_\eps$ is smooth and strictly positive.  Moreover,
$$
  \int_0^{2\uppi}\rho_\eps(\theta)\tau(\theta)\,\dd\theta=0.
$$
For the~convolution term this follows because convolution with an~even kernel multiplies both first Fourier modes by the~same scalar, while $\sum_i e_i=0$. The~constant term also integrates to zero.  Hence
$$
  \gamma_\eps(\theta)
  =
  q_\eps+
  \int_0^\theta\rho_\eps(u)\tau(u)\,\dd u
$$
is a~smooth closed strictly convex curve after a~suitable translation $q_\eps$.  The~curvature measures converge weakly to $\rho_P$, and therefore $C_\eps$ converges to $P$.

We now construct the~boundary identifications and record the~estimates explicitly.  Parametrise the~polygonal boundary by
$$
  b(i+s)=P_i+se_i,
  \qquad
  0\leq s\leq1,
  \qquad
  i\in\mathbb Z/(2n\mathbb Z).
$$
Then
\begin{equation}\label{eq:boundary-parameter-shift}
  \sigma_P(b(r))=b(r+n).
\end{equation}
The~$\CPPOS$ condition gives
$\alpha_{i+n}=\alpha_i+\uppi$ and $\ell_{i+n}=\mu_i\ell_i$.

Put $\kappa_\eps=\sqrt\eps$.  We define an~orientation-preserving homeomorphism
$$
  H_\eps:
  \mathbb R/(2n\mathbb Z)
  \rightarrow
  \mathbb R/(2\uppi\mathbb Z)
$$
as follows.  On the~central part
$[i+\kappa_\eps,i+1-\kappa_\eps]$
of the~$i$th edge, let $H_\eps(i+s)$ be the~unique angle in
$[\alpha_i-\eps,\alpha_i+\eps]$
such that
\begin{equation}\label{eq:smoothing-cumulative}
  \int_{\alpha_i-\eps}^{H_\eps(i+s)}
  \eta_\eps(u-\alpha_i)\,\dd u
  =
  \frac{s-\kappa_\eps}{1-2\kappa_\eps}.
\end{equation}
On
$[i+1-\kappa_\eps,i+1+\kappa_\eps]$
extend $H_\eps$ strictly monotonically across the~normal-angle gap
$[\alpha_i+\eps,\alpha_{i+1}-\eps]$.
Make these choices for $0\leq i<n$ and impose
\begin{equation}\label{eq:smoothing-antipodal-H}
  H_\eps(r+n)=H_\eps(r)+\uppi.
\end{equation}
The~paired normal gaps have the~same length, so the~extensions can be chosen to satisfy \eqref{eq:smoothing-antipodal-H}.  Define
\begin{equation}\label{eq:smoothing-homeomorphism}
  h_\eps(b(r))
  =
  \gamma_\eps(H_\eps(r)).
\end{equation}
Both $b$ and $\gamma_\eps$ are orientation-preserving boundary parametrisations, so $h_\eps$ is an~orientation-preserving homeomorphism.

Choose the~translation $q_\eps$ so that
$\gamma_\eps(\alpha_0-\eps)=P_0$.
Integrating successively around the~curve then gives, uniformly in $i$,
$$
  \gamma_\eps(\alpha_i-\eps)=P_i+o(1).
$$
On the~central part of the~$i$th edge, put
$$
  \widetilde s
  =
  \frac{s-\kappa_\eps}{1-2\kappa_\eps}.
$$
Using \eqref{eq:smoothing-radius} and \eqref{eq:smoothing-cumulative}, we obtain
\begin{align*}
  \gamma_\eps(H_\eps(i+s))
  -
  \gamma_\eps(\alpha_i-\eps)
  &=
  \ell_i
  \int_{\alpha_i-\eps}^{H_\eps(i+s)}
  \eta_\eps(u-\alpha_i)\tau(u)\,\dd u
  +O(\eps^3)
  \\
  &=
  \widetilde s\,\ell_i\tau(\alpha_i)+O(\eps)
  =
  \widetilde s\,e_i+O(\eps).
\end{align*}
Since $\widetilde s-s=O(\kappa_\eps)$, this gives
$$
  h_\eps(P_i+se_i)
  =
  P_i+se_i+O(\eps+\kappa_\eps)
$$
on the~central part.  On a~vertex neighbourhood of affine size $\kappa_\eps$, both the~polygonal point and the~rounding arc are
$O(\eps+\kappa_\eps)$
from the~same vertex. The~motion caused there by the~positive term $\eps^2$ in \eqref{eq:smoothing-radius} is $O(\eps^2)$.  Therefore
\begin{equation}\label{eq:smoothing-uniform-id}
  h_\eps\rightarrow
  \operatorname{id}_{\partial P}
  \quad\text{uniformly}.
\end{equation}

On a~compact subinterval of $0<s<1$, write
$H_\eps(i+s)=\alpha_i+\eps\xi(\widetilde s)$,
where
$\int_{-1}^{\xi(\widetilde s)}\eta=\widetilde s$.
There $\eta(\xi)$ is bounded away from zero, and differentiation gives
\begin{align*}
  \frac{\dd}{\dd s}
  h_\eps(P_i+se_i)
  &=
  \rho_\eps(H_\eps(i+s))
  \tau(H_\eps(i+s))
  \frac{\eps}
  {(1-2\kappa_\eps)\eta(\xi(\widetilde s))}
  \\
  &=
  e_i+O(\eps+\kappa_\eps).
\end{align*}
This proves the~claimed first-order convergence on compact subsegments of every open side.

Finally, the~parallel-tangent partner of
$\gamma_\eps(\theta)$
is exactly
$\gamma_\eps(\theta+\uppi)$.
Equations \eqref{eq:boundary-parameter-shift},
\eqref{eq:smoothing-antipodal-H}, and
\eqref{eq:smoothing-homeomorphism} therefore give the~stronger identity
$$
  \iota_{C_\eps}\circ h_\eps
  =
  h_\eps\circ\sigma_P
  \quad\text{on }\partial P.
$$
Together with \eqref{eq:smoothing-uniform-id}, this proves \eqref{eq:compatible-smoothing}.
\end{proof}

\begin{proposition}
\label{prop:discrete-chambers}
Assume that $p\in\Int(P)$ is generic in the~following explicit sense: all centred chords with midpoint $p$ are isolated, have endpoints in open non-parallel sides, and all fixed chords through $p$ have endpoints in open opposite sides and are transverse solutions of the~incidence equation.  Such points form an~open dense subset of $\Int(P)$. Denote by $M_P(p)$ the~number of chords of $P$ with midpoint $p$, and by $T_P(p)$ the~number of fixed chords of $\Phi_P$ through $p$.  Then
\begin{equation}\label{eq:discrete-count-bound}
  M_P(p)\leq T_P(p).
\end{equation}
Every one of the~$T_P(p)$ fixed chords is a~common generating line of $\ART_P(p)$ and $\CSS(P)$.  Thus the~two completed duals have at least $M_P(p)$ common generating lines.
\end{proposition}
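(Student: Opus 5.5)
We plan to mirror the smooth argument behind \eqref{eq:midpoint-fixed-count}, replacing the polar radial function by its polygonal analogue. First we dispose of genericity. The non-generic basepoints lie on finitely many explicit loci. These are the midpoint curves of the vertices, namely the Wigner-caustic vertices $M_i$ together with the segments $\frac12(P_k+[P_j,P_{j+1}])$. They also include the Wigner-caustic edges whose chords are not isolated, the lines $d_i$ through the $\CSS$ vertices, the lines through two vertices, and the points where the incidence equation for fixed chords is tangent. Each of these loci is a finite union of segments and points. Its complement is therefore open and dense.

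Next we set up a radial asymmetry function. Write $\partial P$ in polar form about $p$ as $\gamma_p(\varphi)=r_p(\varphi)n(\varphi)$, with $r_p$ continuous, positive and piecewise smooth (in fact piecewise of the form $h/\cos(\varphi-\alpha)$). Define $a_p(\varphi)=\log\bigl(r_p(\varphi+\uppi)/r_p(\varphi)\bigr)$ as in \eqref{eq:radial-asymmetry}. This function is anti-periodic, and its zeros are exactly the chords through $p$ with midpoint $p$. Genericity makes these zeros isolated, and since the endpoints lie in open sides, they are sign changes. Hence $a_p$ has exactly $M_P(p)$ sign changes in a half-period.

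For the fixed chords, we note that the chord in direction $\varphi$ has endpoints $x=\gamma_p(\varphi)$ and $y=\gamma_p(\varphi+\uppi)$. It is fixed exactly when $y=\sigma_P(x)$, that is, when the endpoints lie on opposite sides $e_i$, $e_{i+n}$ with equal affine parameter. Inside a cell supported on $e_i$ and $e_{i+n}$, the lines through $p$ and the homothety centre $D_i$ show that the ratio $r_p(\varphi+\uppi)/r_p(\varphi)$ is a monotone (Möbius) function of the affine parameter. Its derivative vanishes nowhere, except that the chord through $D_i$ plays the role of the fixed chord there. So we would identify the fixed chords through $p$ with the ``critical'' directions of $a_p$ in the generalised sense: points where $a_p'$ vanishes, together with the points at vertex transitions where the one-sided derivatives change sign.

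The main obstacle is this generalised Rolle step on a piecewise smooth function. Between two consecutive sign changes of $a_p$, the function must attain an interior extremum. Across vertex transitions we will check that a local extremum at a vertex direction cannot occur without an actual fixed chord. To do this, we use that at a vertex direction the ratio $r(\varphi+\uppi)/r(\varphi)$ combines the two adjacent one-sided cells monotonically in the same sense as the cyclic ordering used in Proposition~\ref{prop:polygonal-direction}. The sign of $[\gamma_p',\gamma_p'(\cdot+\uppi)]$ should be preserved across transitions except on opposite-side cells containing a fixed chord. If this verification succeeds, each nodal interval of $a_p$ contains a fixed chord. Anti-periodicity then closes up the cycle as in Theorem~\ref{thm:fixed-chord-cusp-bound}, which gives $M_P(p)\le T_P(p)$.

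Finally, the common generating line claim is direct. A fixed chord $L$ through $p$ lies in $\Lambda_p$ and satisfies $\Phi_P(L)=L$, so $L\in\Gamma^*_{P,p}$. By Theorem~\ref{thm:discrete-properties}(b), $L$ is also a generating line of $\CSS(P)$. Combining this with the inequality yields at least $M_P(p)$ common lines.
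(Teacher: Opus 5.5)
\textbf{Genuine gap.} Your plan, a Rolle argument for the polygonal radial asymmetry function $a_p$, differs from the paper's. The paper never analyses $a_p$ on $P$. It takes the compatible smoothings $C_\eps$ of Lemma~\ref{lem:compatible-smoothing}, which satisfy $\iota_{C_\eps}\circ h_\eps=h_\eps\circ\sigma_P$, and shows that the simple zeros of the two incidence functions persist under $C^1$ convergence. This gives $M_{C_\eps}(p)=M_P(p)$ and $T_{C_\eps}(p)=T_P(p)$, and the paper then quotes \eqref{eq:midpoint-fixed-count}. Your route could give a more elementary proof. As written, however, its decisive step is only conditional (``if this verification succeeds'') and rests on a false description of $a_p$.

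On a cell supported on $e_i$ and $e_{i+n}$, the ratio $r_p(\varphi+\uppi)/r_p(\varphi)$ is not a non-constant Möbius function. By the intercept theorem it equals the ratio of the distances from $p$ to the two parallel lines, namely $(1-\tau_i(p))/\tau_i(p)$ or its inverse. Hence $a_p'\equiv0$ on every opposite-side cell, whether or not the line $pD_i$ lies in it. On a cell with $\gamma_p(\varphi)\in e_j$ and $\gamma_p(\varphi+\uppi)\in e_k$ non-parallel, the identity $[\gamma_p'(\varphi),\gamma_p'(\varphi+\uppi)]=r_p(\varphi)r_p(\varphi+\uppi)a_p'(\varphi)$ gives $\operatorname{sign}a_p'=\operatorname{sign}[e_j,e_k]$. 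This sign is positive for $k-j\in\{1,\ldots,n-1\}$ and negative for $k-j\in\{n+1,\ldots,2n-1\}$, modulo $2n$. Since $k-j$ changes by at most one at a transition, the one-sided derivatives never flip sign at a single vertex direction.

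So fixed chords are not ``generalised critical directions'' of $a_p$. Every extremum of $a_p$ is a plateau filling an opposite-side cell, and a plateau by itself yields no fixed chord. Your closing heuristic, that $[\gamma_p',\gamma_p'(\cdot+\uppi)]$ changes sign only across opposite-side cells containing a fixed chord, is in fact true. But it is exactly what must be proved, and nothing in the proposal proves it.

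\textbf{The missing lemma.} Suppose $a_p$ turns on an opposite-side cell. Then $k-j$ runs $n\mp1\to n\to n\pm1$, so the same endpoint crosses both ends of its side. Say $\gamma_p(\varphi+\uppi)$ sweeps the closed segment $[P_{i+n},P_{i+n+1}]$ while $\gamma_p(\varphi)$ stays in the open side $e_i$. For generic $p$ this run is a single cell, because a double transition inside it would place $p$ on a great diagonal, which is a fixed chord with vertex endpoints.

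Let $\operatorname{pr}_p:L_{i+n}\to L_i$ be the central projection through $p$. Then $\widetilde\sigma_i\circ\operatorname{pr}_p$ is an affine self-map of $L_{i+n}$ carrying $[P_{i+n},P_{i+n+1}]$ into its relative interior. It therefore has a fixed point $y^*$ there, and the line through $p$ and $y^*$ is a fixed chord in that cell. Non-turning plateaus may or may not contain one, which is harmless.

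With this lemma the argument closes. On the full circle, each of the $2M_P(p)$ arcs between consecutive simple zeros of $a_p$ contains a turning cell, hence one of the $2T_P(p)$ oriented fixed-chord directions. This gives $M_P(p)\le T_P(p)$.

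\textbf{Minor point.} In your genericity list, non-isolated centred chords fill the flat segments $\mathcal D_i^{\mathrm{flat}}$ of Remark~\ref{rem:flat-discriminant}, not only the Wigner-caustic edges. This does not affect openness or density.
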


\begin{proof}
We first make the~genericity and stability statements precise.  Put
$u(\alpha)=(\cos\alpha,\sin\alpha)$,
where $\alpha$ is taken over one projective period $[0,\uppi]$ with its endpoints identified. The~incidence functions below may change sign at the~identification, but their zero sets are well defined.
The~two intersections of the~line
$p+\mathbb Ru(\alpha)$
with $\partial P$ can be written as
$$
  x_p^+(\alpha)
  =
  p+r_p^+(\alpha)u(\alpha),
  \qquad
  x_p^-(\alpha)
  =
  p-r_p^-(\alpha)u(\alpha),
$$
where $r_p^\pm(\alpha)>0$.
The~radial functions $r_p^\pm$ are continuous and piecewise real analytic, and they are $C^1$ on every directional interval for which the~two endpoints remain in fixed open sides.  Define
\begin{align}
  F_P(\alpha)
  &=
  r_p^+(\alpha)-r_p^-(\alpha),
  \label{eq:polygonal-midpoint-incidence}
  \\
  G_P(\alpha)
  &=
  [x_p^+(\alpha)-p,\,
  \sigma_P(x_p^+(\alpha))-p].
  \label{eq:polygonal-fixed-incidence}
\end{align}
A~zero of $F_P$ is precisely a~chord centred at $p$.  A~zero of $G_P$ means that $\sigma_P(x_p^+(\alpha))$ lies on the~same line through $p$.  Since $p$ is interior and $\sigma_P$ has no fixed point, this point is the~other boundary intersection. Hence the~line is fixed by $\Phi_P$.  Conversely every fixed chord gives a~zero of $G_P$.

On each of the~finitely many side-pair cells, the~functions in
\eqref{eq:polygonal-midpoint-incidence} and
\eqref{eq:polygonal-fixed-incidence}
are rational functions of the~direction coordinate, with coefficients depending polynomially on $p$.  For the~midpoint equation, endpoint solutions, multiple roots, and solutions supported on parallel side pairs are therefore contained in the~zero sets of finitely many non-zero polynomial discriminants and incidence functions.  For the~fixed-chord equation, the~bad cases are endpoint solutions, multiple roots, and an~identically vanishing incidence function.  The~only identically flat midpoint equations occur on the~segments
$\mathcal D_i^{\mathrm{flat}}$
described in Remark \ref{rem:flat-discriminant}. The~corresponding identically fixed pencils occur only at the~points $D_i$.  These exceptional sets have empty interior.  It follows that the~points satisfying the~genericity conditions in the~statement form an~open dense subset of $\Int(P)$.

Choose the~compatible smoothings from Lemma~\ref{lem:compatible-smoothing}.  For all sufficiently small $\eps$, the~point $p$ lies in $\Int(C_\eps)$.  Define $F_\eps$ and $G_\eps$ by the~same formulas, using the~radial intersections with $C_\eps$ and replacing $\sigma_P$ by $\iota_{C_\eps}$.  Uniform convergence in Lemma~\ref{lem:compatible-smoothing} gives
$$
  F_\eps\rightarrow F_P,
  \qquad
  G_\eps\rightarrow G_P
$$
uniformly on the~direction circle.  On every closed directional interval whose limiting endpoints lie in open sides, the~first-order part of that lemma and transversality of a~line through an~interior point with a~supporting side give convergence in $C^1$.

By genericity, the~zeros of $F_P$ and $G_P$ are finite, lie in such open-side intervals, and are simple.  Choose pairwise disjoint small intervals around them.  The~$C^1$ implicit-function argument gives exactly one zero of the~corresponding smoothed incidence function in each such interval.  On the~compact complement, including fixed neighbourhoods of all vertex directions, $|F_P|$ and $|G_P|$ have a~positive lower bound.  Uniform convergence therefore excludes additional zeros there.  Consequently,
$$
  M_{C_\eps}(p)=M_P(p),
  \qquad
  T_{C_\eps}(p)=T_P(p)
$$
for all sufficiently small $\eps$.

For the~smooth oval $C_\eps$, Rolle's theorem applied to the~radial asymmetry function gives
$M_{C_\eps}(p)\leq T_{C_\eps}(p)$
by \eqref{eq:midpoint-fixed-count}.  Passing to the~stable polygonal counts proves \eqref{eq:discrete-count-bound}.  Finally, a~fixed chord through $p$ belongs to the~generating family of both completed duals by Theorem~\ref{thm:discrete-properties}(b) and Definition \ref{def:discrete-art}, which proves the~common-line statement.
\end{proof}

\begin{remark}\label{rem:flat-discriminant}
For a~smooth oval, the~Wigner caustic is the~discriminant of the~equation for chords centred at the~moving basepoint.  A~polygon has an~additional flat-side phenomenon.  For every opposite pair define
$$
  \mathcal D_i^{\mathrm{flat}}
  =
  \left\{
    \frac{x+y}{2}:
    x\in[P_i,P_{i+1}],\
    y\in[P_{i+n},P_{i+n+1}]
  \right\}.
$$
This is a~segment containing the~canonical Wigner caustic edge $[M_i,M_{i+1}]$.  At an~interior point of $\mathcal D_i^{\mathrm{flat}}$ the~centred-chord equation on that parallel side pair has a~continuum of solutions.  In general, $\bigcup_i\mathcal D_i^{\mathrm{flat}}$ is therefore larger than $\AreaEvolute(P)$.

Consequently, the~polygonal Wigner caustic is exactly the~midpoint locus of the~canonical boundary involution \eqref{eq:sigma-p}, but it is not, by itself, the~full discriminant of all arbitrary centred chords. Proposition~\ref{prop:discrete-chambers} is deliberately stated only for generic basepoints and does not assert an~unqualified chamber classification by $\AreaEvolute(P)$.  Away from the~flat-side segments, the~proof shows that $M_P(p)$ is the~stable count inherited from compatible smooth approximations.
\end{remark}

\subsection{Hausdorff convergence}

Let $C$ be a~$C^4$ oval parametrised by tangent angle.  Let $\nu_k\subset[0,\uppi)$ be finite sets of directions, used together with their opposite directions, and let $P_k=\operatorname{PPOS}_{\nu_k}(C)$ be the~circumscribed $\CPPOS$ obtained by intersecting consecutive tangent lines, as in \cite{CraizerTeixeiraSilva,KonicerEtAl}.  Write
$\delta_k=\mesh(\nu_k)$.
We call the~sequence \emph{quasi-uniform} if the~ratio of the~largest to the~smallest consecutive angular gap is bounded independently of $k$.

\begin{lemma}\label{lem:paired-tangent-estimate}
There are continuous, piecewise $C^1$ parametrisations
$r_k:\R/2\uppi\mathbb Z\rightarrow\partial P_k$
such that
\begin{equation}\label{eq:paired-tangent-estimate}
  \sigma_{P_k}(r_k(t))=r_k(t+\uppi),
\end{equation}
and, uniformly in $t$ and in both one-sided derivatives,
\begin{equation}\label{eq:paired-c1}
  r_k(t)=\gamma(t)+O(\delta_k^2),
  \qquad
  r_k'(t)=\gamma'(t)+O(\delta_k).
\end{equation}
\end{lemma}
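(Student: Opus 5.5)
The plan is to build $r_k$ edge by edge, matching tangent angles of $C$ to affine parameters on the sides of $P_k$, and to enforce \eqref{eq:paired-tangent-estimate} through the $\CPPOS$ structure.

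\emph{Edges of $P_k$.} Order the directions of $\nu_k$ together with their opposites as $\alpha_0<\alpha_1<\cdots<\alpha_{2n-1}$, with $\alpha_{i+n}=\alpha_i+\uppi$. The side $e_i$ of $P_k$ lies on the tangent line $T_{\gamma(\alpha_i)}C$. Its endpoints are the intersections of this tangent with the tangents at $\alpha_{i-1}$ and $\alpha_{i+1}$.

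\emph{Parametrisation on a side.} On the angular cell around $\alpha_i$, bounded by the midpoints $\beta_i^\pm$ of the adjacent gaps, I do not use a closest-point projection, because then \eqref{eq:paired-tangent-estimate} would fail. Instead I define $r_k$ to be affine in $t$, sending $[\beta_i^-,\beta_i^+]$ onto the side $[P_i,P_{i+1}]$ with $r_k(\beta_i^\pm)$ the side's endpoints. Indexing vertices so that $P_i=T_{\alpha_{i-1}}\cap T_{\alpha_i}$, the vertex $P_i$ corresponds to $\beta_i^-$. Because $\alpha_{i+n}=\alpha_i+\uppi$, the cells satisfy $\beta_{i+n}^\pm=\beta_i^\pm+\uppi$. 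Then $r_k(t+\uppi)$ is the same affine parameter $s$ on $e_{i+n}$, which is exactly $\sigma_{P_k}(r_k(t))$ by Definition~\ref{def:sigma-p}. This gives \eqref{eq:paired-tangent-estimate}, with $r_k$ continuous and piecewise $C^1$.

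\emph{$C^0$ estimate.} The vertex $T_\alpha\cap T_{\alpha'}$ equals $\gamma$ evaluated near the midpoint angle, up to $O(|\alpha-\alpha'|^2)$. This follows from the Taylor expansion $\gamma(\alpha+h)=\gamma(\alpha)+h\rho\tau+\tfrac{h^2}{2}(\rho'\tau-\rho n)+\dots$, with $\rho$ the radius of curvature, which is smooth since $C$ is $C^4$. Consequently $r_k(\beta_i^\pm)=\gamma(\beta_i^\pm)+O(\delta_k^2)$. On the cell both $r_k$ and $\gamma$ satisfy: $r_k$ is linear interpolation of a curve whose chord differs from $\gamma$ by $O(\delta_k^2)$, and linear interpolation of $\gamma$ between $\beta_i^\pm$ is itself $O(\delta_k^2)$-close to $\gamma$. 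This yields $r_k=\gamma+O(\delta_k^2)$.

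\emph{$C^1$ estimate and main obstacle.} On the cell, $r_k'$ equals $(P_{i+1}-P_i)/(\beta_i^+-\beta_i^-)$. By the vertex estimate this is the difference quotient of $\gamma$ over the cell plus an error of size $O(\delta_k^2)/|\beta_i^+-\beta_i^-|$. The difference quotient is $\gamma'(t)+O(\delta_k)$ by the mean value theorem. The main obstacle is the error term, which is $O(\delta_k)$ only if $|\beta_i^+-\beta_i^-|\gtrsim\delta_k$.

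I would first try quasi-uniformity: the cell length is comparable to the smallest gap, which is $\ge c\,\delta_k$. If the lemma is meant without quasi-uniformity, I would sharpen the vertex estimate. The vertex equals $\gamma(\bar\beta)+O(h^2)$, with an $O(h^2)$ normal component, and the tangential error is third order when $\bar\beta$ is chosen as the exact midpoint. Differencing two such expansions sharing the common angle $\alpha_i$, the $O(h^2)$ terms are smooth in the endpoint angles. The difference is then $O(\delta_k\cdot\text{cell length})$, which gives $r_k'=\gamma'+O(\delta_k)$ in general. I expect this refined expansion, a careful second-order computation with the $C^4$ hypothesis, to be the technical core.
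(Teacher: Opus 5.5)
Your proposal is correct and is essentially the paper's proof. Both parametrise each side affinely over the interval between the midpoints of the adjacent angular gaps, obtain the $C^0$ estimate from Taylor expansion of the tangent-line intersections, use quasi-uniformity to turn the $O(\delta_k^2)$ endpoint errors into an $O(\delta_k)$ derivative error, and get \eqref{eq:paired-tangent-estimate} exactly from the $\uppi$-shift symmetry of the sampled directions with a common affine parameter on paired sides.

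Your fallback paragraph is unnecessary, and its claim that the tangential error is third order at the exact midpoint angle is false: it is $\tfrac{h^2}{24}\rho'$ to leading order for a gap $h$. Quasi-uniformity can nevertheless be dropped more simply. Write $\Delta_j=\alpha_{j+1}-\alpha_j$. The two endpoint errors on the side at $\alpha_i$ are $O(\Delta_{i-1}^2)$ and $O(\Delta_i^2)$, the parameter interval has length $(\Delta_{i-1}+\Delta_i)/2$, and $(\Delta_{i-1}^2+\Delta_i^2)/(\Delta_{i-1}+\Delta_i)\leq\delta_k$.
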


\begin{proof}
Let $t_i$ be consecutive sampled normal angles, put $\Delta_i=t_{i+1}-t_i$, and let $\rho$ be the~radius of curvature of $C$.  The~intersections of the~tangent at $t_i$ with its two neighbouring sampled tangents have tangent coordinates
$$
  -\frac12\rho(t_i)\Delta_{i-1}+O(\delta_k^2),
  \qquad
  \frac12\rho(t_i)\Delta_i+O(\delta_k^2)
$$
relative to $\gamma(t_i)$.  Parametrise the~polygonal side on the~tangent at $t_i$ affinely over the~interval
$$
  \left[
  t_i-\frac{\Delta_{i-1}}2,\,
  t_i+\frac{\Delta_i}2
  \right].
$$
Taylor's formula gives the~first estimate in \eqref{eq:paired-c1}.  Dividing the~edge expansion by the~length of the~parameter interval gives the~second.  Quasi-uniformity ensures that the~$O(\delta_k^2)$ endpoint error divided by the~interval length remains $O(\delta_k)$ uniformly.

The~sampling set is paired by the~shift $t\mapsto t+\uppi$, and the~two corresponding sides are assigned the~same affine parameter.  Hence the~parametrisations can be chosen so that \eqref{eq:paired-tangent-estimate} holds exactly.
\end{proof}

\begin{theorem}\label{thm:hausdorff}
Let $C$ be a~$C^4$ oval and let $p\in\Int(C)$.  If $P_k=\operatorname{PPOS}_{\nu_k}(C)$ is a~quasi-uniform sequence with $\delta_k\to0$, then $d_H\bigl(\ART_{P_k}(p),\ART_C(p)\bigr)\rightarrow0$.
\end{theorem}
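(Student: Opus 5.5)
The plan is to compare the two line families through the support-envelope formula \eqref{eq:support-envelope}. By Proposition~\ref{prop:direction-containment} and Proposition~\ref{prop:polygonal-direction}, both transformed families are parametrised globally by the normal angle $\theta\in\R/\uppi\mathbb Z$. This gives anti-periodic support functions $q_p$ for $C$ and $q_p^{(k)}$ for $P_k$. The completed envelope of $P_k$ is the set of points $p+q_p^{(k)}n+(q_p^{(k)})'\tau$, taken at regular $\theta$ together with the transition segments between the one-sided values. Hence it suffices to prove that
$$
  \|q_p^{(k)}-q_p\|_{C^0}\to0
  \quad\text{and}\quad
  \sup_\theta\bigl|(q_p^{(k)})'_\pm(\theta)-q_p'(\theta)\bigr|\to0,
$$
with the sup taken over both one-sided derivatives. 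Once this holds, every point of $\ART_{P_k}(p)$ is close to $X(\theta)$ for the same $\theta$, including points on transition segments, which lie between two such points by convexity. Conversely, every $X(\theta)$ is approximated, so the Hausdorff distance tends to zero.

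\textbf{Step 1: uniform convergence of the reflected lines.} Use Lemma~\ref{lem:paired-tangent-estimate} to write $\partial P_k=r_k(\R/2\uppi\mathbb Z)$ with $\sigma_{P_k}(r_k(t))=r_k(t+\uppi)$. For a line of $\Lambda_p$, let its endpoints be $r_k(t)$ and $r_k(u_k(t))$; for $C$ they are $\gamma(t)$ and $\gamma(u(t))$, with $u$ given by \eqref{eq:chord-equation}. The reflected chord of $P_k$ then joins $r_k(t+\uppi)$ and $r_k(u_k(t)+\uppi)$, exactly parallel to the smooth construction. Since $r_k=\gamma+O(\delta_k^2)$ and chords through an interior point are uniformly transverse to $C$, the implicit incidence equation gives $u_k(t)=u(t)+O(\delta_k^2)$ uniformly. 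Consequently the homogeneous lifts satisfy $\boldsymbol\ell_k(t)=\boldsymbol\ell(t)+O(\delta_k^2)$. The normal angles then satisfy $\theta_k(t)=\theta(t)+O(\delta_k^2)$, and \eqref{eq:q-from-h} gives $q_p^{(k)}\to q_p$ uniformly.

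\textbf{Step 2: convergence of derivatives; this is the main obstacle.} Differentiate the incidence equation in $t$, using the one-sided derivatives $r_k'=\gamma'+O(\delta_k)$ from \eqref{eq:paired-c1}. Transversality again yields $u_k'=u'+O(\delta_k)$, hence $\boldsymbol\ell_k'=\boldsymbol\ell'+O(\delta_k)$ uniformly, one-sidedly at the cell transitions. The envelope point is $\boldsymbol\ell_k\times\boldsymbol\ell_k'$ by \eqref{eq:dual-envelope}. Because $\boldsymbol\ell\times\boldsymbol\ell'$ has third coordinate bounded away from zero (the smooth ART is compact), the affine envelope points converge uniformly, including both one-sided points at each transition. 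Here the quasi-uniformity hypothesis is essential: it is what makes the derivative estimate in \eqref{eq:paired-c1} uniform. The delicate points are parameters where an endpoint sits at a vertex. There only one-sided derivatives exist, and I would check that the reparametrisation of the pencil by $t$ versus $\theta$ stays uniformly bi-Lipschitz; this follows from the strict monotonicity argument of Proposition~\ref{prop:polygonal-direction}, with bounds inherited from the smooth case.

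\textbf{Step 3: conclusion.} Each transition segment joins the two one-sided envelope points $Z_k^\pm$. Both of these lie within $O(\delta_k)$ of $X(\theta_k)$, so the whole segment does too. Combined with density of the parameters, this proves $d_H(\ART_{P_k}(p),\ART_C(p))=O(\delta_k)\to0$.
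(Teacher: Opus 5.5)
Your proposal is correct and is essentially the paper's proof. The paper likewise uses the paired parametrisations of Lemma~\ref{lem:paired-tangent-estimate} and controls $u_k$ and its one-sided derivatives by transversality and the implicit function theorem. It then obtains piecewise $C^1$ convergence $\boldsymbol\ell_k\to\boldsymbol\ell$ and concludes from $\boldsymbol\ell_k\times\boldsymbol\ell_k'$ that the regular envelope points converge uniformly while every transition segment has diameter $O(\delta_k)$.

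The support-function and $\theta$-reparametrisation layer you add is unnecessary once your Step~2 is done, since the cross-product argument already works in the parameter $t$. Your explicit remark that the third coordinate of $\boldsymbol\ell\times\boldsymbol\ell'$ is bounded away from zero is a slightly sharper justification of affine convergence than the paper's statement that $\boldsymbol\ell\times\boldsymbol\ell'$ is uniformly non-zero.
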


\begin{proof}
Use the~parametrisations from Lemma~\ref{lem:paired-tangent-estimate}.  Since $p$ has positive distance from $C$, it belongs to $\Int(P_k)$ for all sufficiently large $k$. Let $u(t)$ denote the~other endpoint parameter of the~chord through $p$ and $\gamma(t)$, and let $u_k(t)$ be the~corresponding parameter on $\partial P_k$.  They are defined by the~non-diagonal branches of
$[\gamma(t)-p,\gamma(u)-p]=0$ and 
$[r_k(t)-p,r_k(u_k)-p]=0$.
At the~second endpoint,
$[\gamma(t)-p,\gamma'(u(t))]\neq0$,
because every chord through an~interior point meets an~oval transversely.  Compactness gives a~uniform lower bound for the~absolute value of this derivative.  The~implicit function theorem and \eqref{eq:paired-c1} therefore yield
$u_k\rightarrow u$
uniformly, together with uniform convergence of both one-sided first derivatives.

Consequently, homogeneous coefficients of the~reflected lines satisfy
$$
  \boldsymbol\ell_k(t)
  =
  \widehat{r_k(t+\uppi)}\times\widehat{r_k(u_k(t)+\uppi)}
  \rightarrow
  \widehat{\gamma(t+\uppi)}\times\widehat{\gamma(u(t)+\uppi)}
  =
  \boldsymbol\ell(t)
$$
uniformly in the~piecewise $C^1$ sense, including both one-sided derivatives at every cell boundary.

The~envelope points are $[\boldsymbol\ell_k\times\boldsymbol\ell_k']$ and $[\boldsymbol\ell\times\boldsymbol\ell']$.  The~smooth dual curve $[\boldsymbol\ell(t)]$ is a~regular embedding, so $\boldsymbol\ell\times\boldsymbol\ell'$ is uniformly non-zero after a~continuous normalisation of the~lift.  It follows that the~regular cellwise envelope points converge uniformly to the~smooth envelope.

At a~cell boundary, both one-sided derivatives of $\boldsymbol\ell_k$ converge to $\boldsymbol\ell'$.  Hence the~two one-sided envelope points have distance $O(\delta_k)$ from one another and converge to the~same smooth envelope point.  Every completing segment therefore has diameter $O(\delta_k)$.  Conversely, each smooth parameter value is within $O(\delta_k)$ of a~polygonal cell interior or of one of its completing segments.  These two estimates give both Hausdorff inclusions and prove the~theorem.
\end{proof}

\begin{remark}
The~quasi-uniformity assumption is a~convenient sufficient condition for the~$C^1$ control used in the~proof.  It can be replaced by any shape-regularity hypothesis ensuring uniform first-order convergence of the~paired tangent polygons.  Hausdorff convergence of $P_k$ alone is not sufficient for convergence of envelopes, because taking an~envelope is a~first-order operation.
\end{remark}

\section{Extremal bounds for the~normalised ART energy}
\label{sec:extremal-art-energy}

\noindent The~affine invariance of $\delta_{\ART}$ naturally leads to the~extremal
problem of estimating the~largest possible normalised $\ART$ energy.
In this section we obtain a~universal bound, valid in both the~smooth
and polygonal settings, and a~stronger estimate for ovals of constant
width.  We also exhibit a~family of $\CPPOS$ hexagons showing that any
universal upper bound must be at least $1$.

For a~planar convex body $K$, let $D(K)$ and $R(K)$ denote its Euclidean
diameter and circumradius, respectively.

\begin{lemma}
\label{lem:art-energy-circumradius}
Let $C$ be an~oval or a~CPPOS, and let $K_C=\overline{\Int(C)}$. Then, for every
$p\in\Int(C)$,
\begin{equation}\label{eq:art-circumradius-bound}
  \mathcal E_C(p)
  \leq
  \frac{\uppi}{2}R(K_C)^2.
\end{equation}
\end{lemma}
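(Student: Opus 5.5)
The plan is to bound the energy by the squared distance of the envelope from the circumcentre of $K_C$, after moving the origin of the support representation to that circumcentre. The energy does not see this move, and the resulting quantity is at most $R(K_C)^2$ pointwise.

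First I would record the representation of the energy. In the smooth case Proposition~\ref{prop:smooth-art-area} gives
$$
2\mathcal E_C(p)=\int_0^\uppi\bigl((q_p')^2-q_p^2\bigr)\,\dd\theta .
$$
For a CPPOS, Theorem~\ref{thm:polygonal-art-area} gives the same formula. There $q_p$ is continuous, anti-periodic and piecewise smooth, hence Lipschitz, and $q_p'$ is defined almost everywhere. Let $o$ be the circumcentre of $K_C$ and put
$$
q_o(\theta)=q_p(\theta)+(p-o)\cdot n(\theta),
$$
which is the support function of the same line family written relative to $o$.

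The key step is that the bilinear form
$$
B(f,g)=\int_0^\uppi(f'g'-fg)\,\dd\theta
$$
satisfies $B(f,g)=0$ whenever $f$ is a first harmonic and $g$ is anti-periodic and Lipschitz. To see this, integrate $f'g'$ by parts; the boundary term vanishes by anti-periodicity of $f'g$, and this needs only absolute continuity of $g$. What remains is $-\int(f''+f)g\,\dd\theta=0$. Since $(p-o)\cdot n$ is a first harmonic, it follows that
$$
2\mathcal E_C(p)=B(q_p,q_p)=B(q_o,q_o).
$$
This is the same kernel observation already used in the proof of Theorem~\ref{thm:smooth-boundary-degeneration}, and it also covers the polygonal case.

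Next I would write
$$
B(q_o,q_o)\le\int_0^\uppi\bigl(q_o^2+(q_o')^2\bigr)\,\dd\theta .
$$
By the support-envelope formula~\eqref{eq:support-envelope} taken relative to $o$, we have $X(\theta)=o+q_on+q_o'\tau$ at every regular $\theta$, so the integrand equals $|X(\theta)-o|^2$. In the polygonal case this holds on each open cell, which is almost everywhere. The containment $\ART_C(p)\subset K_C$ from Proposition~\ref{prop:direction-containment} in the smooth case, and from Theorem~\ref{thm:discrete-properties}(d) for a CPPOS, gives $|X-o|\le R(K_C)$. Integrating over an interval of length $\uppi$ yields $2\mathcal E_C(p)\le\uppi R(K_C)^2$, which is \eqref{eq:art-circumradius-bound}.

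The main point needing care is the polygonal regularity: the integration by parts in the kernel identity has to be justified for a merely Lipschitz anti-periodic $q_p$. That reduces to the fundamental theorem of calculus for absolutely continuous functions, so I expect it to go through. Everything else follows from results stated earlier.
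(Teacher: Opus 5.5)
Your proposal is correct and follows essentially the same route as the paper's proof. You move the support representation to the circumcentre $o$ and use that first harmonics lie in the kernel of the quadratic form $f\mapsto\int_0^\uppi\bigl((f')^2-f^2\bigr)\,\dd\theta$. You then bound $\int_0^\uppi\bigl((q_o')^2-q_o^2\bigr)\,\dd\theta$ by $\int_0^\uppi|X-o|^2\,\dd\theta\le\uppi R(K_C)^2$, using the containment of the ART in $K_C$. Your explicit integration-by-parts justification of the kernel identity for a Lipschitz anti-periodic $q_p$ in the $\CPPOS$ case is a useful detail that the paper only asserts.
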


\begin{proof}
We give the~argument simultaneously in the~two settings.
Put $K=K_C$, let $o$ be the~centre of a~smallest Euclidean disc
containing $K$, and write the~reflected lines in the~form
$n(\theta)\cdot(z-o)=s_p(\theta)$.
Changing the~origin from $p$ to $o$ adds a~first harmonic to the~support
function.  Since first harmonics lie in the~kernel of the~quadratic form
$$
  f\mapsto
  \int_0^\uppi\bigl((f')^2-f^2\bigr)\,\dd\theta,
$$
the~area formula of Proposition~\ref{prop:smooth-art-area}, respectively
Theorem~\ref{thm:polygonal-art-area}, may be written using $s_p$.

At every regular point of the~envelope,
$X_p(\theta)-o=s_p(\theta)n(\theta)+s_p'(\theta)\tau(\theta)$,
and therefore
\begin{equation}\label{eq:support-point-radius}
  s_p(\theta)^2+s_p'(\theta)^2
  =
  |X_p(\theta)-o|^2.
\end{equation}
By Proposition~\ref{prop:direction-containment} in the~smooth case and
Theorem~\ref{thm:discrete-properties}(d) in the~polygonal case, the~ART
is contained in the~original convex body.  Hence
$s_p(\theta)^2+s_p'(\theta)^2\leq R(K)^2$
at every regular parameter value.

Consequently,
\begin{align*}
  2\mathcal E_C(p)
  =
  \int_0^\uppi
  \bigl(s_p'(\theta)^2-s_p(\theta)^2\bigr)\,\dd\theta
  \leq
  \int_0^\uppi
  \bigl(s_p'(\theta)^2+s_p(\theta)^2\bigr)\,\dd\theta
  \leq
  \uppi R(K)^2.
\end{align*}
For a~$\CPPOS$ the~integrals are understood cellwise.  The~transition
segments cause no additional term, since their oriented-area
contributions are exactly those already accounted for in
Theorem~\ref{thm:polygonal-art-area}.
\end{proof}

\begin{theorem}
\label{thm:universal-art-energy-bound}
For every oval or $\CPPOS$ $C$,
\begin{equation}\label{eq:universal-art-energy-bound}
  \delta_{\ART}(C)
  \leq
  \frac{2\uppi}{3\sqrt3}\simeq 1.20920.
\end{equation}
\end{theorem}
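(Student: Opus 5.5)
The plan is to combine Lemma~\ref{lem:art-energy-circumradius} with an affine normalisation. By Theorem~\ref{thm:smooth-energy-properties}(b) and Proposition~\ref{prop:art-invariant-properties}, $\delta_{\ART}$ is affine invariant, whereas the circumradius is not. Hence for every nonsingular affine map $F$ we have
$$
  \delta_{\ART}(C)=\delta_{\ART}(F(C))
  \leq
  \frac{\uppi}{2}\,\frac{R(F(K_C))^2}{A(F(K_C))},
$$
by applying Lemma~\ref{lem:art-energy-circumradius} to $F(C)$; the maximiser lies in $\overline{\Int}$ and boundary values vanish, so the bound at interior points suffices. It therefore remains to show
$$
  \inf_F\frac{R(F(K))^2}{A(F(K))}\leq\frac{4}{3\sqrt3}
$$
for every planar convex body $K$. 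Multiplying by $\uppi/2$ then gives exactly $\frac{2\uppi}{3\sqrt3}$.

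For this geometric inequality I would use the John (L\"owner) ellipse. Let $E$ be the minimal-area ellipse containing $K$, and choose $F$ mapping $E$ to a disc of radius $R$. Then $R(F(K))\leq R$, since $F(K)$ lies in that disc. The remaining claim is that a convex body whose minimal enclosing ellipse is the disc of radius $R$ has area at least that of the inscribed equilateral triangle, namely $\frac{3\sqrt3}{4}R^2$. This is a known extremal fact (Behrend's theorem): among convex bodies with a given L\"owner ellipse, triangles minimise area, because a triangle's L\"owner ellipse is its Steiner circumellipse, of area ratio $\frac{4\uppi}{3\sqrt3}$.

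Assuming this, I would argue as follows. The minimal-area triangle-to-L\"owner ratio gives $A(F(K))\geq\frac{3\sqrt3}{4\uppi}\cdot\uppi R^2$. Combined with $R(F(K))\leq R$, this yields the desired bound $R(F(K))^2/A(F(K))\leq\frac{4}{3\sqrt3}$.

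I expect the main obstacle to be justifying this area inequality. If I prefer not to cite Behrend, a more elementary route is available. Choose a maximal-area triangle $T$ inscribed in $K$ and affinely normalise $T$ to be equilateral. Maximality forces $K$ to lie in the anticomplementary triangle, but this gives a circumradius bound that is too weak, so this route needs more care. I would therefore cite the L\"owner--Behrend result, or John's contact-point characterisation: the contact points with the circle cannot all lie in an open semicircle, so $K$ contains the convex hull of at least three such points, which contains a triangle inscribed in the circle with the centre in its closure. The remaining step would then be to bound the area of such a triangle from below, which is the delicate point. A cruder argument via the centroid would lose the sharp constant, so the L\"owner route is the one to pursue. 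The argument is identical for ovals and $\CPPOS$, since only convexity of $K_C$ and Lemma~\ref{lem:art-energy-circumradius} are used.
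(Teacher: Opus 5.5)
Your proof is correct, but it rests on a different key inequality than the paper's. Both arguments start from Lemma~\ref{lem:art-energy-circumradius} and affine invariance. The paper places $K_C$ in Behrend position and chains two sharp inequalities: Jung's theorem $R(K)\leq D(K)/\sqrt3$ and Behrend's reverse isodiametric inequality $A(K)\geq\frac{\sqrt3}{4}D(K)^2$. This gives $\frac{\uppi R^2}{2A}\leq\frac{\uppi D^2}{6A}\leq\frac{2\uppi}{3\sqrt3}$.

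You instead normalise by the minimal-area circumscribed (L\"owner) ellipse and use a single planar inequality, $A(L(K))\leq\frac{4\uppi}{3\sqrt3}A(K)$ with equality for triangles. This is Behrend's classical result on the smallest circumscribed ellipse. Your normalisation is the optimal one for this lemma. The disc of radius $R(F(K))$ pulls back to an enclosing ellipse of $K$, so $\min_F\uppi R(F(K))^2/A(F(K))=A(L(K))/A(K)$. You therefore actually obtain the body-dependent bound $\delta_{\ART}(C)\leq A(L(K_C))/\bigl(2A(K_C)\bigr)$. This is never larger than the paper's intermediate quantity $\uppi D^2/(6A)$ evaluated in Behrend position.

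The paper's route instead uses two very standard results, Jung's theorem being elementary. Its Jung step is also reused in Corollary~\ref{cor:constant-width-art-energy}. The final constants coincide because every inequality in both chains is sharp for the equilateral triangle.

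Two small caveats. First, your ``because'' clause only verifies the equality case, so the extremality of triangles must genuinely be cited rather than derived. Second, the John contact-point sketch would not close as written. A triangle inscribed in the circle with the centre in its closure can have arbitrarily small area, for example with vertices at angles $0$ and $\uppi\pm\eps$. One would need the full condition $\sum_i c_iu_i\otimes u_i=I$ on the contact points, not merely that they avoid an open semicircle.
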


\begin{proof}
We prove both statements at once.  By
Theorem~\ref{thm:smooth-energy-properties}(b) and
Proposition~\ref{prop:art-invariant-properties}, the~normalised ART
energy is invariant under nonsingular affine transformations.

Choose an~affine image of $C$ for which $K=K_C$ is in Behrend
position.  The~planar reverse isodiametric inequality of
Behrend gives
\begin{equation}\label{eq:behrend-used}
  A(K)
  \geq
  \frac{\sqrt3}{4}D(K)^2
\end{equation}
(see \cite{Behrend1937,CaneteGonzalez}).  On the~other hand, Jung's
theorem gives
\begin{equation}\label{eq:jung-used}
  R(K)
  \leq
  \frac{D(K)}{\sqrt3}
\end{equation}
(see, for example, \cite{BogoselCheeger, HenkJung}).

Lemma~\ref{lem:art-energy-circumradius} therefore yields
$$
  \delta_{\ART}(C)
  \leq
  \frac{\uppi R(K)^2}{2A(K)}
  \leq
  \frac{\uppi D(K)^2}{6A(K)}
  \leq
  \frac{2\uppi}{3\sqrt3}.
$$
Affine invariance completes the~proof.
\end{proof}

The~preceding estimate can be improved if one restricts the~smooth
class geometrically.

\begin{corollary}
\label{cor:constant-width-art-energy}
Let $C$ be an~oval of Euclidean constant width $w$. Then
\begin{equation}\label{eq:constant-width-art-energy}
  \delta_{\ART}(C)
  <
  \frac{\uppi}{3(\uppi-\sqrt3)}
  \simeq 0.74293.
\end{equation}
\end{corollary}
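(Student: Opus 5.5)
The plan is to reuse the chain of inequalities from Theorem~\ref{thm:universal-art-energy-bound}, but to work directly in the Euclidean position in which $C$ has constant width $w$, with no affine normalisation. In place of Behrend's inequality we will use the Blaschke--Lebesgue theorem. Set $K=K_C=\overline{\Int(C)}$. Lemma~\ref{lem:art-energy-circumradius} gives, for every $p\in\Int(C)$,
$$
  \mathcal E_C(p)\leq\frac{\uppi}{2}R(K)^2,
$$
and $\mathcal E_C$ vanishes on $C$. The same bound therefore holds for $\max_{p}\mathcal E_C(p)$, and so
$$
  \delta_{\ART}(C)\leq\frac{\uppi R(K)^2}{2A(K)}.
$$

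Next we bound the circumradius and the area in terms of $w$. A body of constant width $w$ has diameter $D(K)=w$, so Jung's theorem \eqref{eq:jung-used} gives $R(K)\leq w/\sqrt3$. The Blaschke--Lebesgue theorem says that among convex bodies of constant width $w$ the Reuleaux triangle has least area, so
$$
  A(K)\geq\frac{\uppi-\sqrt3}{2}\,w^2 .
$$
Substituting both estimates gives
$$
  \delta_{\ART}(C)\leq\frac{\uppi\,w^2/3}{(\uppi-\sqrt3)\,w^2}=\frac{\uppi}{3(\uppi-\sqrt3)} .
$$

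The main point requiring care is that the inequality is strict. The plan is to use the equality case of Blaschke--Lebesgue: equality holds only for the Reuleaux triangle. That body is not an oval in the sense of this paper, since its boundary has corners and is not smooth with positive curvature. Hence for an oval $C$ of constant width we get $A(K)>\frac{\uppi-\sqrt3}{2}w^2$ strictly. The bound $R(K)\leq w/\sqrt3$ is only used as a non-strict upper bound, so the final inequality becomes strict.

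As a backup, strictness could also be obtained from Lemma~\ref{lem:art-energy-circumradius} itself. There the term $-s_p^2$ was discarded, and $s_p$ vanishes identically only if every reflected line passes through $o$. In that case the ART is the single point $o$, the energy is zero, and the bound is trivially strict. The Blaschke--Lebesgue equality case is cleaner, however, and is the route I would take.
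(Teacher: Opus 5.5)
Your proposal is correct and matches the paper's proof: Lemma~\ref{lem:art-energy-circumradius} combined with Jung's theorem (using $D(K)=w$) gives $\max_p\mathcal E_C(p)\leq \uppi w^2/6$, and Blaschke--Lebesgue gives $A(C)\geq\frac{\uppi-\sqrt3}{2}w^2$. Strictness then comes from the equality case, since the Reuleaux triangle is not a smooth oval with positive curvature. Your backup strictness argument, which uses the discarded $-s_p^2$ term together with attainment of the maximum, is also valid but is not needed.
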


\begin{proof}
A~convex body of constant width $w$ has diameter $D(K_C)=w$.
Hence Jung's theorem gives
  $R(K_C)\leq\frac{w}{\sqrt3}$.
By Lemma~\ref{lem:art-energy-circumradius},
$$
  \max_p\mathcal E_C(p)
  \leq
  \frac{\uppi w^2}{6}.
$$
The~Blaschke--Lebesgue theorem states that every planar convex body of
constant width $w$ satisfies
$$
  A(C)
  \geq
  \frac{\uppi-\sqrt3}{2}\,w^2,
$$
with equality precisely for a~Reuleaux triangle, up to rigid motions
(see, for example, \cite{HarrellBlaschkeLebesgue}). Since an~oval has a
smooth boundary with positive curvature, equality cannot occur here.
Consequently,
$$
  \delta_{\ART}(C)
  <
  \frac{\uppi w^2/6}
       {(\uppi-\sqrt3)w^2/2}
  =
  \frac{\uppi}{3(\uppi-\sqrt3)}.
$$
\end{proof}

The~universal constant in Theorem~\ref{thm:universal-art-energy-bound}
does not appear to be sharp.  The~following family gives a~natural
lower bound for the~constant.

\begin{example}
\label{prop:unit-energy-cppos-family}
For $0<\eps<1/6$, let $P_\varepsilon$ be a $\CPPOS$ with the following vertices:
\begin{align*}
 &\left(\frac13+\eps,\sqrt3\left(\eps-\frac13\right)\right),
 \left(\frac13+\eps,\sqrt3\left(\frac13-\eps\right)\right),
 \left(\frac13-2\eps,\frac{\sqrt3}{3}\right),
 \left(\eps-\frac23,\sqrt3\eps\right),\\
 &\left(\eps-\frac23,-\sqrt3\eps\right),
 \left(\frac13-2\eps,-\frac{\sqrt3}{3}\right).
\end{align*}
Geometrically, $P_\varepsilon$ is obtained by symmetrically
truncating the~three vertices of an~equilateral triangle by lines
parallel to the~opposite sides.  Its side lengths alternate between
$2\sqrt3(1/3-\varepsilon)$ and $2\sqrt3\varepsilon$.

Thus a~direct area calculation gives
\begin{equation}\label{eq:unit-energy-family-area}
  A(P_\eps)
  =
  \sqrt3
  \left(
    \frac13+2\eps-6\eps^2
  \right)
  \rightarrow
  \frac1{\sqrt3}.
\end{equation}

At the~basepoint $p=0$, the~six cells of the~reflected-line family
split, by the~threefold rotational symmetry, into three congruent
non-degenerate cells and three degenerate cells.  We record the~limit
calculation for one non-degenerate cell.  Take the~original endpoints
on $e_4$ and $e_0$, and use the~affine parameter $s$ on $e_4$.  Its
active interval tends to $0\leq s\leq1/2$, while the~corresponding
parameter on $e_0$ tends to
$$
  t_0(s)=\frac{1-s}{2-3s}.
$$
Put $d=2-3s$.  Substitution in the~exact formula of
Theorem~\ref{thm:exact-cellwise}, followed by cancellation of the
common first-order factor in $\eps$, gives the~limiting envelope
$$
  Z_0(d)
  =
  \left(
    \frac{1-2d^2}{3(1+d^2)},
    \frac{1}{\sqrt3(1+d^2)}
  \right),
$$
traversed from $d=2$ to $d=1/2$. Since
$$
  [Z_0(d),Z_0'(d)]
  =
  \frac{4d}{3\sqrt3(1+d^2)^2},
$$
the~oriented area of this arc tends to
$$
  \frac12\int_2^{1/2}[Z_0(d),Z_0'(d)]\,\dd d
  =-\frac{\sqrt3}{15}.
$$

The~three opposite-side cells degenerate to points, so their arc
areas are $o(1)$.  At the~two ends of the~representative arc, the
adjacent degenerate-cell points tend to
$$
  Q_0=\left(-\frac23,0\right),
  \qquad
  Q_1=\left(\frac13,\frac{\sqrt3}{3}\right).
$$
Moreover,
$$
  Z_0(2)=\left(-\frac7{15},\frac{\sqrt3}{15}\right),
  \qquad
  Z_0\left(\frac12\right)
  =\left(\frac2{15},\frac{4\sqrt3}{15}\right),
$$
and the~two oriented transition segments satisfy
$$
  \frac12[Q_0,Z_0(2)]
  =
  \frac12\left[Z_0\left(\frac12\right),Q_1\right]
  =-\frac{\sqrt3}{45}.
$$
By the~threefold rotational symmetry, each of the~three
non-degenerate arcs and each of the~six transition segments has the
same respective limit.
Consequently,
$$
 A^*\bigl(\ART_{P_\eps}(0)\bigr)
 =
 -3\frac{\sqrt3}{15}
 -6\frac{\sqrt3}{45}
 +o(1)
 =
 -\frac1{\sqrt3}+o(1).
$$
Together with \eqref{eq:unit-energy-family-area}, we obtain that
$$
\lim_{\varepsilon\to 0^+}\frac{A^*\bigl(\ART_{P_\eps}(0)\bigr)}{A(P_\varepsilon)}=-1,
$$
equivalently $\delta_{\ART(P_\varepsilon)}\to 1$ as $\varepsilon\to 0^+$.
\end{example}

Theorem~\ref{thm:universal-art-energy-bound} and
Example~\ref{prop:unit-energy-cppos-family} leave the~gap
$$
  1
  \leq
  \sup_{P\ {\rm CPPOS}}\delta_{\ART}(P)
  \leq
  \frac{2\uppi}{3\sqrt3}.
$$
Numerical experiments suggest that the~lower endpoint is the~correct
one.

\begin{conjecture}
\label{conj:sharp-normalised-art-energy}
For every oval or $\CPPOS$ $C$ and every $p\in\Int(C)$,
\begin{equation*}\label{eq:conj-smooth-unit-energy}
  \mathcal E_C(p)\leq A(C).
\end{equation*}
Equivalently, $\delta_{\ART}(C)\leq1$.
The~constant $1$ cannot be replaced by any smaller universal constant, by Example~\ref{prop:unit-energy-cppos-family}.
\end{conjecture}

\section*{Declarations}

\noindent\textbf{Funding.}
The~authors received no financial support for the~research, authorship,
or publication of this article.

\noindent\textbf{Conflict of interest.}
The~authors declare that they have no competing interests.

\noindent\textbf{Data availability.}
No datasets were generated or analysed in this theoretical study.

\noindent\textbf{Software.}
The~figures were prepared and selected computations were carried out by the~authors using Wolfram Mathematica~\cite{WolframMathematica}, Maple~\cite{Maple}, and Python~\cite{Python}.

\noindent\textbf{Use of generative artificial intelligence.}
During the~preparation of this manuscript, the~authors used OpenAI's ChatGPT (GPT-5.6) as an auxiliary tool to assist with literature searches, language editing, and checking mathematical derivations and computations. All AI-assisted content was independently reviewed and verified by the~authors, who take full responsibility for the~manuscript.

\end{document}